\documentclass[a4paper,11pt]{amsart}

\textheight 220mm
\textwidth 150mm 
\hoffset -16mm

\usepackage{amssymb}
\usepackage{amstext}
\usepackage{amsmath}
\usepackage{amscd}
\usepackage{amsthm}
\usepackage{amsfonts}
\usepackage{enumerate}
\usepackage{graphicx}
\usepackage{latexsym}
\usepackage[all]{xy}
\usepackage[usenames]{color}

\newtheorem*{corollary*}{Corollary}
\newtheorem{theorem}{Theorem}[section]
\newtheorem{corollary}[theorem]{Corollary}
\newtheorem{lemma}[theorem]{Lemma}
\newtheorem{proposition}[theorem]{Proposition}
\newtheorem*{proposition*}{Proposition}

\theoremstyle{definition}
\newtheorem{definition}[theorem]{Definition}
\newtheorem{remark}[theorem]{Remark}

\newtheorem*{set-up}{Set-up}

\theoremstyle{remark}

\newtheorem{notation}[theorem]{Notation}

\numberwithin{equation}{section}
\renewcommand{\mod}{\mathsf{mod}}
\newcommand{\proj}{\mathsf{proj}}

\newcommand{\End}{\operatorname{End}}
\newcommand{\Hom}{\operatorname{Hom}}
\newcommand{\add}{\operatorname{\mathsf{add}}}
\newcommand{\Ext}{\operatorname{Ext}}

\newcommand{\rad}{\operatorname{rad}}

\newcommand{\Co}{\mathbf{Co}}

\newcommand{\Db}{\mathsf{D}^{\rm b}}
\newcommand{\Kb}{\mathsf{K}^{\rm b}}

\newcommand{\type}[1]{\mathbb{#1}}
\newcommand{\dpg}{\operatorname{DPic}}

\newcommand{\Sub}{\mathsf{Sub}\hspace{.01in}}
\newcommand{\Fac}{\mathsf{Fac}\hspace{.01in}}
\newcommand{\tilt}{\operatorname{tilt}}

\newcommand{\silt}{\operatorname{silt}}
\newcommand{\tsilt}{\operatorname{2-silt}}
\newcommand{\ttilt}{\operatorname{2-tilt}}

\newcommand{\La}{\Lambda}

\newcommand{\id}{{\rm id}}
\newcommand{\op}{{\rm op}}
\newcommand{\Out}{{\rm Out}}
\newcommand{\Aut}{{\rm Aut}}
\newcommand{\Inn}{{\rm Inn}}
\newcommand{\I}{\widetilde{I}}
\newcommand{\wLa}{\widetilde{\Lambda}}
\newcommand{\RHom}{\mathbf{R}\strut\kern-.2em\operatorname{Hom}\nolimits}
\newcommand{\REnd}{\mathbf{R}\strut\kern-.2em\operatorname{End}\nolimits}

\newcommand{\Lotimes}{\mathop{{\otimes}^\mathbf{L}_\Lambda}\nolimits}
\newcommand{\Lwotimes}{\mathop{{\otimes}^\mathbf{L}_{\widetilde{\Lambda}}}\nolimits}

\newcommand{\Tor}{\operatorname{Tor}\nolimits}

\newcommand{\bmu}{{\boldsymbol \mu}}
\setcounter{tocdepth}{1}

\begin{document}
\title[Derived Picard groups of preprojective algebras
of Dynkin type]
{Derived Picard groups of preprojective algebras
of Dynkin type}

\author{Yuya Mizuno}
\address{Department of Mathematics, Faculty of Science, Shizuoka University, 
836 Ohya, Suruga-ku, Shizuoka, 422-8529, Japan}
\email{yuya.mizuno@shizuoka.ac.jp}
\begin{abstract}
In this paper, we study two-sided tilting complexes of preprojective algebras
of Dynkin type. 
We construct the most fundamental class of two-sided tilting complexes, which has a group structure by derived tensor products and induces a group of auto-equivalences of the derived category.
We show that the group structure of the two-sided tilting complexes is isomorphic to the braid group of the corresponding folded graph. 
Moreover we show that these two-sided tilting complexes induce tilting mutation and any tilting complex is given as the derived tensor products of them. 
Using these results, we determine the derived Picard group of preprojective algebras 
for type $A$ and $D$.

\end{abstract}
\maketitle
\tableofcontents

\section{Introduction}

One of the most fundamental connection between 
the quiver representation theory and the root system is the discovery by Gabriel \cite{Ga}. 
He showed that indecomposable modules of the path algebra of a (simply-laced) Dynkin quiver $Q$ correspond to the positive roots of $\Delta$, where $\Delta$ is the underlying graph of $Q$.
Recently, it has turned out that the preprojective algebra allows us to give a stronger and more direct connection.  
Namely, the preprojective $\La_\Delta$, which unifies the path algebras of all quivers with the underlying graph $\Delta$, gives a representation-theoretical interpretation of the Weyl group of $\Delta$ \cite{IR,BIRS,M}. 
This fact leads to the extensive study of connections between representation theory of algebras and combinatorics, for example \cite{AM,AIRT,BIRS,GLS,IR,IRRT,IRTT,L,ORT}. 
In this paper, we investigate a further connection from the viewpoint of tilting theory and derived categories.

Recall that in \cite{AM} we classify all tilting complexes by relating them with the elements of braid group of the corresponding folded graph. The main results of \cite{AM} can be summarized as follows.

\begin{theorem}{\cite{AM}}\textnormal{ (see Theorem \ref{tilt-bij})}\label{main1}
Let $\Delta$ be a Dynkin graph, $\Delta^{\rm f}$ the folded graph of $\Delta$ (Definition \ref{def fold}) and $\La$ the preprojective algebra of $\Delta$. 

\begin{itemize}
\item[(a)] Let $B_{\Delta^{\rm f}}$ be the braid group of $\Delta^{\rm f}$ and $\tilt\La$ the set of isomorphism classes of basic tilting complexes of $\La$. Then we have a bijection 
$$B_{\Delta^{\rm f}}\longrightarrow\tilt\La,$$ 
$$\ \ \ \ \ \ \ \ \ \ \ a:=a_{i_1}^{\epsilon_{i_1}}\cdots a_{i_k}^{\epsilon_{i_k}}\mapsto
\bmu_{a}:=\bmu_{i_1}^{\epsilon_{i_1}}\circ\cdots \circ \bmu_{i_k}^{\epsilon_{i_k}}(\La),$$
where $\bmu_{}$ denotes by the irreducible left or right tilting mutation (see subsection \ref{some results} for the detail).

\item[(b)] Any basic tilting complex $T$ of $\Db(\mod\La)$ satisfies $\End_{\Kb(\proj\La)}(T)\cong\La$. In particular, the derived equivalence class coincides with the Morita equivalence class.
\end{itemize}
\end{theorem}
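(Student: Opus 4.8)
The plan is to establish (a) by silting-mutation theory and to read off (b) from the equivalences realising the mutations. Since $\La$ is self-injective, tilting and silting complexes in $\Kb(\proj\La)$ coincide, and I regard $\bmu_i^{\pm}$ as the irreducible left/right silting mutation attached to the vertex $i$ of $\Delta^{\rm f}$. The folding enters because $\La$ carries a Nakayama automorphism acting on the vertices of $\Delta$ by the diagram automorphism $\sigma$: the mutation attached to a vertex of $\Delta^{\rm f}$ is the product of the elementary twists over the corresponding $\sigma$-orbit, and precisely such $\sigma$-stable mutations are the ones preserving the isomorphism type of $\La$ as endomorphism algebra. The first task is to check that $a\mapsto\bmu_a(\La)$ descends to $B_{\Delta^{\rm f}}$, i.e.\ that the $\bmu_i$ satisfy the commutation relation $\bmu_i\bmu_j=\bmu_j\bmu_i$ for non-adjacent $i,j$ and the length-$m_{ij}$ braid relation for adjacent $i,j$; I would verify these by restricting to the rank-two sub-configuration on $\{i,j\}$ and computing the finitely many exchange triangles explicitly.

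For surjectivity the decisive input is that $\La$ is \emph{silting-discrete}, so that every tilting complex is reachable from $\La$ by a finite chain of irreducible mutations. I would obtain this by bootstrapping from the $2$-term case: $2$-term silting complexes correspond to support $\tau$-tilting modules (Adachi--Iyama--Reiten), and these are in bijection with the finite Weyl group $W$ of $\Delta$, their mutation graph being the Hasse quiver of the weak order. A silting-reduction argument upgrades the finiteness of each interval to global silting-discreteness, and connectivity of the silting quiver then makes $B_{\Delta^{\rm f}}\to\tilt\La$ surjective.

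Injectivity is the crux and the step I expect to be the main obstacle, since one must show the braid relations are the \emph{only} relations, with no accidental coincidence among iterated mutations. I would realise the $\bmu_i$ by standard self-equivalences of $\Db(\mod\La)$ of spherical-twist type (equivalently, derived tensoring with the Iyama--Reiten bimodules and their inverses), producing a homomorphism $B_{\Delta^{\rm f}}\to\Auteq(\Db(\mod\La))$, and then show that the stabiliser of $\La$ for the induced action on isomorphism classes of tilting complexes is trivial, i.e.\ that $\bmu_a(\La)\cong\La$ forces $a$ to be trivial. The natural way to see this is to let the group act on the $g$-vector fan of silting complexes, where the action is governed by the linear Coxeter/Tits-cone geometry, and to check that a non-trivial braid word moves the chamber of $\La$ away from itself. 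Establishing this faithfulness, in particular controlling the pure-braid elements lying over the identity of $W$, is the genuinely hard part.

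Finally, (b) drops out of this picture. Each $\bmu_a(\La)$ equals $\Phi_a(\La)$ for the self-equivalence $\Phi_a$ attached to $a$, so $\End_{\Kb(\proj\La)}(\bmu_a(\La))\cong\End_{\Kb(\proj\La)}(\La)\cong\La$ because an equivalence preserves endomorphism rings. Equivalently, one argues by induction on the braid length of $a$, using that each irreducible step mutates the endomorphism algebra at a $\sigma$-orbit and returns an algebra isomorphic to $\La$, so the isomorphism type is preserved at every stage. Since the endomorphism ring of a tilting complex is exactly the basic algebra of its derived-equivalence class, the asserted coincidence of the derived-equivalence and Morita-equivalence classes follows at once.
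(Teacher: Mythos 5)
First, note what you are being compared against: the paper does not prove this statement at all --- Theorem \ref{main1} is imported from \cite{AM} (restated as Theorem \ref{tilt-bij}, again without proof), and the rest of the paper uses it as input. So your proposal is really an attempt to reprove the main theorem of \cite{AM}, and judged on those terms it contains one false ingredient and one gap that your chosen method cannot close. The false ingredient is the opening claim that self-injectivity of $\La$ forces $\silt\La=\tilt\La$. That is a theorem about \emph{symmetric} algebras, and it fails here exactly when the Nakayama permutation $\iota$ is nontrivial (types $\type{A}_n$ with $n\geq 2$, $\type{D}_{2n+1}$, $\type{E}_6$): the single-vertex mutation $\mu_i^+(\La)$ at a vertex with $\iota(i)\neq i$ is silting but not tilting. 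This is precisely the content of Theorem \ref{tau-weyl2}, where $\tsilt\La$ is in bijection with $W_\Delta$ while $\ttilt\La$ is in bijection with the strictly smaller $W_{\Delta^{\rm f}}$. Consequently your rationale for passing to $\Delta^{\rm f}$ (``$\sigma$-stable mutations are the ones preserving the endomorphism algebra'') is not the relevant dichotomy and is never proved: the orbit mutations $\bmu_i$ are singled out because they are the irreducible \emph{tilting} mutations, single-vertex mutations leaving the class of tilting complexes altogether. The same conflation weakens your surjectivity step: finiteness of $\tsilt\La$ does not bootstrap to \emph{silting}-discreteness, since that criterion requires finiteness of two-term silting over the endomorphism algebras of all iterated silting mutations, and for non-tilting silting complexes those algebras are not $\La$; what \cite{AM} proves and uses is \emph{tilting}-discreteness, which is where self-injectivity and $\nu$-stability genuinely enter.

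Second, injectivity --- which you correctly identify as the crux --- is not merely unfinished: the route you propose cannot work. The action of $B_{\Delta^{\rm f}}$ on $K_0(\Kb(\proj\La))$ induced by mutation factors through the finite Weyl group $W_{\Delta^{\rm f}}$ (each $\bmu_i$ acts by the reflection $t_i$ on classes of projectives), so $g$-vectors, chambers and Tits-cone geometry are structurally blind to pure braid elements, which are exactly the elements you need to separate from the identity. In \cite{AM} injectivity is instead deduced from tilting-discreteness together with the two-term bijection with $W_{\Delta^{\rm f}}$, via the partial order on $\tilt\La$ and a normal-form argument. Finally, your first argument for (b) is circular relative to the literature: realizing each $\bmu_i$ by a standard auto-equivalence, i.e. a two-sided tilting complex, is the main theorem of the \emph{present} paper (Theorems \ref{main2} and \ref{main3}), whose proof takes Theorem \ref{main1} as input; moreover $-\Lotimes I_i$ over $\La$ itself is not an equivalence --- the ideals become tilting only over the completed affine algebra $\wLa$, and, as the introduction stresses, $\La$ has no spherical objects. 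Your alternative induction for (b) has the right shape (it is essentially what \cite{AM} does), but it silently assumes the base case $\End_{\Kb(\proj\La)}(\bmu_i^{\pm}(\La))\cong\La$, and that explicit computation --- carried out in \cite{AM} by passing to $\wLa$ and the ideals $\I_{t_i}$ --- is exactly the nontrivial content you would need to supply.
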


Thus, the result allows us to give an explicit description of tilting complexes by mutation. 
Moreover this result implies that the set of tilting complexes gives a group structure in terms of mutation. 
However, this description of tilting complexes does not allow the derived tensor product $\Lotimes$ since we do not know the bimodule structure. 
To define the natural multiplication by $\Lotimes$, 
we will consider the notion of \emph{two-sided tilting complexes} \cite{Ric2} (Definition \ref{def two-sided}). 
Recall that a two-sided tilting complex is given by the complex $X\in\Db(\mod \La^{\rm e})$ such that 
$-\Lotimes X:\Db(\mod\La)\to\Db(\mod\La)$ gives an equivalence and 
they are closed under multiplications of $\Lotimes$.
One of the main results in this paper is to give a construction of a fundamental class of two-sided tilting complexes. 
To explain our results, we give the following set-up. 

Let $\widetilde{\Delta}$ be an extended Dynkin graph obtained from $\Delta$ and $\wLa$ the preprojective algebra of $\widetilde{\Delta}$. 
Let $\I_i:=\wLa(1-e_i)\wLa$, where $e_i$ is the primitive idempotent of $\wLa$ associated with a vertex $i\in {\Delta}_0$.
Then we can define $\I_w =\I_{i_1}\I_{i_2}\cdots \I_{i_k}$ for any reduced 
expression $w=s_{i_1}\cdots s_{i_k}$ of the Weyl group $W_\Delta$ (see subsection \ref{some results}), which turns out to be a tilting $\wLa$-module by \cite{IR,BIRS}. Using this terminology, we can give a two-sided tilting complex of $\La$ as follows.

\begin{theorem}\label{main2}\textnormal{(Proposition \ref{two-sided})} 
Let $\Delta^{\rm f}$ be the folded graph of $\Delta$.  
For any $i\in \Delta_0^{\rm f}$, define a reduced 
expression $t_i\in W_\Delta$ as follows

\[
t_i:=\left\{\begin{array}{ll}
\ s_i & \mbox{if  $i=\iota(i)$ in $\Delta$},\\
\ s_is_{\iota(i)}s_i & \mbox{if there is an edge $i\stackrel{ }{\mbox{---}}\iota(i)$ in $\Delta$},\\
\ s_is_{\iota(i)} & \mbox{if no edge between $i$ and $\iota(i)$ in $\Delta$},\\
\end{array}\right.\] 
where $\iota$ is the Nakayama permutation of $\Lambda$ (subsection \ref{relation}). 
Then
$$T_i:=\sigma^{\geq -1}(\La\Lwotimes\I_{t_i}\Lwotimes\La)\ \in\Db(\mod \La^{\rm e})$$  
is a two-sided tilting complex.
\end{theorem}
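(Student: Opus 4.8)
The plan is to deduce the statement from Rickard's Morita theory for derived categories together with Theorem~\ref{main1}. Recall that a bounded complex $X$ of $\La$-bimodules is a two-sided tilting complex if and only if, viewed as a complex of right $\La$-modules, it is a tilting complex in $\Kb(\proj\La)$, and the canonical morphism $\La\to\RHom_{\La}(X,X)$ induced by the left $\La$-action is an isomorphism in $\Db(\mod\La^{\rm e})$. The point of this reformulation is that Theorem~\ref{main1}(b) controls the second condition essentially for free: as soon as the right restriction of $T_i$ is known to be a tilting complex it has no higher self-extensions and its endomorphism algebra is $\La$, so $\RHom_{\La}(T_i,T_i)$ is concentrated in degree $0$ and equals $\La$, and it then remains only to check that the left action realizes this identification (a faithfulness/dimension point). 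Thus the whole statement reduces to showing that the one-sided restriction of $T_i$ is a genuine tilting complex of $\La$.

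To analyse that restriction I would first record the homological input on the extended side: since $\widetilde{\Delta}$ is non-Dynkin, $\wLa$ has global dimension $2$, the algebra $\La=\wLa/\wLa e_0\wLa$ is the quotient deleting the extending vertex $0$ (subsection \ref{relation}), and $\I_{t_i}$, being the product $\I_{j_1}\cdots\I_{j_\ell}$ with $\ell\in\{1,2,3\}$ along a reduced expression of $t_i\in W_\Delta$, is a tilting $\wLa$-bimodule, so $-\Lwotimes\I_{t_i}$ is an autoequivalence of $\Db(\mod\wLa)$ by \cite{IR,BIRS}. Before truncation, the functor $-\Lotimes_\La(\La\Lwotimes\I_{t_i}\Lwotimes\La)$ is the composite: inflate a right $\La$-module to $\wLa$ along $\pi\colon\wLa\twoheadrightarrow\La$, apply $-\Lwotimes\I_{t_i}$, and tensor down by $-\Lwotimes\La$. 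In particular the right restriction of $T_i$ is $\sigma^{\geq-1}(\La\Lwotimes\I_{t_i}\Lwotimes\La)$, whose cohomology lies in non-positive degrees and is bounded below because of the two base-changes along $\pi$; the role of $\sigma^{\geq-1}$ is to discard the lowest-degree cohomology, which is the spurious contribution supported at the deleted vertex $0$.

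I would then compute this cohomology explicitly from the standard bimodule presentations $0\to\I_j\to\wLa\to\wLa/\I_j\to0$ and their tensor powers, showing that after $\sigma^{\geq-1}$ only a two-term complex in degrees $-1$ and $0$ of projective $\La$-bimodules survives, and that its right restriction is exactly the tilting complex obtained from $\La$ by the irreducible mutation $\bmu_i$ at the folded vertex $i$. The three cases in the definition of $t_i$---a vertex fixed by $\iota$, an edge joining $i$ and $\iota(i)$, and a non-adjacent pair $\{i,\iota(i)\}$---correspond precisely to the three local shapes of the folding $\Delta\to\Delta^{\rm f}$, and in each case tensoring by $\I_{t_i}$ and restricting to $\La$ enacts the mutation at $i$ over the whole $\iota$-orbit at once. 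Since $\bmu_i(\La)$ is a basic tilting complex by Theorem~\ref{main1}, the one-sided restriction of $T_i$ is a tilting complex; the same computation on the other side shows the left restriction is tilting as well, so $T_i$ is balanced, and with the first paragraph this yields the claim.

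The main obstacle is the degree bookkeeping in the third step. One must show that the cohomology of $\La\Lwotimes\I_{t_i}\Lwotimes\La$ vanishes outside the predicted range, that $\sigma^{\geq-1}$ is compatible with the $\La$-bimodule structure (so that $T_i$ really is a complex of bimodules and not merely of one-sided modules), and that truncation commutes with one-sided restriction, so that the restriction of $T_i$ is the truncation of the restricted derived tensor product. The delicate point is precisely that the part killed by $\sigma^{\geq-1}$ is the spurious vertex-$0$ contribution and that discarding it does not destroy invertibility---equivalently, that the truncated complex still generates $\Kb(\proj\La)$ and retains endomorphism algebra $\La$. Once these $\Tor$-computations and the compatibility of truncation with the bimodule structure are in hand, the identification with mutation and the appeal to Theorem~\ref{main1}(b) finish the proof.
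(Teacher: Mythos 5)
Your treatment of the one-sided statement is essentially the paper's own route: compute the cohomology of $B_{t_i}=\La\Lwotimes\I_{t_i}\Lwotimes\La$, observe that $\sigma^{\geq -1}$ discards exactly the contribution of the ideal $\wLa e_0\wLa$, and identify the right restriction of $T_i$ with $\bmu_i^+(\La)$, which is tilting; this is Lemmas \ref{homology}, \ref{homology2}, Proposition \ref{quasi-iso} and Lemma \ref{two-sided lemma} combined with Theorem \ref{tau-weyl2}. (One slip here: $T_i$ cannot be ``a two-term complex of projective $\La$-bimodules''. If $T_i$ were perfect over $\La^{\rm e}$, invertibility would make $\La$ itself perfect over $\La^{\rm e}$, which is impossible since a non-semisimple selfinjective algebra has infinite global dimension; only biperfectness, i.e.\ perfection on each side separately, holds.)

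The genuine gap is your handling of the bimodule condition. Definition \ref{def two-sided}(i) does not ask for an abstract isomorphism $\RHom_\La(T_i,T_i)\cong\La$; it asks that the \emph{specific} morphism $\La\to\RHom_\La(T_i,T_i)$ given by the left action be an isomorphism in $\Db(\mod\La^{\rm e})$. Theorem \ref{main1}(b) gives the abstract isomorphism, but it says nothing about the left action: take $\phi\colon\La\twoheadrightarrow\La/\rad\La\hookrightarrow\La$ (a non-injective algebra endomorphism, which exists by Wedderburn--Malcev); then ${}_\phi\La$ has right restriction $\La$, a tilting complex with endomorphism algebra $\La$, yet its left-multiplication map is $\phi$, not an isomorphism. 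So the ``faithfulness/dimension point'' you defer is precisely the content that must be proved, and it depends on the actual bimodule structure of $T_i$. Your fallback — both one-sided restrictions are tilting, hence $T_i$ is ``balanced'', hence done — is not one of the equivalent conditions of Definition \ref{def two-sided} and is false in general (over $K\times K$, a bimodule $M$ with $e_1Me_1=K^2$, $e_2Me_2=K$, $e_1Me_2=e_2Me_1=0$ restricts to a tilting module on both sides but is not invertible). The proof of Proposition \ref{two-sided} spends essentially all of its effort on exactly this point: it uses the isomorphism $\I_{t_i}\Lwotimes\La\cong\La\Lwotimes\I_{t_i}$ in $\Db(\mod(\wLa^{\op}\otimes_K\La))$ from \cite[Lemma 5.3, Proposition 5.4]{AM} — this is where the $\iota$-invariance of $t_i$ enters at the bimodule level — together with the fact that left multiplication gives $\wLa\cong\RHom_{\wLa}(\I_{t_i},\I_{t_i})$ by \cite{BIRS}, and adjunction along $\wLa\to\La$, to obtain a chain of isomorphisms
$$\La\cong\La\Lwotimes\RHom_{\wLa}(\I_{t_i},\I_{t_i})\cong\RHom_{\wLa}(\I_{t_i},\La\Lwotimes\I_{t_i})\cong\RHom_{\wLa}(\I_{t_i},\I_{t_i}\Lwotimes\La)\cong\RHom_{\La}(T_i,T_i)$$
which is, by construction, the left-multiplication map. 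Nothing in your proposal plays the role of this commutation isomorphism, so the key step of the theorem remains unproved.
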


Moreover we can show that the set $\{T_i\ |\ i\in \Delta_0^{\rm f}\}$ is the \emph{most fundamental} two-sided tilting complexes from the viewpoint of the theorem below.
We denote by $\langle {T}_i\ |\ i\in \Delta_0^{\rm f} \rangle$ the set of 
two-sided tilting complexes of $\La^{\rm e}$ which can be written as 
$$T_{i_1}^{\epsilon_{i_1}}\Lotimes T_{i_2}^{\epsilon_{i_2}}\Lotimes\cdots \Lotimes T_{i_k}^{\epsilon_{i_k}}\in\Db(\mod\La^{\rm e}),$$
where $i_1,\ldots, i_k\in \Delta_0^{\rm f}$ and $\epsilon_{i_j}\in\{\pm 1\}$.

Then we will obtain the following result.

\begin{theorem}\label{main3}\textnormal{(Theorem \ref{group surj})}
There is a group isomorphism
$$B_{\Delta^{\rm f}}\to\langle {T}_i\ |\ i\in \Delta_0^{\rm f} \rangle,$$
$$\ \ \ \ \ \ \ \ \ a:=a_{i_1}^{\epsilon_{i_1}}\cdots a_{i_k}^{\epsilon_{i_k}}\mapsto T_a:=T_{i_1}^{\epsilon_{i_1}}\Lotimes T_{i_2}^{\epsilon_{i_2}}\Lotimes\cdots \Lotimes T_{i_k}^{\epsilon_{i_k}},$$
and we have an isomorphism in $\Db(\mod \La)$
$$T_a\cong\bmu_{a}(\La).$$ 
In particular, the restriction of $\langle {T}_i\ |\ i\in \Delta_0^{\rm f} \rangle$ to $\Db(\mod \La)$ gives a bijection between $B_{\Delta^{\rm f}}$ and $\tilt\La$ from Theorem \ref{main1}.
\end{theorem}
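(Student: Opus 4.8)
The plan is to prove the statement in three stages: first that $a\mapsto T_a$ is a well-defined group homomorphism (the braid-relation check), then that its composition with restriction to $\Db(\mod\La)$ recovers the mutation map of Theorem~\ref{main1}, and finally to read off bijectivity from this identification. The bridge throughout is the behaviour of the ideals $\I_w$ on $\wLa$: by \cite{IR,BIRS} the tilting module $\I_w$ depends only on $w\in W_\Delta$ and not on the chosen reduced expression, and it satisfies $\I_w\Lwotimes\I_{w'}\cong\I_{ww'}$ whenever $\ell(ww')=\ell(w)+\ell(w')$. The elements $t_i$ are defined precisely so that $a_i\mapsto t_i$ embeds the folded Weyl group $W_{\Delta^{\rm f}}$ as the reflection subgroup $\langle t_i\rangle\subseteq W_\Delta$; consequently each defining braid relation $\underbrace{a_ia_j\cdots}_{m_{ij}}=\underbrace{a_ja_i\cdots}_{m_{ij}}$ of $B_{\Delta^{\rm f}}$ maps to an equality of two reduced expressions $\underbrace{t_it_j\cdots}_{m_{ij}}=\underbrace{t_jt_i\cdots}_{m_{ij}}$ of one element of $W_\Delta$.

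The key computation is a bimodule multiplication formula: whenever the concatenation $t_{i_1}t_{i_2}\cdots t_{i_l}$ is reduced in $W_\Delta$, one has
$$T_{i_1}\Lotimes\cdots\Lotimes T_{i_l}\ \cong\ \sigma^{\geq-1}\bigl(\La\Lwotimes\I_{t_{i_1}\cdots t_{i_l}}\Lwotimes\La\bigr)\quad\text{in }\Db(\mod\La^{\rm e}).$$
To prove it I would expand the left-hand side: at each join the two adjacent copies of $\La$ merge through $\La\Lotimes_\La\La\cong\La$, so that $T_{i_1}\Lotimes\cdots\Lotimes T_{i_l}$ becomes $\La\Lwotimes\I_{t_{i_1}}\Lwotimes\La\Lwotimes\cdots\Lwotimes\La\Lwotimes\I_{t_{i_l}}\Lwotimes\La$, with one interior copy of $\La$ (now regarded as a $\wLa$-bimodule) inserted between consecutive ideals. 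Using $\I_{t_{i_1}}\Lwotimes\cdots\Lwotimes\I_{t_{i_l}}\cong\I_{t_{i_1}\cdots t_{i_l}}$, the target differs from this only by those interior factors. The technical heart is then to justify commuting $\sigma^{\geq-1}$ past the derived tensor products and to show that the unwanted higher $\Tor$-terms produced by the interior copies of $\La$—equivalently, by the failure of $\La$ to be flat over $\wLa$—sit strictly below degree $-1$, so that they are annihilated by the truncation; this is where the concrete structure of $\wLa$ and of the tilting modules $\I_{t_i}$ enters. Granting the formula, the braid relations for the $T_i$ are immediate: since $\underbrace{t_it_j\cdots}_{m_{ij}}=\underbrace{t_jt_i\cdots}_{m_{ij}}$ and $\I_w$ depends only on $w$, both bracketings yield the same bimodule, so $\underbrace{T_i\Lotimes T_j\Lotimes\cdots}_{m_{ij}}\cong\underbrace{T_j\Lotimes T_i\Lotimes\cdots}_{m_{ij}}$. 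Hence $a\mapsto T_a$ is a well-defined homomorphism, and it is surjective by the very definition of $\langle T_i\mid i\in\Delta_0^{\rm f}\rangle$.

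Next I would identify the restriction to $\Db(\mod\La)$ with mutation. Applying $\La\Lotimes_\La-$ to $T_a$ collapses the leftmost tensor factor and leaves a one-sided complex built from the $\I_{t_i}$. For a single generator this is $\bmu_i(\La)$ by the construction underlying Theorem~\ref{main2}, and by induction on the length of a reduced word I would show that $-\Lotimes T_i$ realizes the irreducible mutation $\bmu_i$ on each tilting complex $\bmu_b(\La)$; here reducedness forces lengths to add, so the intermediate products remain the genuine tilting modules $\I_{t_{i_1}\cdots t_{i_k}}$ and no higher cohomology obstructs the inductive step. With the conventions of Theorem~\ref{main1} this yields $T_a|_{\Db(\mod\La)}\cong\bmu_a(\La)$ for every $a\in B_{\Delta^{\rm f}}$.

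Bijectivity is then formal. The restriction functor sends $T_a$ to $\bmu_a(\La)$, so the composite $B_{\Delta^{\rm f}}\to\langle T_i\rangle\to\tilt\La$ is precisely the bijection $a\mapsto\bmu_a(\La)$ of Theorem~\ref{main1}(a). An injective composite forces $a\mapsto T_a$ to be injective, and together with surjectivity this makes it a group isomorphism; the same identification is exactly the ``in particular'' assertion. I expect the multiplication formula to be the main obstacle—specifically, controlling the degrees of the $\Tor$-terms coming from the interior copies of $\La$ and justifying that $\sigma^{\geq-1}$ commutes with the derived tensor products—after which the Weyl-group combinatorics and the appeal to Theorem~\ref{main1} are routine.
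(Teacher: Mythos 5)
Your overall architecture coincides with the paper's: a multiplication formula for products along reduced expressions (the paper's Proposition \ref{braid condition1}), braid relations and hence a group homomorphism deduced from it, identification of the restriction to $\Db(\mod\La)$ with $\bmu_a(\La)$, and bijectivity read off from Theorem \ref{tilt-bij}. The genuine gap is in the step you yourself flag as the technical heart. Writing $B_w:=\La\Lwotimes\I_w\Lwotimes\La$ and $T_w:=\sigma^{\geq-1}(B_w)$, your proposed mechanism --- that the unwanted $\Tor$-terms ``sit strictly below degree $-1$, so that they are annihilated by the truncation'' --- cannot establish the formula, because on the left-hand side $T_{i_1}\Lotimes\cdots\Lotimes T_{i_l}$ no truncation is applied after tensoring, so there is nothing available to annihilate anything: one must \emph{prove} that this product has no cohomology in degree $-2$. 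Truncation does dispose of the junk created by the interior copies of $\La$; that is exactly the paper's Proposition \ref{quasi-iso}, which gives $\sigma^{\geq-1}(B_v)\cong\I_v\Lwotimes\La$ as complexes of $(\wLa^{\op}\otimes_K\La)$-modules and hence $B_w\Lotimes T_v\cong B_{wv}$ when lengths add. But the remaining comparison of $T_w\Lotimes T_v$ with $B_w\Lotimes T_v$ is not a formal degree count: $H^{-2}(T_w\Lotimes T_v)$ receives a priori nonzero contributions, for instance ${}_1(\La/I_w)_\nu\otimes_\La{}_1(\La/I_v)_\nu$ coming from the degree $-1$ cohomologies, together with higher $\Tor$ of the degree $0$ parts. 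In the paper this vanishing is extracted from the long exact sequence of the triangle $\sigma^{\leq-2}(B_w)\Lotimes T_v\to B_w\Lotimes T_v\to T_w\Lotimes T_v$: one computes $H^{-2}(\sigma^{\leq-2}(B_w)\Lotimes T_v)\cong{}_1(I_w)_\nu\otimes_\La I_v\cong{}_1(I_{wv})_\nu$ (length additivity of the ideals) and $H^{-2}(B_{wv})\cong{}_1(I_{wv})_\nu$ (Lemma \ref{homology2}, whose proof needs 2-CY duality and the torsion pair $(\Fac I_w,\Sub(\La/I_w))$), and exactness makes the induced map between these two isomorphic finite-dimensional modules injective, hence bijective, forcing $H^{-2}(T_w\Lotimes T_v)=0$. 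In other words the degree $-2$ junk \emph{cancels} against $H^{-2}(B_{wv})$; it is not truncated away. Without this argument your multiplication formula, and with it the braid relations and the well-definedness of $a\mapsto T_a$, remains unproved.

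A smaller but real problem is your induction for $T_a\cong\bmu_a(\La)$: it invokes ``reducedness forces lengths to add'', but a general element of $B_{\Delta^{\rm f}}$ is a word in the $a_i^{\pm1}$ to which no length additivity applies (already $a_ia_i$, or any word containing an inverse, escapes this argument). The paper handles arbitrary words differently: once Proposition \ref{two-sided} shows each $T_i$ is a two-sided tilting complex, $-\Lotimes T_i^{\pm1}$ is an equivalence of $\Db(\mod\La)$, and since mutation is preserved by equivalences one gets $T_i^{\epsilon_i}\Lotimes T_j^{\epsilon_j}\cong\bmu_i^{\epsilon_i}(\La\Lotimes T_j^{\epsilon_j})\cong\bmu_i^{\epsilon_i}\circ\bmu_j^{\epsilon_j}(\La)$, after which induction over arbitrary words goes through (Proposition \ref{operate}). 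Substituting this argument repairs that step; your final surjectivity and injectivity argument is then the same as the paper's.
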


Thus Theorem \ref{main3} establishes a direct connection between the braid group, two-sided tilting complexes and mutation. We remark that a similar categorical construction for Weyl groups has been studied, which also plays an important role in this paper (see subsection \ref{some results}).

Let us remark that the braid group action on a triangulated category has been widely studied  and the notion of spherical objects provides one of the important sources of it (for example \cite{ST,KS,RZ,Gr,GM}). 
However, there is no spherical object in $\La$ and our method can be applied in this general situation. 
We also remark that Rickard and Keller gave general methods to construct a two-sided tilting complex from a given (one-sided) tilting complex \cite{Ric2,Ke1,Ke2}. 
Nonetheless, these constructions are not very explicit so that it is not clear that these complexes satisfy the braid relations in our setting.

Moreover the family $\langle {T}_i\ |\ i\in \Delta_0^{\rm f} \rangle$ directly induces a group of auto-equivaleces of $\Db(\mod \La)$ and 
it provides a crucial step to understand the \emph{derived Picard group} (Definition \ref{def dpg}). 
The notion of the derived Picard group was introduced by Rouquier-Zimmermann \cite{RZ} and Yekutieli \cite{Y}, which is a group of auto-equivalences consisting of standard functors modulo functorial isomorphisms. This notion provides an important invariant of the derived category and it is also closely related to the Hochschild cohomology \cite{Ke3}.  
Using Theorem \ref{main3}, we determine the derived Picard group as follows.

\begin{theorem}\label{main4}\textnormal{(Theorem \ref{dpg thm})}
Let $\La$ be a preprojective algebra of type $\type{A}_n$ or $\type{D}_n$. 
There is a group isomorphism 
$$\Theta:\Out(\La)\ltimes B_{\Delta^{\rm f}}\to\dpg(\Lambda),\ (\phi,a)\mapsto {}_\phi\La\Lotimes{T}_{a}.$$
\end{theorem}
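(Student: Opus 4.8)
The plan is to exhibit $\dpg(\La)$ as a split extension whose normal subgroup is $\langle T_i\mid i\in\Delta_0^{\rm f}\rangle\cong B_{\Delta^{\rm f}}$ and whose complement is $\Out(\La)$, using the passage from a two-sided tilting complex to its underlying one-sided tilting complex as the main bookkeeping device. First I would record that $\Theta$ is well defined. For $\phi\in\Aut(\La)$ the twisted bimodule ${}_\phi\La$ is invertible, hence a two-sided tilting complex concentrated in degree $0$, and ${}_\phi\La\cong\La$ in $\Db(\mod\La^{\rm e})$ exactly when $\phi$ is inner; thus $\phi\mapsto{}_\phi\La$ induces an injection $\Out(\La)\cong\operatorname{Pic}(\La)\hookrightarrow\dpg(\La)$, using that $\La$ is basic. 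By Theorem \ref{main3}, $a\mapsto T_a$ is an injective group homomorphism $B_{\Delta^{\rm f}}\to\dpg(\La)$ with image $\langle T_i\mid i\in\Delta_0^{\rm f}\rangle$. Hence $\Theta(\phi,a)={}_\phi\La\Lotimes T_a$ genuinely lands in $\dpg(\La)$.

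Next I would verify that $\Theta$ is a homomorphism for the semidirect product. Each $\phi\in\Aut(\La)$ permutes the primitive idempotents and so induces a permutation $\bar\phi$ of $\Delta_0$ commuting with the Nakayama permutation $\iota$; therefore $\bar\phi$ descends to an automorphism of $\Delta^{\rm f}$ and of $B_{\Delta^{\rm f}}$, and this is the action defining $\Out(\La)\ltimes B_{\Delta^{\rm f}}$. The essential point is the conjugation formula ${}_\phi\La\Lotimes T_i\Lotimes{}_\phi\La^{-1}\cong T_{\bar\phi(i)}$, which I would deduce from the naturality of the construction $T_i=\sigma^{\geq -1}(\La\Lwotimes\I_{t_i}\Lwotimes\La)$ under the relabelling induced by $\phi$ (the reduced word $t_i$ being carried to $t_{\bar\phi(i)}$ precisely because $\bar\phi$ commutes with $\iota$). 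Granting this, the identity
$${}_{\phi_1}\La\Lotimes T_{a_1}\Lotimes{}_{\phi_2}\La\Lotimes T_{a_2}\cong{}_{\phi_1\phi_2}\La\Lotimes T_{(\phi_2^{-1}\cdot a_1)a_2}$$
shows that $\Theta$ respects the semidirect-product multiplication and, in particular, that $\langle T_i\mid i\in\Delta_0^{\rm f}\rangle$ is normal in $\dpg(\La)$.

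Surjectivity and injectivity I would then handle together by restriction. Given $X\in\dpg(\La)$, its underlying one-sided tilting complex $\bar X$ is basic, so by Theorem \ref{main1} there is a unique $a\in B_{\Delta^{\rm f}}$ with $\bar X\cong\bmu_a(\La)$, and by Theorem \ref{main3} this is precisely the restriction of $T_a$. Consequently $X\Lotimes T_a^{-1}$ restricts to $\La$ placed in degree $0$, which forces it to be an invertible bimodule, i.e.\ isomorphic to ${}_\phi\La$ for some $\phi$; hence $X\cong{}_\phi\La\Lotimes T_a=\Theta(\phi,a)$ and $\Theta$ is onto. For injectivity, if $\Theta(\phi,a)\cong\La$ in $\dpg(\La)$ then restricting gives $\bmu_a(\La)\cong\La$, so $a=1$ by the bijectivity in Theorem \ref{main1}; then ${}_\phi\La\cong\La$, so $\phi$ is inner and trivial in $\Out(\La)$.

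The main obstacle is the analysis of this restriction map $\dpg(\La)\to\tilt\La$: I must prove that its kernel is exactly $\operatorname{Pic}(\La)\cong\Out(\La)$, namely that a two-sided tilting complex whose underlying one-sided complex is the regular module is necessarily a twisted bimodule, and I must identify the resulting $\Out(\La)$-action on $B_{\Delta^{\rm f}}$ explicitly. This is where the restriction to types $\type{A}_n$ and $\type{D}_n$ enters, since the argument relies on the explicit description of $\Out(\La)$ and of its induced action on the folded graph that is available in these cases; establishing the conjugation formula and the splitting uniformly across all Dynkin types would be the remaining difficulty.
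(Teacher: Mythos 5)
Your overall architecture coincides with the paper's: restrict two-sided tilting complexes to one-sided ones, use the Rouquier--Zimmermann result (the paper's Lemma \ref{RZ left}) to identify the kernel of this restriction with $\Out(\La)$, use the bijection $B_{\Delta^{\rm f}}\to\tilt\La$ of Theorems \ref{tilt-bij} and \ref{group surj} for the factorization, and reduce the homomorphism property to a conjugation formula for the $T_i$. The genuine gap is in your justification of that conjugation formula. You propose to deduce ${}_\phi\La\Lotimes T_i\Lotimes({}_\phi\La)^{-1}\cong T_{\bar\phi(i)}$ from ``naturality of the construction under the relabelling induced by $\phi$''. But $\Out(\La)$ is not generated by relabellings: by the paper's Propositions \ref{fix arrowA} and \ref{fix arrow}, $\Aut(\La)=\langle \Inn(\La),p,H\rangle$, where $H$ consists of automorphisms fixing every idempotent $e_i$ and the arrows $a_i$ while deforming the arrows $b_i$ by higher-order terms $f(b_i)=\sum_j k_j(b_ia_i)^{j-1}b_i$; such automorphisms are in general \emph{not} inner (already for $\type{A}_2$, scaling $b_1$ alone is not inner) yet induce the identity permutation of the vertices. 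For $\phi\in H$ your formula asserts that ${}_\phi\La$ commutes with each $T_i$ in $\dpg(\La)$, which has no naturality content at all. Since $T_i=\sigma^{\geq-1}(\La\Lwotimes\I_{t_i}\Lwotimes\La)$ is built from the ideal $\I_{t_i}$ of the completed preprojective algebra $\wLa$ of the \emph{extended} graph, the only way the paper can move the twist across $\I_{t_i}$ is to lift $\phi$ to $\tilde\phi\in\Aut(\wLa)$ compatibly with the surjection $\wLa\to\La$ (Proposition \ref{restriction}), after which ${}_{\phi^{-1}}\La\cong\La_{\tilde\phi}$ as complexes of $(\La,\wLa)$-bimodules and the computation of Lemma \ref{bimo1} goes through. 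Proving that this lifting exists is the technical heart of the paper --- the whole of Section 5, carried out by explicit computation on the generators of $H$ --- and it, not the knowledge of $\Out(\La)$ or of its action on the folded graph, is the reason the theorem is restricted to types $\type{A}_n$ and $\type{D}_n$ (the lifting is left open for $\type{E}$). Your relabelling argument cannot see the subgroup $H$ at all.

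Note also what fails quantitatively without the exact conjugation formula. Your restriction argument does show that $\Theta$ is a \emph{bijection of sets}: every $X\in\dpg(\La)$ factors uniquely as ${}_\phi\La\Lotimes T_a$, exactly as in the paper's injectivity/surjectivity argument, and this part of your proposal is sound. But conjugation then only yields, via Lemma \ref{RZ left}, an isomorphism ${}_\phi\La\Lotimes T_a\Lotimes({}_\phi\La)^{-1}\cong{}_\psi\La\Lotimes T_{a'}$ with an \emph{unknown} Picard factor $\psi\in\Out(\La)$, because matching restrictions determines a two-sided tilting complex only up to such a factor. Normality of $\langle T_i\ |\ i\in\Delta_0^{\rm f}\rangle$ in $\dpg(\La)$, and hence the identification of $\dpg(\La)$ with the stated semidirect product, require $\psi=\id$; this is precisely what the paper's Lemmas \ref{bimo1}, \ref{bimo2} and \ref{bimo3}, resting on Proposition \ref{restriction}, supply, and what is missing from your proposal.
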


\textbf{Notation}
Throughout this paper, 
let $K$ be an algebraically closed field and $D:=\Hom_K(-,K)$.
For an algebra $\La$ over $K$, we denote by $\mod\Lambda$ the category of finitely generated right $\Lambda$-modules and by $\proj\La$ the category of finitely generated projective $\La$-modules. 
We denote by $\Db(\mod\La)$ the bounded derived category of $\mod\La$ and by $\Kb(\proj\Lambda)$ the bounded homotopy category of $\proj\La$. 
Let $\La^{\rm e}:=\La^\op\otimes_K\La$, where $\La^\op$ denote the opposite algebra of $\La$, and we assume that $K$ acts centrally and identify $\La^{\rm e}$-modules with $\La$-bimodules.\\


\section{Preliminaries} 

In this section, we recall some definitions and results, which are necessary in this paper.

\subsection{Preprojective algebras}\label{nakayama per}

Let $\Delta$ be a simply-laced (i.e. type $\type{A},\type{D},\type{E}$) Dynkin graph and we denote by $\Delta_0$ the vertices of $\Delta$. Let $\La=\La_\Delta$ be the preprojective algebra of $\Delta$ (see \cite{GP,DR,Rin,BGL} for the background). It is finite dimensional and selfinjective \cite[Theorem 4.8]{BBK}. 
Without loss of generality, we may suppose that vertices are given as Figure \ref{figure} (these choices make the notation simpler) and  
let $e_i$ be the primitive idempotent of $\La$ associated with $i\in \Delta_0$. 
We denote the Nakayama permutation of $\La$ by $\iota:\Delta_0\to \Delta_0$ (i.e. $D(\La e_{\iota(i)})\cong e_{i}\La$). 
Then, we have $\iota=\id$ if  $\Delta$ is type $\type{D}_{2n},\type{E}_7$ and $\type{E}_8$. Otherwise, we have $\iota^2=\id$ and it is given as follows.

\[\left\{\begin{array}{ll}
\iota(1)=1\ \mbox{and}\ \iota(i)=i+n-1\ \mbox{for}\ i\in\{2,\cdots, n\}\ \ \ \ \ &\mbox{if $\type{A}_{2n-1}$}\\
\iota(i)=i+n\ \mbox{for}\ i\in\{1,\cdots, n\} \ \ \ &\mbox{if $\type{A}_{2n}$}\\
\iota(1)=2n+1\ \mbox{and}\ \iota(i)=i\ \mbox{for}\ i\notin\{1,2n+1\} \ \ \ &\mbox{if $\type{D}_{2n+1}$}\\
\iota(3)=5, \iota(4)=6\ \mbox{and}\ \iota(i)=i \ \mbox{for}\ i\in\{1,2\}&\mbox{if $\type{E}_{6}$.}\\
\end{array}\right.\]

\begin{figure}
\[\def\arraystretch{2}
\begin{array}{ll}
\type{A}_{2n-1}\ : &
\begin{array}{c}
\xymatrix@C5pt@R10pt{
n \ar@{-}[r] & \cdots \ar@{-}[r] & 2 \ar@{-}[r] &\ar@{-}[r]  1 &\ar@{-}[r](n+1)& \cdots \ar@{-}[r] &   (2n-1). 
}
\end{array} \\
\type{A}_{2n}\ : &
\begin{array}{c}
\xymatrix@C5pt@R10pt{
n \ar@{-}[r]  & \cdots \ar@{-}[r] & 2 \ar@{-}[r] &\ar@{-}[r]  1 &\ar@{-}[r](n+1)& \cdots \ar@{-}[r]   &   2n. 
}
\end{array} \\

\type{B}_n\ (n\geq1): &
\begin{array}{c}
\xymatrix{
1 \ar@{-}[r]^{4} & 2 \ar@{-}[r]  & \cdots &\ar@{-}[r]  & n -1\ar@{-}[r]  & n. 
}
\end{array} \\
\type{D}_n\ (n\geq4): &
\begin{array}{c}
\xymatrix@R=0.3cm{
1  \ar@{-}[rd]&   &   &        & \\
  & 2 \ar@{-}[r]  &3 \ar@{-}[r]  & \cdots \ar@{-}[r]  & n-1.  \\
n \ar@{-}[ru] &   &   &        &
}
\end{array} \\
\type{E}_n\ (n=6,7,8): &
\begin{array}{c}
\xymatrix{
  & & 1  \ar@{-}[d]&   &   &        & \\
4 \ar@{-}[r]  & 3 \ar@{-}[r]  &
2 \ar@{-}[r]   & 5 \ar@{-}[r] & \cdots \ar@{-}[r]& n. 
}
\end{array} \\
\type{F}_4\ : &
\begin{array}{c}
\xymatrix{
1 \ar@{-}[r] & 2 \ar@{-}[r]^4 & 3 \ar@{-}[r]  & 4. 
}
\end{array} \\

\end{array}\]
\caption{}\label{figure}\end{figure}

\subsection{Weyl group}\label{relation}

Let $\Delta$ be a graph given as Figure \ref{figure}. 
The \emph{Weyl group} $W_\Delta$ associated with $\Delta$ is defined by the generators 
$s_i$ and relations 
$(s_is_j)^{m_\Delta(i,j)}=1$, 
where  

\[
m_\Delta(i,j):=\left\{\begin{array}{ll}
1\ \ \ \ \ &\mbox{if $i=j$,}\\
2\ \ \ \ \ &\mbox{if no edge between $i$ and $j$ in $\Delta$,}\\
3\ \ \ \ \ &\mbox{if there is an edge $i\stackrel{ }{\mbox{---}}j$ in $\Delta$,}\\
4\ \ \ \ &\mbox{if  there is an edge $i\stackrel{4}{\mbox{---}}j$ in $\Delta$.}\\
\end{array}\right.\]

For $w\in W_\Delta$, we denote by $\ell(w)$ the length of $w$.
 
Let $\Delta$ be a simply-laced Dynkin graph, $\La$ the preprojective algebra and $\iota$ the Nakayama permutation of $\Lambda$. 
Then $\iota$ acts on an element of the Weyl group $W_\Delta$ by $\iota(w):=s_{\iota(i_1)}s_{\iota(i_2)}\cdots s_{\iota(i_k)}$ 
for $w=s_{i_1}s_{i_2}\cdots s_{i_k}\in W_\Delta$. 
We define the subgroup $W^\iota_\Delta$ of $W_\Delta$ by 
\[W^\iota_\Delta:=\{w\in W_\Delta\ |\ \iota(w)=w \}.\]

Note that we have $w_0ww_0=\iota(w)$ for $w\in W_\Delta$ for the longest element $w_0$ of $W_\Delta$.

Moreover we have the following result (see \cite[Chapter 13]{C},\cite[Theorem 3.1]{AM}).

\begin{theorem}\label{folding}
Let $\Delta$ be a simply-laced Dynkin graph whose vertices are given as Figure \ref{figure} and $W_\Delta$ the Weyl group of $\Delta$. 
Let $\Delta^{\rm f}$ be a graph given by the following type. 
\[\begin{array}{|c|c|c|c|c|c|c|c|}
\hline                                                                       
\Delta & \type{A}_{2n-1},\type{A}_{2n}&\type{D}_{2n} &\type{D}_{2n+1} & \type{E}_6  & \type{E}_7 & \type{E}_8 \\ \hline
\Delta^{\rm f} &\type{B}_n  &\type{D}_{2n}        &\type{B}_{2n}      & \type{F}_4 & \type{E}_7  & \type{E}_8 \\ \hline
\end{array}\]

Then we have $W_\Delta^\iota=\langle t_i\ |\ i \in \Delta^{\rm f}_0\rangle$, where 
\[\tag{T}\label{fold}
t_i:=\left\{\begin{array}{ll}
\ s_i & \mbox{if  $i=\iota(i)$ in $\Delta$},\\
\ s_is_{\iota(i)}s_i & \mbox{if there is an edge $i\stackrel{ }{\mbox{---}}\iota(i)$ in $\Delta$},\\
\ s_is_{\iota(i)} & \mbox{if no edge between $i$ and $\iota(i)$ in $\Delta$}.\\
\end{array}\right.\]
and $W_\Delta^\iota$ is isomorphic to $W_{\Delta^{\rm f}}$. 
\end{theorem}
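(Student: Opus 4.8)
The plan is to recognize $\iota$ as a diagram automorphism of the Coxeter system $(W_\Delta,\{s_i\}_{i\in\Delta_0})$ and to realize the $t_i$ as the canonical generators of the fixed subgroup $W_\Delta^\iota$ coming from folding. First I would record that for each $i\in\Delta_0^{\rm f}$ the element $t_i$ is precisely the longest element of the parabolic subgroup $W_{I_i}:=\langle s_j\ |\ j\in I_i\rangle$, where $I_i\subseteq\Delta_0$ is the $\iota$-orbit $\{i,\iota(i)\}$: indeed $W_{I_i}$ is of type $\type{A}_1$, $\type{A}_1\times\type{A}_1$ or $\type{A}_2$ according to the three cases in \eqref{fold}, whose longest elements are $s_i$, $s_is_{\iota(i)}$ and $s_is_{\iota(i)}s_i$ respectively. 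In particular each $t_i$ is an involution; and since $\iota$ permutes $I_i$ it must fix the (unique, length-maximal) longest element of $W_{I_i}$, so $t_i\in W_\Delta^\iota$ and therefore $\langle t_i\ |\ i\in\Delta_0^{\rm f}\rangle\subseteq W_\Delta^\iota$.

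Next I would establish the reverse inclusion $W_\Delta^\iota\subseteq\langle t_i\rangle$ together with the fact that $(W_\Delta^\iota,\{t_i\})$ is itself a Coxeter system. The cleanest route is the folding argument on the geometric representation $V$ of $W_\Delta$: $\iota$ acts on $V$ by permuting the simple roots, the fixed space $V^\iota$ is preserved by $W_\Delta^\iota$, and the $\iota$-orbit sums of the simple roots span a finite root system in $V^\iota$ (possibly non-reduced in type $\type{A}_{2n}$) whose reflection group is $W_\Delta^\iota$ and whose simple reflections are the restrictions of the $t_i$. One then shows, by an induction on length using that every $1\neq w\in W_\Delta^\iota$ has an $\iota$-stable descent set and hence can be shortened by some $t_i$, that $W_\Delta^\iota$ is generated by the $t_i$; this is the general theorem on fixed subgroups of Coxeter groups under diagram automorphisms. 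Alternatively, using the identity $w_0ww_0=\iota(w)$ recorded above one has $W_\Delta^\iota=\cen_{W_\Delta}(w_0)$, and the same conclusion can be read off from the known structure of the centralizer of the longest element.

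Finally I would pin down the Coxeter matrix of $(W_\Delta^\iota,\{t_i\})$, i.e. verify $(t_it_j)^{m_{\Delta^{\rm f}}(i,j)}=1$ with the multiplicities dictated by the table. Since $t_i,t_j$ both lie in the parabolic $W_{I_i\cup I_j}$, each such relation is a computation inside a parabolic of rank $\leq 4$, so it reduces to a finite check in the root systems of type $\type{A}_2,\type{A}_3,\type{A}_4,\type{D}_4$ that occur as $I_i\cup I_j$ runs over pairs of adjacent orbits. Running these checks type by type ($\type{A}_{2n-1}\to\type{B}_n$, $\type{A}_{2n}\to\type{B}_n$, $\type{D}_{2n+1}\to\type{B}_{2n}$, $\type{E}_6\to\type{F}_4$, together with the trivial cases $\iota=\id$ for $\type{D}_{2n},\type{E}_7,\type{E}_8$) recovers exactly the edges and labels of $\Delta^{\rm f}$, which yields a surjective homomorphism $W_{\Delta^{\rm f}}\to\langle t_i\rangle$. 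Comparing with the Coxeter presentation obtained in the previous step shows this surjection is an isomorphism, whence $W_\Delta^\iota=\langle t_i\ |\ i\in\Delta_0^{\rm f}\rangle\cong W_{\Delta^{\rm f}}$.

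I expect the main obstacle to be the second step, namely proving that the $t_i$ generate all of $W_\Delta^\iota$ and not merely a proper subgroup: producing the $\iota$-equivariant descent/reduction argument (or, equivalently, controlling $\cen_{W_\Delta}(w_0)$) is where the genuine content lies. By contrast, the verification of the individual braid relations in the third step, while case-by-case, is routine finite computation in small root systems.
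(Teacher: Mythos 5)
Your proposal is correct, but a comparison with "the paper's proof" is slightly moot: the paper does not prove Theorem \ref{folding} at all, it simply quotes it with references to \cite[Chapter 13]{C} and \cite[Theorem 3.1]{AM}. What you have written is essentially a reconstruction of the standard argument contained in those sources (Steinberg's theorem on fixed subgroups of Coxeter systems under diagram automorphisms, in Carter's case carried out via the action on the fixed subspace $V^\iota$): identify each $t_i$ in (\ref{fold}) as the longest element of the parabolic $W_{I_i}$ attached to the $\iota$-orbit $I_i$, hence $\iota$-fixed; prove generation of $W_\Delta^\iota$ by the $\iota$-stable descent-set induction (if $w\in W_\Delta^\iota$ is nontrivial, its descent set contains a full orbit $I_i$, so $\ell(wt_i)=\ell(w)-\ell(t_i)$ and $wt_i\in W_\Delta^\iota$); then pin down the Coxeter matrix of $(W_\Delta^\iota,\{t_i\})$ by finite computations in small parabolics. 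All three steps are sound, and you correctly identify step two as the one carrying the real content --- invoking the general fixed-subgroup theorem there is legitimate and is exactly what the paper's citation amounts to. Two minor inaccuracies, neither fatal: the parabolics $W_{I_i\cup I_j}$ generated by two adjacent orbits are of types $\type{A}_2$, $\type{A}_3$, $\type{A}_4$ and $\type{A}_2\times\type{A}_2$ (the last occurring for pairs of two-element orbits in types $\type{A}_{2n-1}$, $\type{A}_{2n}$ and $\type{E}_6$); type $\type{D}_4$ never occurs, since in $\type{D}_{2n+1}$ the orbit $\{1,2n+1\}$ together with the branch vertex $2$ spans only an $\type{A}_3$. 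Also, your alternative route via $W_\Delta^\iota=\cen_{W_\Delta}(w_0)$ (using $w_0ww_0=\iota(w)$) is a correct observation, but "reading off the conclusion from the known structure of the centralizer of $w_0$" is vaguer than the descent argument and should not replace it.
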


For the convenience, we introduce the following terminology.

\begin{definition}\label{def fold}
We call the graph $\Delta^{\rm f}$ given in Theorem \ref{folding} the \emph{folded graph} of $\Delta$.
\end{definition}

Moreover we denote the braid group by $B_{\Delta^{\rm f}}$, which is defined by generators $a_i$ $(i\in \Delta^{\rm f}_0)$ with relations $(a_ia_j)^{m_{\Delta^{\rm f}}(i,j)}=1$ for $i\neq j$.

\subsection{Silting and tilting complexes}

In this subsection, we recall the notion of tilting and silting complexes. See \cite{Ric1,AI} for additional background.

\begin{definition}\label{def silt obj}
We call a complex $P$ in $\Kb(\proj\Lambda)$ \emph{silting} (respectively, \emph{tilting})
if it satisfies $\Hom_{\Kb(\proj\Lambda)}(P, P[i])=0$ for any $i>0$ (respectively, $i\neq0$) and the smallest thick subcategory containing $P$ is $\Kb(\proj\Lambda)$.
We denote by $\silt\La$ (respectively, $\tilt\La$) the set of isomorphism classes of basic silting complexes (respectively, tilting complexes) in $\Kb(\proj\Lambda)$. 
Moreover, let $\tsilt\Lambda$ (respectively, $\ttilt\Lambda$) be the subset of $\silt\Lambda$ (respectively, $\tilt\Lambda$) consisting of two-term (i.e. it is concentrated in the degree 0 and $-1$) complexes.
\end{definition}

Moreover we recall mutation for silting complexes.

\begin{definition}\label{defsm}
Let $P$ be a basic silting complex of $\Kb(\proj\Lambda)$ and decompose it as $P=X\oplus M$. 
We take a triangle
\[\xymatrix{
X \ar[r]^f & M' \ar[r] & Y \ar[r] & X[1]
}\]
with a minimal left ($\add M$)-approximation $f$ of $X$.
Then $\mu_X^+(P):=Y\oplus M$ is again a silting complex, 
and we call it the \emph{left mutation} of $P$ with respect to $X$.
Dually, we define the right mutation $\mu_X^-(P)$. 
Mutation means either left or right mutation.  
If $X$ is indecomposable, then we say that 
mutation is \emph{irreducible}.

Moreover, if $P$ and $\mu_X^+(P)$ are tilting complexes, then we call it the (left) \emph{tilting mutation}. In this case, if there exists no non-trivial direct summand $X'$ of $X$ such that $\mu_{X'}^+(T)$ is tilting, then we say that tilting mutation is \emph{irreducible}. 
\end{definition}

\subsection{Summary of previous results}\label{some results}

In this subsection, we review some known results.
Let $\Delta$ be a simply-laced Dynkin graph with $\Delta_0:=\{1,\ldots, n\}$, $\Delta^{\rm f}$ the folded graph of $\Delta$ and $\La$ the preprojective algebra of $\Delta$.
Let $I_i:=\La(1-e_i)\La$, where $e_i$ the primitive idempotent of $\La$ associated with $i\in \Delta_0$. 
We denote by $\langle I_1,\ldots,I_n\rangle$ the set of ideals of $\La$ which can be written as $I_{i_1}I_{i_2}\cdots I_{i_k}$ for some $k\geq0$ and $i_1,\ldots,i_k\in \Delta_0$. 
Then we have the following result \cite{BIRS,IR,AM}.

\begin{theorem}\label{tau-weyl3}
We have  a bijection $W_\Delta\to\langle I_1,\ldots,I_n\rangle$, which is given by $w\mapsto I_w =I_{i_1}I_{i_2}\cdots I_{i_k}$ for any reduced 
expression $w=s_{i_1}\cdots s_{i_k}$.
\end{theorem}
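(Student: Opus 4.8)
The plan is to build the correspondence from three ingredients: the local relations among the ideals $I_i$, a multiplicativity property $I_uI_v=I_{uv}$ for length-additive products, and a module-theoretic invariant that recovers $w$ from $I_w$. First I would establish the local relations. Since $\La/I_i\cong S_i$ is the $i$-th simple module, a direct computation in the preprojective algebra (or an analysis of the minimal projective presentations of the $S_i$) gives: (i) idempotency $I_i^2=I_i$; (ii) commutation $I_iI_j=I_jI_i$ when $i,j$ are non-adjacent in $\Delta$; and (iii) the braid relation $I_iI_jI_i=I_jI_iI_j$ when there is an edge $i\stackrel{}{\mbox{---}}j$. These are exactly the defining relations of $W_\Delta$, with the relation $s_i^2=1$ replaced by idempotency $I_i^2=I_i$.

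Next, well-definedness. By Matsumoto's theorem any two reduced expressions of a fixed $w\in W_\Delta$ are linked by a sequence of the commutation and braid moves of (ii) and (iii); no $s_i^2$-cancellations occur, since all reduced words for $w$ have the same length $\ell(w)$. The relations of the first step then show that $I_{i_1}\cdots I_{i_k}$ is independent of the chosen reduced word, so $I_w$ is well defined. Concatenating reduced words for $u$ and $v$ whose lengths add produces a reduced word for $uv$, whence the key multiplicativity
\[
I_uI_v=I_{uv}\quad\text{whenever}\quad \ell(uv)=\ell(u)+\ell(v).
\]
Surjectivity onto $\langle I_1,\ldots,I_n\rangle$ follows: given any product $I_{i_1}\cdots I_{i_k}$, I would reduce it to some $I_w$ by repeatedly applying $I_wI_i=I_{ws_i}$ when $\ell(ws_i)>\ell(w)$ and $I_wI_i=I_w$ when $\ell(ws_i)<\ell(w)$; the latter uses idempotency, since writing $w=(ws_i)s_i$ reduced gives $I_wI_i=I_{ws_i}I_i^2=I_{ws_i}I_i=I_w$. (The empty product $k=0$ corresponds to $w=e$ and $I_e=\La$.)

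The main obstacle is injectivity: distinct Weyl group elements must yield distinct ideals. Here I would pass to the finite-dimensional module $\La/I_w$ and extract from it an invariant that pins down $w$. I expect the right descent set of $w$ to be recovered intrinsically as $\{\,i : I_wI_i=I_w\,\}$, so that one can strip off simple reflections one at a time and read back a reduced word; the nontrivial point is that this procedure recovers $w$ unambiguously. Concretely, this amounts to proving that $I_{ws_i}=I_wI_i\subsetneq I_w$ is a \emph{strict} inclusion whenever $\ell(ws_i)>\ell(w)$, so that the map is strictly decreasing along the weak order and in particular $I_w=\La$ forces $w=e$. I would establish this strictness by a dimension count: the composition factors of $\La/I_w$ (counted with multiplicity) should be governed by the inversion set of $w$, so that $\dim_K(\La/I_w)$ is strictly monotone along the weak order and cannot coincide for two different elements. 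This is precisely the point at which the selfinjective (2-Calabi--Yau) structure of $\La$ and the interpretation of the functors $-\otimes_\La I_i$ as reflection functors enter, and it is the technical heart of the argument, carried out in \cite{BIRS,IR}.
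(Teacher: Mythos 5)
Your first three steps are sound and are essentially how the literature proceeds: idempotency is in fact immediate from the definition (since $1-e_i$ is idempotent, $I_i^2=\La(1-e_i)\La(1-e_i)\La\supseteq\La(1-e_i)^3\La=I_i$), the commutation and braid relations for the $I_i$ are proved in \cite{IR,BIRS}, and Matsumoto's theorem then gives well-definedness of $I_w$, the length-additive multiplicativity, and surjectivity exactly as you argue. Be aware, though, that the paper itself does not reprove any of this: its entire proof of this theorem is the citation ``See \cite[Theorem 2.14]{M}'', so all of the content is concentrated in the step you correctly single out as the technical heart, namely injectivity, and that step cannot be waved through.

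It is precisely there that your argument has a genuine gap, in two places. First, the descent-stripping scheme does not close: recovering the descent set as $\{i : I_wI_i=I_w\}$ (which does follow once one knows $I_{ws_i}\subsetneq I_w$ for $\ell(ws_i)>\ell(w)$) shows that $I_w=I_{w'}$ forces $w$ and $w'$ to have the same right descents, but writing $w=us_i$ and $w'=u's_i$ reduced only yields $I_uI_i=I_{u'}I_i$, and multiplication by $I_i$ is not injective (e.g. $I_eI_i=I_{s_i}I_i$), so you cannot conclude $I_u=I_{u'}$ and the induction stalls. Second, strict monotonicity of $\dim_K(\La/I_w)$ along the weak order does not imply injectivity: the weak order has many antichains, and two incomparable elements could a priori yield ideals of the same dimension. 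The correct completion of the idea you gesture at is the stronger, vector-valued statement: the class of $\La/I_w$ in the Grothendieck group (its dimension vector) equals the sum of the roots in the inversion set of $w$ (of $w^{-1}$, depending on conventions), because $\La/I_w$ admits a filtration whose layers are bricks with dimension vectors exactly those roots; this is established in \cite{AIRT,IRRT}, not in \cite{BIRS,IR}, which is where you defer it. Since that sum equals $\rho-w^{\pm1}(\rho)$ and $\rho$ is a regular weight, it determines $w$, and injectivity follows. (Mizuno's cited proof in \cite{M} instead runs through support $\tau$-tilting modules and mutation; either way, some global invariant of this kind is unavoidable, as the local $0$-Hecke relations alone cannot separate $I_w$ from $I_{w'}$.)
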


\begin{proof}
See \cite[Theorem 2.14]{M}.
\end{proof}

Next, for $i\in\Delta^{\rm f}_0$, we define $\bmu_i^+(\La)$ in $\Kb(\proj\Lambda)$, where $\bmu_i^+$ is given as a composition of left silting mutation as follows 
\[\bmu_i^+:=\left\{\begin{array}{ll}
\ \mu_{i}^+ & \mbox{if  $i=\iota(i)$ in $\Delta$},\\
\ \mu_{i}^+\circ\mu_{{\iota(i)}}^+\circ\mu_{i}^+& \mbox{if there is an edge $i\stackrel{ }{\mbox{--}}\iota(i)$ in $\Delta$},\\
\ \mu_{i}^+\circ\mu_{{\iota(i)}}^+ & \mbox{if no edge between $i$ and $\iota(i)$ in $\Delta$}.\\ \end{array}\right.\]

On the other hand, for $i\in\Delta^{\rm f}_0$, we let 
\[e_{i}^\iota:=\left\{\begin{array}{ll}
 e_i & \mbox{if  $i=\iota(i)$ in $\Delta$},\\
e_i+e_{\iota(i)} & \mbox{if  $i\neq\iota(i)$ in $\Delta$}.
\end{array}\right.\]

It is easy to check that
$\bmu_i^+(\La)=\mu_{(e_{i}^\iota\La)}^+(\La)$ and hence we have a two-term tilting complex 
$$\bmu_i^+(\La)=\left\{\begin{array}{cccc}
\stackrel{-1}{ e_{i}^\iota\La}&\stackrel{f}{\longrightarrow}&
\stackrel{0}{R}\\
&\oplus&&\in\Kb(\proj\La)\\
&&(1-e_{i}^\iota)\La
\end{array}\right.$$ 
where $f$ is a minimal left $(\add((1-e_{i}^\iota)\La))$-approximation. 

Then $\bmu_i^+$ gives an irreducible left tilting mutation of $\La$ and any  irreducible left tilting mutation of $\La$ is given as $\bmu_i^+$ for some $i\in \Delta_0^{\rm f}$ \cite[Theorem 4.2]{AM}. 
Dually, we define $\bmu_i^-$. Note that $\bmu_i^+\circ\bmu_i^-=\id$ (\cite[Proposition 2.33]{AI}).

Then these results \cite{AIR,M,AM} are fundamental.

\begin{theorem}\label{tau-weyl2}
\begin{itemize}
\item[(a)] We have a bijection 
$$W_\Delta \longrightarrow \tsilt\La,\ s_{i_1}\cdots s_{i_k}\mapsto
\mu_{i_1}^+\circ\cdots \circ \mu_{i_k}^+(\La),$$
where $s_{i_1}\cdots s_{i_k}$ is a reduced 
expression.
\item[(b)] 
We have a bijection 
$$W_{\Delta^{\rm f}} \longrightarrow \ttilt\Lambda,\ s_{i_1}\cdots s_{i_k}\mapsto
\bmu_{i_1}^+\circ\cdots \circ \bmu_{i_k}^+(\La),$$
where $s_{i_1}\cdots s_{i_k}$ is a reduced 
expression.
\end{itemize}
\end{theorem}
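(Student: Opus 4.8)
The plan is to obtain (a) by combining Theorem \ref{tau-weyl3} with the reduction from two-term silting complexes to support $\tau$-tilting modules, and then to deduce (b) by folding. For (a), I would first invoke the bijection of Adachi--Iyama--Reiten \cite{AIR} between $\tsilt\La$ and the set of basic support $\tau$-tilting $\La$-modules, sending a two-term complex $(P^{-1}\to P^0)$ to its $0$-th cohomology. Next I would pass through the ideals $I_w$: Theorem \ref{tau-weyl3} attaches to each $w\in W_\Delta$ the ideal $I_w\in\langle I_1,\ldots,I_n\rangle$, and by \cite{M} the support $\tau$-tilting modules of $\La$ are exactly those cut out by the $I_w$, so that the composite bijection $W_\Delta\to\langle I_1,\ldots,I_n\rangle\to\tsilt\La$ is available. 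The main remaining task for (a) is then to match the group-theoretic generators with mutation.

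The key step is the observation that $\mu_i^+=\mu_{(e_i\La)}^+$ is irreducible mutation at the $i$-th indecomposable projective. I would show that, under the identifications above, this irreducible left mutation corresponds exactly to the transition $I_w\mapsto I_iI_w$ when $\ell(s_iw)>\ell(w)$, i.e.\ to left multiplication by $s_i$ in the weak order on $W_\Delta$. Granting this, the assignment $s_{i_1}\cdots s_{i_k}\mapsto\mu_{i_1}^+\cdots\mu_{i_k}^+(\La)$ factors through the already well-defined map $w\mapsto I_w$, which settles the crucial point that the complex depends only on $w$ and not on the chosen reduced expression (here one uses precisely that $I_w$ is independent of the reduced word, Theorem \ref{tau-weyl3}). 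Injectivity is then inherited from the bijection $w\mapsto I_w$, while surjectivity follows from the connectedness of the two-term silting mutation graph together with the fact that every irreducible left mutation of $\La$ is some $\mu_i^+$.

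For (b), I would cut out $\ttilt\La$ inside $\tsilt\La$ by the extra tilting condition $\Hom_{\Kb(\proj\La)}(T,T[-1])=0$, which is the only nontrivial negative-shift vanishing for a two-term complex. Since $\La$ is selfinjective, I would rewrite this via the Nakayama functor $\nu$ and its permutation $\iota$, using Serre-type duality to convert $\Hom(T,T[-1])$ into a condition relating $T$ and $\nu T$. The outcome, matching \cite[Theorem 4.2]{AM}, is that $T=\mu_w^+(\La)$ is tilting precisely when $w$ is fixed by the induced involution $w\mapsto\iota(w)$, that is $w\in W_\Delta^\iota$. By Theorem \ref{folding} we have $W_\Delta^\iota=\langle t_i\mid i\in\Delta_0^{\rm f}\rangle\cong W_{\Delta^{\rm f}}$, and by construction the composite mutation $\bmu_i^+$ realizes the generator $t_i$ (comparing the three case distinctions in the definitions of $\bmu_i^+$ and of $t_i$). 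Transporting a reduced expression in $W_{\Delta^{\rm f}}$ to the corresponding reduced $t$-word in $W_\Delta^\iota$ and applying (a) then yields the bijection $W_{\Delta^{\rm f}}\to\ttilt\La$.

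The main obstacle I anticipate is twofold, and both parts live at the interface between the representation theory and the combinatorics. First, proving that irreducible mutation corresponds cleanly to left multiplication by a simple reflection — in particular that no length drop or degeneracy occurs along a reduced word — requires genuine input on how the ideals $I_i$ act, and is exactly where the hypothesis that the expression be reduced is essential. Second, the tilting characterization \emph{tilting} $\iff$ \emph{$\iota$-invariance} is the delicate point of (b): one must control $\Hom(T,T[-1])$ through the Nakayama functor and then verify that the composite $\bmu_i^+$ both satisfies the braid relations of $W_{\Delta^{\rm f}}$ and remains inside $\ttilt\La$ at each folded step, rather than merely inside $\tsilt\La$.
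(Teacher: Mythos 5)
Your proposal is correct and takes essentially the same approach as the paper: the paper proves this statement simply by citing \cite[Theorems 4.1, 4.2]{AM}, and those proofs run exactly along the lines you sketch --- the bijection of \cite{AIR} between two-term silting complexes and support $\tau$-tilting modules, the bijection $w\mapsto I_w$ of \cite{M} together with its compatibility of irreducible left mutation with multiplication by a simple reflection along reduced words, the selfinjective/Nakayama-functor criterion identifying the tilting complexes among two-term silting ones with those indexed by $w\in W_\Delta^\iota$, and the folding isomorphism $W_\Delta^\iota\cong W_{\Delta^{\rm f}}$ of Theorem \ref{folding}. The delicate points you flag (no length degeneracy along reduced words; tilting $\Leftrightarrow$ $\iota$-invariance) are precisely the content of those cited results, so there is no gap.
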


\begin{proof}
See \cite[Theorem 4.1,4.2]{AM}.
\end{proof}

Moreover we recall the main result of \cite{AM}. 
Let $B_{\Delta^{\rm f}}$ be the braid group generated by $a_i$ $(i\in \Delta_0^{\rm f})$. 

Then we have the following result.
\begin{theorem}\label{tilt-bij}
\begin{itemize}
\item[(a)] We have a bijection 
$$B_{\Delta^{\rm f}}\longrightarrow\tilt\La,$$ 
$$\ \ \ \ \ \ \ \ \ \ \ a=a_{i_1}^{\epsilon_{i_1}}\cdots a_{i_k}^{\epsilon_{i_k}}\mapsto
\bmu_a(\La):=\bmu_{i_1}^{\epsilon_{i_1}}\circ\cdots \circ \bmu_{i_k}^{\epsilon_{i_k}}(\La).$$
\item[(b)]Any basic tilting complex $T$ of $\Kb(\proj\La)$ satisfies $\End_{\Kb(\proj\La)}(T)\cong\La$. 
\end{itemize}
\end{theorem}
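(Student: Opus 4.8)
The plan is to prove Theorem \ref{tilt-bij} in two logical halves: parametrising all tilting complexes by braid words (surjectivity and the endomorphism computation (b)), and then showing that the only coincidences among these words are the braid relations (well-definedness and injectivity). The first reduction I would make is to exploit that $\La$ is selfinjective: over a selfinjective algebra every silting complex in $\Kb(\proj\La)$ is in fact tilting, so $\silt\La=\tilt\La$ and silting mutation coincides with tilting mutation. Since each $\bmu_i^{+}$ is, by the results recalled before Theorem \ref{tau-weyl2}, precisely an irreducible left tilting mutation of $\La$, with $\bmu_i^{+}\circ\bmu_i^{-}=\id$, the assignment $a\mapsto\bmu_a(\La)$ is at least a well-defined map from the free monoid on the symbols $a_i^{\pm1}$ to $\tilt\La$. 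The content of (a) is that it descends to $B_{\Delta^{\rm f}}$ and is bijective.

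For surjectivity together with part (b) I would argue by a simultaneous induction on the number of mutation steps, the engine being silting-discreteness of $\La$. The base input is Theorem \ref{tau-weyl2}(a): $\tsilt\La$ is in bijection with the finite Weyl group $W_\Delta$, so it is finite. Assuming inductively that every tilting complex reachable from $\La$ in at most $k$ irreducible mutations has endomorphism algebra isomorphic to $\La$, the finiteness of two-term silting complexes over each such endomorphism algebra is again controlled by $W_\Delta$; Aihara's reduction criterion for silting-discreteness then lets one pass to the $(k{+}1)$-st step, where the explicit two-term description of $\bmu_i^{+}(\La)$ recalled in the text shows, using selfinjectivity, that the endomorphism algebra remains isomorphic to $\La$. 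This proves (b) along the induction and simultaneously yields silting-discreteness, whence the silting quiver is connected and every tilting complex is obtained from $\La$ by a finite sequence of the $\bmu_i^{\pm}$; that is, every tilting complex is of the form $\bmu_a(\La)$, giving surjectivity in (a).

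To see that $a\mapsto\bmu_a(\La)$ factors through $B_{\Delta^{\rm f}}$ I must verify the defining relations on the operators. The relation $a_ia_i^{-1}=1$ is exactly $\bmu_i^{+}\circ\bmu_i^{-}=\id$. For the braid relation between distinct $i,j$, both alternating words of length $m_{\Delta^{\rm f}}(i,j)$, applied to $\La$, compute the two-term tilting complex attached to the longest element of the rank-two parabolic $W_{\{i,j\}}\le W_{\Delta^{\rm f}}$, presented by its two reduced expressions. Theorem \ref{tau-weyl2}(b) realises a \emph{bijection} $W_{\Delta^{\rm f}}\to\ttilt\La$, and hence makes the composite $\bmu^{+}$ associated with an element of $W_{\Delta^{\rm f}}$ independent of the chosen reduced word; this forces the two sides to agree as two-term tilting complexes, and the reduction of the first paragraph then promotes the equality from two-term complexes to an identity of operators on all of $\tilt\La$. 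Thus $\bmu_a(\La)$ depends only on $a\in B_{\Delta^{\rm f}}$.

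The remaining and hardest point is injectivity: I must show that $\bmu_a(\La)\cong\bmu_b(\La)$ forces $a=b$ in $B_{\Delta^{\rm f}}$, equivalently that the $B_{\Delta^{\rm f}}$-action on $\tilt\La$ is free. I expect this to be the main obstacle, since surjectivity and well-definedness are comparatively formal consequences of silting-discreteness and the two-term classification, whereas ruling out relations beyond the braid relations needs genuinely new input. My preferred route is via $g$-vectors: silting complexes are determined by their $g$-vectors, mutation acts on them by explicit piecewise-linear maps, and for $\La$ the resulting $g$-vector fan can be identified with the Coxeter fan of $W_{\Delta^{\rm f}}$, the folding encoding the Nakayama symmetry $\iota$ of Theorem \ref{folding}. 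Freeness of the action on the chambers of this fan is then the statement that the associated finite-type hyperplane complement is a $K(\pi,1)$ (Deligne), which says precisely that $B_{\Delta^{\rm f}}$ has no relations beyond those already verified. A more combinatorial alternative would induct on word length using a Garside normal form, analysing the first mutation to show that any nontrivial $a$ with $\bmu_a(\La)\cong\La$ must be reducible, a contradiction. Either route, combined with surjectivity and well-definedness, yields the bijection in (a); together with (b) it gives the final assertion that the derived equivalence class of $\La$ coincides with its Morita equivalence class.
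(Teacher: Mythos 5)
Your opening reduction is false, and the error is fatal to the architecture of the whole proposal. Over a selfinjective algebra it is \emph{not} true that every silting complex is tilting; the correct statement is that the tilting complexes are exactly the $\nu$-stable silting complexes, and $\silt\La=\tilt\La$ holds only when $\La$ is symmetric, i.e.\ when the Nakayama permutation $\iota$ is trivial (types $\type{D}_{2n},\type{E}_7,\type{E}_8$). For types $\type{A}_n$, $\type{D}_{2n+1}$, $\type{E}_6$ --- precisely the cases where the folding is nontrivial --- it fails, as the paper's own Theorem \ref{tau-weyl2} shows: $\tsilt\La$ is in bijection with $W_\Delta$ while $\ttilt\La$ is in bijection with $W_{\Delta^{\rm f}}$, and these sets have different cardinalities whenever $\iota\neq\id$ (for $\type{A}_2$ there are six two-term silting complexes but only two two-term tilting complexes, namely $\La$ and $\La[1]$). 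For the same reason "silting mutation coincides with tilting mutation" is wrong: the irreducible tilting mutation $\bmu_i^{+}$ is by definition a composite of one, two or three irreducible silting mutations $\mu_j^{+}$ along an $\iota$-orbit. This error infects the later steps that lean on it: your surjectivity argument (silting-discreteness, even granted, would only connect $\La$ to other \emph{silting} complexes, and a mutation path need not stay inside $\tilt\La$), and your promotion of the braid relations from two-term complexes to operators on all of $\tilt\La$. Note also that the paper itself does not prove Theorem \ref{tilt-bij} at all; it recalls it from \cite{AM}, where the proof is organized around exactly the distinction you erased --- tilting-discreteness and $\nu$-stable silting theory, with part (b) and tilting-connectedness established by a simultaneous induction through the finiteness of $\ttilt\La\cong W_{\Delta^{\rm f}}$. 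So your induction scheme is structurally reasonable, but only after replacing "silting" by "tilting" (and "silting-discrete" by "tilting-discrete") throughout, which is where the real work lies.

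Injectivity, which you rightly single out as the hardest point, is moreover only sketched, and both proposed routes are inaccurate as stated. The $g$-vector fan of two-term silting complexes of $\La$ is the Coxeter fan of $W_\Delta$, not of $W_{\Delta^{\rm f}}$ (the tilting ones form the $\iota$-invariant part), and in any case $g$-vectors separate only two-term complexes, so one still needs a mechanism --- e.g.\ a Garside normal form matched against a decomposition of an arbitrary tilting complex into two-term slices --- to separate general braid words. Also, freeness of a group action on chambers is not the same statement as the $K(\pi,1)$ property of the complexified arrangement; invoking Deligne here without constructing the relevant covering-space argument leaves the key step unproved. As it stands, the proposal contains a false main lemma and no complete proof of injectivity.
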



\section{Two-sided tilting complexes}\label{sec two-sided}
In this section, we will study two-sided tilting complexes of the preprojective algebra of Dynkin type.
We will construct a fundamental class of two-sided tilting complexes and show that 
they induce irreducible tilting mutation. 
This fact allows us to show that any tilting complexes are obtained as a composition of derived tensor products of these two-sided tilting complexes.

First we give the following set-up.

\begin{notation}\label{main notation}
Let $\Delta$ be a simply-laced Dynkin graph, $\La$ the preprojective algebra of $\Delta$. 
Let $\Delta^{\rm f}$ be the folded graph of $\Delta$. 
Let $\widetilde{\Delta}$ be an extended Dynkin graph obtained from $\Delta$ by adding a vertex $0$ (i.e. $\widetilde{\Delta}_0=\{0\}\cup \Delta_0$) with the associated edges. 
We denote by $\wLa$ the $\mathfrak{m}$-adic completion of the preprojective algebra of $\widetilde{\Delta}$, where $\mathfrak{m}$ is the ideal generated by all arrows. It implies that 
the Krull-Schmidt theorem holds for finitely generated projective $\wLa$-modules.
Moreover we denote by $\I_i:=\wLa(1-e_i)\wLa$, where $e_i$ is the primitive idempotent of $\wLa$ associated with $i\in \widetilde{\Delta}_0$. 
Then for $w\in W_\Delta$, we can define $\I_w$ as Theorem \ref{tau-weyl3}, which is a tilting $\wLa$-module \cite{IR,BIRS}.

Note that, since we have the natural surjection $\wLa\to\La$, 
we have the restriction functor $\Db(\mod\La^{\rm e})\to\Db(\mod(\wLa^{\op}\otimes_K\La))$ and hence $X\in \Db(\mod\La^{\rm e})$ can be regarded as a complex in $\Db(\mod(\wLa^{\op}\otimes_K\La))$. 

Let $\Aut(\La)$ be the group of automorphisms of $\La$. 
For a $\La^{\rm e}$-module $X$ and $\phi,\psi\in\Aut(\La)$, we denote by ${}_\psi X_\phi$ the $\La^{\rm e}$-module whose right action is given by $x\cdot\lambda:=
x\phi(\lambda)$ and left action is given by $\lambda'\cdot x:=\psi(\lambda')x$ for $x\in X$ and $\lambda,\lambda'\in\La$. 
Let $\nu:=D\Hom_\La(-,\La)$ be the Nakayama functor. 
By abuse of notation, we also denote the Nakayama automorphism by $\nu$ so that 
$\nu(\La) \cong {}_1(\La)_\nu\cong  {}_1(\La)_{\nu^{-1}}$ (see \cite[IV.Proposition 3.15]{SY} and \cite[Theorem 4.8]{BBK}). 
\end{notation}

Then we give the following lemma. 

\begin{lemma}\label{homology}
For $w\in W_{\Delta}$, 
we have isomorphisms in $\Db(\mod(\wLa^{\op}\otimes_K\La))$
$$H^0(\I_{w}\Lwotimes\La)\cong I_w,\ H^{-1}(\I_{w}\Lwotimes\La)\cong{}_1(\La/I_w)_\nu\ \textnormal{and}\ H^{j}(\I_{w}\Lwotimes\La)\cong0$$ 
for any $j\neq 0,-1$.  
\end{lemma}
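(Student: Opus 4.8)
The plan is to exploit that $\I_w$ is a tilting $\wLa$-module, hence of projective dimension at most $1$ over $\wLa$ (by \cite{IR,BIRS}), so that the derived tensor product collapses to a two-term complex. Concretely, I would fix a projective resolution $0\to Q_1\to Q_0\to\I_w\to0$ of right $\wLa$-modules and identify $\I_w\Lwotimes\La$ with the complex $[\,Q_1\otimes_{\wLa}\La\to Q_0\otimes_{\wLa}\La\,]$ concentrated in degrees $-1$ and $0$. This immediately yields $H^j(\I_w\Lwotimes\La)=0$ for $j\neq0,-1$, together with $H^0\cong\I_w\otimes_{\wLa}\La$ and $H^{-1}\cong\Tor_1^{\wLa}(\I_w,\La)$, and it reduces the whole statement to computing these two modules as $\wLa$-$\La$-bimodules.

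The first essential observation is that $e_0\in\I_w$. Indeed, writing $\I_w=\I_{i_1}\cdots\I_{i_k}$ with all $i_j\in\Delta_0$ (so $i_j\neq0$), each factor $\I_{i_j}=\wLa(1-e_{i_j})\wLa$ contains $e_0=e_0(1-e_{i_j})e_0$, whence $e_0=e_0^k\in\I_w$. Therefore the kernel $\langle e_0\rangle=\wLa e_0\wLa$ of the surjection $\wLa\to\La$ is contained in $\I_w$, and one checks $\I_w\langle e_0\rangle=\langle e_0\rangle=\I_w\cap\langle e_0\rangle$. This gives at once $\wLa/\I_w\cong\La/I_w$ and
$$\I_w\otimes_{\wLa}\La=\I_w/\I_w\langle e_0\rangle\cong\I_w/(\I_w\cap\langle e_0\rangle)\cong I_w,$$
as $\wLa$-$\La$-bimodules (the left $\wLa$-action factoring through $\La$), proving $H^0\cong I_w$.

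For $H^{-1}$, I would first reindex the Tor. Applying $-\otimes_{\wLa}\La$ to the short exact sequence $0\to\I_w\to\wLa\to\La/I_w\to0$ of right $\wLa$-modules (using $\wLa/\I_w\cong\La/I_w$ from the previous step) and $\Tor_{\geq1}^{\wLa}(\wLa,\La)=0$, the long exact sequence gives $\Tor_1^{\wLa}(\I_w,\La)\cong\Tor_2^{\wLa}(\La/I_w,\La)$. The point of this move is that now both arguments $\La/I_w$ and $\La$ are finite-dimensional $\wLa$-modules (annihilated by $\langle e_0\rangle$). Since $\wLa$ is the completed preprojective algebra of a non-Dynkin (extended Dynkin) graph, its category of finite-dimensional modules is $2$-Calabi--Yau; combining the standard duality $\Tor_i^{\wLa}(M,N)\cong D\Ext^i_{\wLa}(N,DM)$ with the $2$-Calabi--Yau isomorphism $\Ext^2_{\wLa}(\La,D(\La/I_w))\cong D\Hom_{\wLa}(D(\La/I_w),\La)$, I obtain
$$\Tor_2^{\wLa}(\La/I_w,\La)\cong\Hom_{\wLa}(D(\La/I_w),\La)\cong\Hom_{\La}(D(\La/I_w),\La).$$
Finally, since $\La$ is selfinjective the last $\Hom$ is computed using $D\La\cong{}_1(\La)_\nu$: this replaces $D(\La/I_w)$ by $\La/I_w$ and produces precisely a Nakayama twist on the right, giving $H^{-1}\cong{}_1(\La/I_w)_\nu$ (the distinction between $\nu$ and $\nu^{-1}$ being immaterial by ${}_1(\La)_\nu\cong{}_1(\La)_{\nu^{-1}}$).

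I expect the main obstacle to be the last step: while the homological reduction to $\Tor_2^{\wLa}(\La/I_w,\La)$ and the vanishing are formal, keeping track of the full $\wLa$-$\La$-bimodule structure through the Tor--Ext duality and the $2$-Calabi--Yau isomorphism---so that the right $\La$-action emerges correctly twisted by the Nakayama automorphism $\nu$---requires care. If the abstract bookkeeping proves awkward, the alternative is to carry the right $\La$-action as $\La$-linear coefficients throughout by working with an explicit projective bimodule resolution of $\wLa$, making the Nakayama twist visible directly from the outer term of that resolution together with the selfinjective structure of $\La$.
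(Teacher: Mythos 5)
Your proposal is correct and takes essentially the same route as the paper's proof: the projective dimension bound $\mathrm{pd}\,\I_w\leq 1$ (from $\I_w$ being tilting) gives the vanishing and reduces the statement to $H^0\cong\I_w\otimes_{\wLa}\La$ and $H^{-1}\cong\Tor_1^{\wLa}(\I_w,\La)$, which are then identified via Tor--Ext duality, a dimension shift, the $2$-Calabi--Yau duality for $\wLa$, and the identification $D\La\cong{}_1\La_\nu$. The only cosmetic differences are that you perform the dimension shift on the Tor side (obtaining $\Tor_2^{\wLa}(\La/I_w,\La)$) before dualizing, while the paper dualizes first and shifts on the Ext side (obtaining $\Ext^2_{\wLa}(\wLa/\I_w,D\La)$), and your $2$-CY step has the two arguments interchanged, ending at $\Hom_\La(D(\La/I_w),\La)$ instead of the paper's $\Hom_\La(D\La,\La/I_w)$; your more detailed justification of $H^0\cong I_w$ via $e_0\in\I_w$ simply makes explicit what the paper calls immediate from the definition.
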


\begin{proof}
From the definition, we have $H^0(\I_{w}\Lwotimes\La)=\I_w\otimes_{\wLa}\La\cong I_w.$

Moreover, we have 
\begin{eqnarray*}
H^{-1}(\I_{w}\Lwotimes\La)&=&\Tor_1^{\wLa}(\I_{w},\La)\\
&\cong&D\Ext^1_{\wLa}(\I_{w},D\La)\\
&\cong&D\Ext^2_{\wLa}(\wLa/\I_{w},D\La)\\
&\cong&\Hom_{\wLa}(D\La,\wLa/\I_{w})\ \ \ \ \ \ (\textnormal{2-CY duality})\\
&\cong&\Hom_{\La}(D\La,\La/I_{w})\\
&\cong&{}_1(\La/I_w)_\nu.\\
\end{eqnarray*}

Since $\I_{w}$ is a tilting module and hence the projective dimension is at most one, we have $H^{j}(\I_{w}\Lwotimes\La)\cong0$ 
for any $j\neq 0,-1$.  
\end{proof}

For $w\in W_\Delta$, we denote by 
$$B_w:=\La\Lwotimes\I_{w}\Lwotimes\La\in\Db(\mod\La^{\rm e}).$$ 
Then we give the following lemma.

\begin{lemma}\label{homology2}
For $w\in W_{\Delta}$, we have isomorphisms in $\Db(\mod(\wLa^{\op}\otimes_K\La))$
$$H^0(B_w)\cong I_w,\ H^{-1}(B_w)\cong{}_1(\La/I_w)_\nu,\ H^{-2}(B_w)\cong{}_1(I_w)_\nu,\ H^{-3}(B_w)\cong\La/I_w$$ 
and $H^{j}(B_w)\cong0$ 
for any $j\neq 0,-1,-2,-3$.  
\end{lemma}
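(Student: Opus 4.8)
The plan is to compute $B_w=\La\Lwotimes C$, where $C:=\I_w\Lwotimes\La$, by feeding the two nonzero homologies $H^0(C)\cong I_w$ and $H^{-1}(C)\cong{}_1(\La/I_w)_\nu$ from Lemma \ref{homology} into a single auxiliary computation of the functor $\La\Lwotimes(-)$ on $\La$-modules. Concretely, for any $\La$-bimodule $N$ (regarded as a left $\wLa$-module through $\wLa\twoheadrightarrow\La$, so that $N$ is killed by $J:=\ker(\wLa\to\La)=\wLa e_0\wLa$) I would show that $H^{-i}(\La\Lwotimes N)=\Tor_i^{\wLa}(\La,N)$ equals $N$ for $i=0$, vanishes for $i=1$ and for $i\ge3$, and is $\cong{}_1N_\nu$ for $i=2$. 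Here $i=0$ is $\La\otimes_{\wLa}N=N/JN=N$, and $i\ge3$ is clear since $\wLa$ has global dimension two (equivalently, $\Ext^{\ge3}_{\wLa}$ vanishes by the $2$-Calabi--Yau property). For $i=1$ the point is that $J=\wLa e_0\wLa$ is generated by an idempotent, so $e_0=e_0^2\in J^2$ forces $J=J^2$; tensoring $0\to J\to\wLa\to\La\to0$ with $N$ and using $JN=0$ identifies $\Tor_1^{\wLa}(\La,N)\cong J\otimes_{\wLa}N$, which vanishes because each generator $j\otimes n$ with $j=\sum j_1j_2\in J^2$ equals $\sum j_1\otimes j_2n=0$. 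Finally, $i=2$ is obtained by replicating the chain of isomorphisms in Lemma \ref{homology}: shifting through a presentation of $\La$, applying the $2$-Calabi--Yau duality of $\wLa$, and restricting scalars to $\La$ yields $\Tor_2^{\wLa}(\La,N)\cong\Hom_\La(D\La,N)\cong{}_1N_\nu$.

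Granting this, I would assemble $B_w$ from the truncation triangle of $C$. Since $C$ is concentrated in degrees $0$ and $-1$, there is a triangle ${}_1(\La/I_w)_\nu[1]\to C\to I_w\xrightarrow{\kappa}{}_1(\La/I_w)_\nu[2]$, and applying the triangulated functor $\La\Lwotimes(-)$ gives a triangle $\La\Lwotimes({}_1(\La/I_w)_\nu)[1]\to B_w\to\La\Lwotimes I_w\to{}$. By the auxiliary computation the right-hand term has $H^0\cong I_w$ and $H^{-2}\cong{}_1(I_w)_\nu$ and nothing else, while the left-hand term has $H^{-1}\cong{}_1(\La/I_w)_\nu$ and $H^{-3}\cong{}_1(\La/I_w)_{\nu^2}$. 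Reading off the long exact homology sequence immediately gives $H^0(B_w)\cong I_w$ and $H^{-3}(B_w)\cong{}_1(\La/I_w)_{\nu^2}$ (the extremal degrees, unaffected by any connecting map), together with $H^j(B_w)=0$ for $j\notin\{0,-1,-2,-3\}$.

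The first, and main, obstacle is the middle: the long exact sequence contains one potentially nonzero connecting map $\delta\colon H^{-2}(\La\Lwotimes I_w)\to H^{-1}(\La\Lwotimes({}_1(\La/I_w)_\nu)[1])$, that is, a $\La^{\rm e}$-module map ${}_1(I_w)_\nu\to{}_1(\La/I_w)_\nu$, and one has $H^{-2}(B_w)=\Ker\delta$, $H^{-1}(B_w)=\coker\delta$. I must therefore show $\delta=0$. I would attack this either by proving $\Hom_{\La^{\rm e}}(I_w,\La/I_w)=0$ (the only evident bimodule map $I_w\to\La/I_w$, the one induced by $I_w\hookrightarrow\La\twoheadrightarrow\La/I_w$, is zero, and I expect the Loewy structure of these two bimodules to leave no room for a nonzero one), or by exhibiting a self-duality of $B_w$ coming from the $2$-Calabi--Yau property of $\wLa$ and the self-injectivity of $\La$, pairing $H^{-j}(B_w)$ with $H^{-3+j}(B_w)$; such a duality would force the already-settled identifications in degrees $0$ and $-3$ to propagate to degrees $-1$ and $-2$, hence $\delta=0$. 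This vanishing is the step I expect to be delicate.

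The second point is purely bookkeeping: one must normalize the degree-$(-3)$ term, where $H^{-3}(B_w)\cong{}_1(\La/I_w)_{\nu^2}$ must be identified with $\La/I_w$. This holds because $\nu^2$ is inner (as recorded by ${}_1\La_\nu\cong{}_1\La_{\nu^{-1}}$ in Notation \ref{main notation}) and inner automorphisms fix the two-sided ideal $I_w$, so that ${}_1(\La/I_w)_{\nu^2}\cong{}_1(\La/I_w)_1=\La/I_w$; all other terms already appear in the stated form. Once $\delta=0$ and this identification are in hand, the long exact sequence yields precisely the four claimed isomorphisms together with the vanishing of $H^j(B_w)$ for $j\neq0,-1,-2,-3$, completing the proof.
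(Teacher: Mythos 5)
Your skeleton coincides with the paper's own proof: the paper likewise truncates $C=\I_w\Lwotimes\La$ into its two homologies, applies $\La\Lwotimes-$, and reads off the long exact sequence, computing $\Tor_2^{\wLa}(\La,N)\cong{}_1N_\nu$ by $2$-Calabi--Yau duality and killing $\Tor_{\geq3}$ by $\mathrm{gl.dim}\,\wLa\leq 2$. Your $\Tor_1$-vanishing via $J=J^2$ is even a little more self-contained than the paper's (which instead passes through $\Ext^1_{\wLa}(\La,-)\cong\Ext^1_{\La}(\La,-)$ for the idempotent kernel), and your normalization ${}_1(\La/I_w)_{\nu^2}\cong\La/I_w$ via $\nu^2$ being inner makes explicit a point the paper passes over silently. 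All of this part is correct.

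The gap is exactly the step you defer: the vanishing of the connecting map $\delta\colon{}_1(I_w)_\nu\to{}_1(\La/I_w)_\nu$, and neither of your two proposed attacks amounts to a proof. The first cannot work ``softly'': for a general selfinjective algebra and two-sided ideal $I$ one can have $\Hom_{\La^{\rm e}}(I,\La/I)\neq 0$ (already $\La=K[x]/(x^2)$, $I=(x)$ gives $I\cong\La/I$ as bimodules), so no Loewy-structure bookkeeping alone can settle it; a property specific to the ideals $I_w$ of preprojective algebras of Dynkin type is required. That property is what the paper quotes at precisely this point: by \cite[Proposition 4.2]{M}, $(\Fac I_w,\Sub(\La/I_w))$ is a torsion pair in $\mod\La$, hence $\Hom_\La(I_w,\La/I_w)=0$ already as right $\La$-modules, which kills $\delta$ because $\delta$ is in particular a right $\La$-module map $I_w\to\La/I_w$ (the identical $\nu$-twists on the right of source and target cancel). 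Your second route, a self-duality of $B_w$ pairing $H^{-j}$ with $H^{j-3}$, is not constructed and would in any case need compatibility with the identifications already fixed in degrees $0$ and $-3$; nothing of the sort is used in the paper. So: right decomposition, right Tor computations, but the load-bearing input --- the torsion-pair theorem for $I_w$ --- is missing, and without it the middle of the long exact sequence is not determined.
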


\begin{proof}
We write $X:=\I_{w}\Lwotimes\La$, 
$X^{-1}:=H^{-1}(X)$ and  $X^{0}:=H^{0}(X)$ for simplicity.

Take the canonical triangle 

$$\xymatrix@C30pt@R30pt{\cdots\ar[r]&\sigma^{\leq-1}(X)\ar[r]^{}& X\ar[r]^{ }&\sigma^{\geq0}(X)\ar[r]& (\sigma^{\leq-1}X)[1]\ar[r]&\cdots,}$$ 
where $\sigma$ denotes by the truncation functor.

By Lemma \ref{homology}, we can write it as
$$\xymatrix@C30pt@R30pt{\cdots\ar[r]&X^{-1}[1]\ar[r]^{}& X\ar[r]^{ }&X^0\ar[r]& X^{-1}[2]\ar[r]&\cdots.}$$ 
Then, applying the functor $\La\Lwotimes-$ to the triangle, we have the following triangle 
$$\xymatrix@C30pt@R30pt{\cdots\ar[r]&\La\Lwotimes X^{-1}[1]\ar[r]^{}&\La\Lwotimes X\ar[r]^{ }&\La\Lwotimes X^0\ar[r]&\La\Lwotimes  X^{-1}[2]\ar[r]&\cdots.}$$

Taking the homology, we have the following long exact sequence 

\[
\xymatrix@C15pt@R5pt
{ 0 \ar[r]^{} &  H^{-3}(\La\Lwotimes X^{-1}[1])\ar[r] &H^{-3}(\La\Lwotimes X)   \ar[r] &H^{-3}(\La\Lwotimes X^0)\ar[r] &\\
 \ar[r]^{} &  H^{-2}(\La\Lwotimes X^{-1}[1])\ar[r] &H^{-2}(\La\Lwotimes X)   \ar[r] &H^{-2}(\La\Lwotimes X^0)\ar[r] &\\
\ar[r]^{f\ \ \ \ \ \ \ \ \ \ \ \ } &  H^{-1}(\La\Lwotimes X^{-1}[1])\ar[r] &H^{-1}(\La\Lwotimes X)   \ar[r] &H^{-1}(\La\Lwotimes X^0)\ar[r] &\\
\ar[r]  &    H^0(\La\Lwotimes X^{-1}[1]) \ar[r] & H^0(\La\Lwotimes X)\ar[r]  &H^0(\La\Lwotimes X^0) \ar[r]  &  0. }
\]

Then we have 

\begin{eqnarray*}
H^{-3}(\La\Lwotimes X^{-1}[1])&=&H^{-2}(\La\Lwotimes X^{-1})\\
&\cong&\Tor_2^{\wLa}(\La,X^{-1})\\
&\cong&D\Ext^2_{\wLa}(\La,D(X^{-1}))\\
&\cong&\Hom_{\wLa}(D(X^{-1}),\La) \\
&\cong&\Hom_{\La}(D(X^{-1}),\La)\\
&\cong&{}_1(X^{-1})_\nu.
\end{eqnarray*}

Similarly, we have 
$H^{-2}(\La\Lwotimes X^{0})\cong{}_1(X^0)_\nu.$ 

On the other hand, we have 
\begin{eqnarray*}
H^{-2}(\La\Lwotimes X^{-1}[1])&=&H^{-1}(\La\Lwotimes X^{-1})\\
&\cong&\Tor_1^{\wLa}(\La,X^{-1})\\
&\cong&D\Ext^1_{\wLa}(\La,D(X^{-1}))\\
&\cong&D\Ext^1_{\La}(\La,D(X^{-1}))\\
&\cong&0.
\end{eqnarray*}
Similarly, we have $H^{-1}(\La\Lwotimes X^0)\cong0$. 
Moreover, since $\mathrm{gl.dim}\wLa\leq 2$ (\cite[Proposition II.1.3]{BIRS}), we get $H^{-3}(\La\Lwotimes X^0)\cong0$ and $H^0(\La\Lwotimes X^{-1}[1])\cong0$.  

Thus Lemma \ref{homology} implies 
$H^{-2}(\La\Lwotimes X^{0})\cong{}_1(I_w)_\nu$ and $H^{-1}(\La\Lwotimes X^{-1}[1])\cong H^{0}(\La\Lwotimes X^{-1})\cong {}_1(\La/I_w)_\nu$. 
Since $(\Fac I_w,\Sub(\La/I_w))$ is a torsion pair \cite[Proposition 4.2]{M}, 
we have $f=0$. 
Consequently, we have $H^0(\La\Lwotimes X)\cong H^{0}(\La\Lwotimes X^{0})\cong I_w$, 
$H^{-1}(\La\Lwotimes X)\cong {}_1(\La/I_w)_\nu$, $H^{-2}(\La\Lwotimes X)\cong {}_1(I_w)_\nu$ and  $H^{-3}(\La\Lwotimes X)\cong \La/I_w$. 
Therefore Lemma \ref{homology} shows the assertion. 
\end{proof}

Then by Lemma \ref{homology} and \ref{homology2}, we obtain the following result.

\begin{proposition}\label{quasi-iso}
For $w\in W_{\Delta}$, we have an isomorphism in $\Db(\mod(\wLa^{\op}\otimes_K\La))$
$$\sigma^{\geq -1}(B_w)\cong\I_{w}\Lwotimes\La.$$
\end{proposition}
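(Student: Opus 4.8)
The plan is to exhibit an explicit morphism realizing the claimed isomorphism, rather than merely matching homology groups. Write $X:=\I_{w}\Lwotimes\La$. Comparing Lemma \ref{homology} with Lemma \ref{homology2}, the complex $X$ has homology concentrated in degrees $0$ and $-1$, equal to $I_w$ and ${}_1(\La/I_w)_\nu$, and these coincide with $H^0(B_w)$ and $H^{-1}(B_w)$; the extra homology of $B_w$ sits in degrees $-2,-3$, which is exactly what $\sigma^{\geq-1}$ discards. So the real content is to produce a map inducing these identifications. The surjection $\pi\colon\wLa\to\La$ is a homomorphism of $\wLa$-bimodules, so $\pi\Lwotimes\id_X$ together with the canonical identification $\wLa\Lwotimes X\cong X$ gives a natural morphism
\[
\eta\colon X\longrightarrow \La\Lwotimes X=B_w
\]
in $\Db(\mod(\wLa^{\op}\otimes_K\La))$ — the unit of the derived extension–restriction adjunction along $\pi$. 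Composing with the truncation map $B_w\to\sigma^{\geq-1}(B_w)$ produces the candidate $\Phi\colon X\to\sigma^{\geq-1}(B_w)$.

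Since both $X$ and $\sigma^{\geq-1}(B_w)$ have homology concentrated in degrees $0$ and $-1$, and the truncation map is an isomorphism on $H^0$ and $H^{-1}$, it suffices to check that $\eta$ induces isomorphisms on $H^0$ and $H^{-1}$; then $\Phi$ is a quasi-isomorphism, hence an isomorphism in the derived category. The basic input is the elementary fact that for any $\La$-module $M$, viewed as a $\wLa$-module through $\pi$, the action of $\ker\pi$ is trivial and so $\La\otimes_{\wLa}M\cong M$; consequently $\eta_M$ is an isomorphism on $H^0$. Both $X^0:=H^0(X)\cong I_w$ and $X^{-1}:=H^{-1}(X)\cong{}_1(\La/I_w)_\nu$ are genuine $\La$-modules, so this applies to each of them.

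To transfer this to $X$ itself, I would use naturality of $\eta$ with respect to the canonical truncation triangle
\[
X^{-1}[1]\longrightarrow X\longrightarrow X^0\longrightarrow X^{-1}[2]
\]
already employed in the proof of Lemma \ref{homology2}. Applying $\eta$ yields a morphism from this triangle to its image under $\La\Lwotimes-$, hence a comparison of the two associated long exact homology sequences, in which the vertical maps on the $X^0$- and $X^{-1}$-pieces are the isomorphisms $\eta_{X^0},\eta_{X^{-1}}$ on $H^0$ from the previous paragraph. Tracing degrees, $H^0(X)\cong H^0(X^0)$ maps isomorphically to $H^0(\La\Lwotimes X^0)\cong H^0(B_w)$, so $H^0(\eta)$ is an isomorphism; and $H^{-1}(X)\cong H^0(X^{-1})$ maps isomorphically to $H^0(\La\Lwotimes X^{-1})\cong H^{-1}(B_w)$, so $H^{-1}(\eta)$ is an isomorphism. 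The point requiring care — and which I expect to be the main obstacle — is that the identification $H^{-1}(B_w)\cong{}_1(\La/I_w)_\nu$ be compatible with $\eta$: this rests on the vanishing of the connecting map $f\colon H^{-2}(\La\Lwotimes X^0)\to H^{-1}(\La\Lwotimes X^{-1}[1])$ from the proof of Lemma \ref{homology2}, i.e. on the torsion-pair argument $(\Fac I_w,\Sub(\La/I_w))$. Granting $f=0$ (and $H^{-1}(\La\Lwotimes X^0)\cong0$), the bottom row of the comparison diagram identifies $H^{-1}(B_w)$ with $H^0(\La\Lwotimes X^{-1})\cong X^{-1}$ functorially and compatibly with the top row, so the commuting square forces $H^{-1}(\eta)$ to be an isomorphism. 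This completes the reduction and yields the desired quasi-isomorphism $\Phi$.
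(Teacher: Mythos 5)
Your proof is correct, and the morphism you construct is literally the same one the paper uses: in the paper's proof, the map $X\cong\wLa\Lwotimes X\to\La\Lwotimes X=B_w$ arises as the middle arrow of the triangle obtained by applying $-\Lwotimes X$ to the short exact sequence $0\to\langle e_0\rangle\to\wLa\to\La\to0$, and this arrow is exactly your unit $\eta=\pi\Lwotimes\mathrm{id}_X$; both proofs then compose with the truncation map and check $H^0$ and $H^{-1}$. Where you genuinely diverge is in the verification. The paper keeps the third term $Y\Lwotimes X$ (with $Y=\langle e_0\rangle=\I_{w_0}$) in play, computes $H^0(Y\Lwotimes X)\cong\langle e_0\rangle\otimes_{\wLa}(\I_w/\langle e_0\rangle)\cong0$, deduces from the long exact sequence that $g_0$ is an isomorphism and $g_{-1}$ is surjective, and then upgrades $g_{-1}$ to an isomorphism indirectly: by the statements of Lemmas \ref{homology} and \ref{homology2} its source and target are both the finite-dimensional module ${}_1(\La/I_w)_\nu$, so the surjection is bijective. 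You never introduce $Y$; instead you use naturality of the unit along the truncation triangle of $X$, reduce to the elementary isomorphism $\La\otimes_{\wLa}M\cong M$ for $\La$-modules $M$, and then need the bottom-row comparison maps themselves to be isomorphisms — which, as you correctly flag, cannot be extracted from the statement of Lemma \ref{homology2} alone but requires reopening its proof for $f=0$ (the torsion-pair argument) and the vanishings $H^{-1}(\La\Lwotimes X^0)\cong0\cong H^0(\La\Lwotimes X^{-1}[1])$. The trade-off is clear: the paper's route is shorter and cites only lemma statements, but its key isomorphism is obtained by a dimension count rather than by exhibiting compatibility; your route avoids the auxiliary ideal and any finite-dimensionality argument, makes all identifications functorial (so it would survive in settings where dimension counting is unavailable), at the price of depending on the internal steps of Lemma \ref{homology2}'s proof.
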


\begin{proof}
Let $Y:=\langle e_0\rangle=\wLa e_0\wLa$. 
Take a short exact sequence 
$\xymatrix@C30pt@R30pt{0\ar[r]&Y\ar[r]^{}& \wLa \ar[r]& \La\ar[r]&0.}$

We let $X:=\I_{w}\Lwotimes\La$. 
Then, applying the functor $-\Lwotimes X$ to the exact sequence, we have the triangle 
\[\tag{1}\label{tri seq}\xymatrix@C30pt@R30pt{\cdots\ar[r]&Y\Lwotimes X\ar[r]^{}&\wLa\Lwotimes X\ar[r]^{}&\La\Lwotimes X\ar[r]&Y\Lwotimes X[1]\ar[r]&\cdots.}\]  
Then we will show the composition of the morphisms 
$$\varphi:\xymatrix@C30pt@R10pt{\wLa\Lwotimes X(\cong X)\ar[r]^{  }&\La\Lwotimes X\ar[r]^{\sigma^{\geq -1}\ \ \ \ }&\sigma^{\geq -1} (\La\Lwotimes X)}$$
is an isomorphism in $\Db(\mod(\wLa^{\op}\otimes_K\La))$.

By taking the homology of the sequence (\ref{tri seq}), we have the following long exact sequence

\[
\xymatrix@C15pt@R5pt
{ \cdots \ar[r] &  H^{-1}(Y\Lwotimes X )\ar[r] &H^{-1}(X)   \ar[r]^{g_{-1}\ \ } &H^{-1}(\La\Lwotimes X)\ar[r] &\\
\ar[r]  &    H^0(Y\Lwotimes X) \ar[r] & H^0(X)\ar[r]^{g_{0}\ \ }  &H^0(\La\Lwotimes X) \ar[r]  &  0. }
\]

Then we have 
\begin{eqnarray*}
H^0(Y\Lwotimes X)&\cong &Y\otimes_{\wLa} \I_{w}\otimes_{\wLa} \La\\
&\cong&\langle e_0\rangle\otimes_{\wLa} \I_{w}/\langle e_0\rangle\\
&\cong&0.
\end{eqnarray*}

Then, by the above exact sequence together with Lemmas \ref{homology} and \ref{homology2},  
$g_{-1}$ and $g_{0}$ are isomorphisms. 
Thus we get the conclusion.
\end{proof}

Now we recall the following definition (we refer to \cite{Ric2} for details). 

\begin{definition}\label{def two-sided}
Let $A$ and $B$ be finite dimensional algebras. 
If a complex $T$ of $(B^\op\otimes_K A)$-modules satisfies the following equivalent conditions, 
then we call $T$ a \emph{two-sided tilting complex}.
\begin{itemize}
\item[(i)] $T$ is a tilting complex of $A$ and the left multiplication morphism $B\to\RHom_A(T,T)$ is an isomorphism in $\Db(\mod B^{\rm e})$.
\item[(ii)] $T$ is a tilting complex of $B$ and the right multiplication morphism $A\to\RHom_{B^\op}(T,T)$ is an isomorphism in $\Db(\mod A^{\rm e})$.
\item[(iii)] 
$T$ is biperfect (i.e. $T\in \Kb(\proj A)$ and $T\in \Kb(\proj B^{\op})$) and 
there exists a biperfect complex $U$ of $(A^\op\otimes_K B)$-modules such that  
$$U\otimes_B^\mathbf{L} T\cong A\ \textnormal{in}\ \Db(\mod A^{\rm e})\ \textnormal{and}\ T\otimes_A^\mathbf{L} U\cong B\ \textnormal{in}\ \Db(\mod B^{\rm e}).$$
\end{itemize}

In this case, we have $U$ 
and denote it by $T^{-1}$. 
The functor $-\otimes_B^\mathbf{L} T$ is called a \emph{standard functor} \cite{Ric2} and it gives an equivalence between $\Db(\mod B)$ and $\Db(\mod A)$.
\end{definition}

For any $i\in \Delta_0^{\rm f}$, define $t_i$ as (\ref{fold}) of Theorem \ref{folding}. 
We denote by 
$$T_i:=\sigma^{\geq -1}(B_{t_i})=\sigma^{\geq -1}(\La\Lwotimes\I_{t_i}\Lwotimes\La)\ \in\Db(\mod \La^{\rm e}).$$

\begin{lemma}\label{two-sided lemma}
For any $i\in \Delta_0^{\rm f}$, 
we have an isomorphism in $\Db(\mod\La)$ 
$$T_i\cong\bmu_i^+(\La).$$ 
\end{lemma}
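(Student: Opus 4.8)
The plan is to compute the homology of $T_i = \sigma^{\geq -1}(B_{t_i})$ explicitly and identify the resulting complex of $\La$-modules with the two-term tilting complex $\bmu_i^+(\La)$ described in subsection \ref{some results}. By Proposition \ref{quasi-iso} we already know $T_i \cong \I_{t_i} \Lwotimes \La$ in $\Db(\mod(\wLa^\op \otimes_K \La))$, and by Lemma \ref{homology} this complex has exactly two nonzero homologies, $H^0 \cong I_{t_i}$ and $H^{-1} \cong {}_1(\La/I_{t_i})_\nu$. So as an object of $\Db(\mod\La)$, $T_i$ is a two-term complex whose zeroth homology is the ideal $I_{t_i}$ and whose $(-1)$st homology is $\La/I_{t_i}$ (up to the Nakayama twist, which disappears when we forget the bimodule structure and remember only the right $\La$-action). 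The goal is to match this against the explicit two-term shape of $\bmu_i^+(\La)$ recalled before Theorem \ref{tau-weyl2}.

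First I would identify the relevant module $I_{t_i}$ concretely. Using Theorem \ref{tau-weyl3}, $I_{t_i} = I_{i_1}\cdots I_{i_k}$ for the reduced expression of $t_i$ given in \eqref{fold}; in each of the three cases ($t_i = s_i$, $s_i s_{\iota(i)} s_i$, or $s_i s_{\iota(i)}$) one checks that $I_{t_i}$ is precisely the ideal $\La(1 - e_i^\iota)\La$, since $e_i^\iota = e_i$ in the first case and $e_i + e_{\iota(i)}$ in the other two, and the product of the corresponding $I_j$'s collapses to the ideal generated by $1 - e_i^\iota$ because $i$ and $\iota(i)$ are related exactly as in the definition of $e_i^\iota$. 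Consequently $\La/I_{t_i} \cong \La/\La(1-e_i^\iota)\La$, which is supported on the vertices $i$ (and $\iota(i)$), exactly the summand $e_i^\iota\La$ that gets mutated in $\bmu_i^+$.

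Next I would pin down the actual complex, not just its homology. The cleanest route is to take a minimal projective presentation realizing $\sigma^{\geq -1}(\I_{t_i}\Lwotimes\La)$ as a two-term complex in $\Kb(\proj\La)$ concentrated in degrees $-1$ and $0$. Since $\I_{t_i}$ is a tilting $\wLa$-module of projective dimension at most one, $\I_{t_i}\Lwotimes\La$ is represented by a length-one complex of projectives; restricting to the right $\La$-action and truncating, one obtains a complex of the form $e_i^\iota\La \oplus (1-e_i^\iota)\La$ in degree $-1$ mapping to a projective in degree $0$. I would then argue that the differential is forced to be a minimal left $\add((1-e_i^\iota)\La)$-approximation: the summand $(1-e_i^\iota)\La$ must survive (as it appears in both $\bmu_i^+(\La)$ and the homology $\La/I_{t_i}$ is supported away from it), and matching $H^0 = I_{t_i} = \La(1-e_i^\iota)\La$ and $H^{-1} = \La/I_{t_i}$ with the cokernel and kernel of the approximation map forces the two complexes to coincide. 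Alternatively, and perhaps more robustly, I would invoke Theorem \ref{tilt-bij}(a): both $T_i$ and $\bmu_i^+(\La)$ are basic tilting complexes, so it suffices to show they have the same class in the bijection $B_{\Delta^{\rm f}} \to \tilt\La$, which reduces to matching homology and the support of the complex.

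The main obstacle will be the last identification step, namely verifying that the differential of the truncated complex is genuinely the minimal left $\add((1-e_i^\iota)\La)$-approximation rather than merely having the correct homology — a priori two two-term complexes with the same homology in degrees $0$ and $-1$ need not be isomorphic in $\Kb(\proj\La)$. I expect the resolution is to use that $\bmu_i^+(\La)$ is, by construction before Theorem \ref{tau-weyl2}, the mutation $\mu^+_{(e_i^\iota\La)}(\La)$ sitting in a triangle $e_i^\iota\La \to R \to Y \to$, and that the torsion-pair structure $(\Fac I_{t_i}, \Sub(\La/I_{t_i}))$ from \cite[Proposition 4.2]{M} — already used in the proof of Lemma \ref{homology2} — rigidifies the complex so that its homology determines it uniquely among two-term tilting complexes. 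With $I_{t_i}$ identified as $\La(1-e_i^\iota)\La$ and the Nakayama twist harmless on the right module structure, the isomorphism $T_i \cong \bmu_i^+(\La)$ in $\Db(\mod\La)$ then follows.
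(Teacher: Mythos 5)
Your opening move coincides with the paper's: Proposition \ref{quasi-iso} gives $T_i\cong\I_{t_i}\Lwotimes\La$ in $\Db(\mod\La)$. But where the paper finishes in one line by citing \cite[Proposition 5.2]{AM}, which states exactly that $\I_{t_i}\Lwotimes\La\cong\bmu_i^+(\La)$, you try to recover this isomorphism from the homology computed in Lemma \ref{homology}, and that final identification has a genuine gap. As you concede yourself, a two-term complex in $\Kb(\proj\La)$ is not determined by $H^0$ and $H^{-1}$; the missing input is rigidity, $\Hom_{\Kb(\proj\La)}(T_i,T_i[1])=0$, after which one could legitimately appeal to the classification of two-term silting complexes by their $H^0$ (or $g$-vectors) from \cite{AIR}. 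Neither of your proposed fixes supplies this. Asserting that ``both $T_i$ and $\bmu_i^+(\La)$ are basic tilting complexes'' and invoking Theorem \ref{tilt-bij} is circular in the paper's logical order: that $T_i$ is a tilting complex of $\La$ is deduced \emph{from} this lemma --- the proof of Proposition \ref{two-sided} begins ``From Theorem \ref{tau-weyl2} and Lemma \ref{two-sided lemma}, $T_i$ is a tilting complex of $\La$'' --- so it cannot be assumed while proving it. And the torsion pair $(\Fac I_w,\Sub(\La/I_w))$ does not ``rigidify'' anything: in Lemma \ref{homology2} it is used solely to show that one connecting morphism in a long exact sequence vanishes; it gives no uniqueness statement for complexes with prescribed homology. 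A non-circular repair would be to prove rigidity first (note that the bimodule computation $\La\cong\RHom_\La(\I_{t_i}\Lwotimes\La,\I_{t_i}\Lwotimes\La)$ in the proof of Proposition \ref{two-sided} relies only on Proposition \ref{quasi-iso}, \cite{BIRS} and \cite[Lemma 5.3, Proposition 5.4]{AM}, not on this lemma), or simply to cite \cite[Proposition 5.2]{AM} as the paper does; that result identifies the two complexes by computing the projective presentation of $\I_{t_i}$ explicitly, i.e.\ by pinning down the differential, not merely the homology.

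Two smaller points. Your claim that the Nakayama twist ``disappears when we forget the bimodule structure'' is false in general: as a right module, $(\La/I_w)_\nu$ need not be isomorphic to $\La/I_w$; for instance $w=s_i$ with $\iota(i)\neq i$ gives $\La/I_i\cong S_i$ while $(S_i)_\nu\cong S_{\iota(i)}$. The twist is harmless here only because $\iota(t_i)=t_i$, hence $\nu(I_{t_i})=I_{t_i}$ and $\nu$ itself induces a right-module isomorphism $(\La/I_{t_i})_\nu\cong\La/I_{t_i}$; this justification has to be supplied. Similarly, the identity $I_{t_i}=\La(1-e_i^\iota)\La$ is correct but not immediate: in the case where there is an edge between $i$ and $\iota(i)$, collapsing the product $I_iI_{\iota(i)}I_i$ requires the preprojective relation at these vertices, which is the actual content hidden behind ``one checks''.
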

\begin{proof}
By Proposition \ref{quasi-iso}, we have $T_i\cong\I_{t_i}\Lwotimes\La$ in $\Db(\mod\La)$.
On the other hand, by \cite[Proposition 5.2]{AM}, we have $\I_{t_i}\Lwotimes\La\cong\bmu_i^+(\La)$. 
\end{proof}

Then we show that $T_i$ gives a two-sided tilting complex. 

\begin{proposition}\label{two-sided}
For any $i\in \Delta_0^{\rm f}$, $T_i$ is a two-sided tilting complex. 
\end{proposition}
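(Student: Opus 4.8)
The plan is to verify condition (i) of Definition~\ref{def two-sided}, namely that $T_i$ is a tilting complex of $\La$ together with the statement that the left multiplication map $\La\to\RHom_\La(T_i,T_i)$ is an isomorphism in $\Db(\mod\La^{\rm e})$. The tilting property comes for free: by Lemma~\ref{two-sided lemma} we have an isomorphism $T_i\cong\bmu_i^+(\La)$ in $\Db(\mod\La)$, and $\bmu_i^+(\La)$ is a two-term tilting complex obtained by irreducible left tilting mutation as recorded in subsection~\ref{some results}. Thus $T_i$ is automatically a tilting complex of $\La$, and in fact Theorem~\ref{tilt-bij}(b) already tells us that $\End_{\Kb(\proj\La)}(T_i)\cong\La$ as an algebra, so the endomorphism ring has the correct dimension and the right-hand side of the multiplication map is concentrated in degree~$0$.

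The content therefore lies entirely in identifying the $\La$-bimodule structure: I must show that the canonical algebra map $\La\to\End_{\Db(\mod\La)}(T_i)=H^0(\RHom_\La(T_i,T_i))$ is an isomorphism of \emph{bimodules}, not merely of algebras, and that $\RHom_\La(T_i,T_i)$ is concentrated in degree zero. The degree-zero concentration is immediate from the tilting condition $\Hom_{\Kb(\proj\La)}(T_i,T_i[j])=0$ for $j\neq 0$. For the bimodule identification, the natural strategy is to work with the explicit realization $T_i\cong\sigma^{\geq -1}(\La\Lwotimes\I_{t_i}\Lwotimes\La)$ living in $\Db(\mod\La^{\rm e})$, so that $T_i$ genuinely carries a two-sided structure from the start rather than only a one-sided one. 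I would compute the left multiplication morphism $\La\to\RHom_\La(T_i,T_i)$ as a map in $\Db(\mod\La^{\rm e})$ and check it is a quasi-isomorphism; since both sides are already known to have isomorphic $H^0$ as algebras and vanishing higher cohomology, it suffices to check that this particular comparison map induces an isomorphism on $H^0$.

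The main obstacle will be controlling the bimodule structure through the truncation $\sigma^{\geq -1}$ and the derived tensor products over the two different algebras $\wLa$ and $\La$. Proposition~\ref{quasi-iso} gives the clean identification $T_i\cong\I_{t_i}\Lwotimes\La$ in $\Db(\mod(\wLa^{\op}\otimes_K\La))$, which is what lets me compute $\RHom$ using the tilting $\wLa$-module $\I_{t_i}$; the difficulty is that this isomorphism is only asserted over $\wLa^{\op}\otimes_K\La$, whereas I need bimodule statements over $\La^{\rm e}$. The cleanest route is probably to transport the problem to $\wLa$: since $\I_{t_i}$ is a tilting $\wLa$-module with $\End_{\wLa}(\I_{t_i})\cong\wLa$ (it is the fundamental tilting module of a preprojective algebra, so the self-injectivity/$2$-Calabi--Yau formalism of \cite{IR,BIRS} applies), the left multiplication map $\wLa\to\RHom_{\wLa}(\I_{t_i},\I_{t_i})$ is a bimodule isomorphism, and I would then apply $\La\Lwotimes(-)\Lwotimes\La$ and use adjunction together with the homology computations of Lemmas~\ref{homology} and~\ref{homology2} to descend to a bimodule isomorphism $\La\to\RHom_\La(T_i,T_i)$ over $\La^{\rm e}$. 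Keeping track of the Nakayama-twisted terms ${}_1(\La/I_w)_\nu$ and ${}_1(I_w)_\nu$ that appear in the lower homology, and verifying that they are exactly the pieces removed by $\sigma^{\geq -1}$ so that they do not contaminate the degree-zero comparison, is the delicate point that I expect to require the most care.
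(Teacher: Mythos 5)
Your setup is sound and agrees with the paper up to a point: you verify condition (i) of Definition~\ref{def two-sided}, get the one-sided tilting property from Lemma~\ref{two-sided lemma}, and correctly identify that the real content is showing that the specific left-multiplication map $\La\to\RHom_\La(T_i,T_i)$ is an isomorphism, starting from the bimodule isomorphism $\wLa\cong\RHom_{\wLa}(\I_{t_i},\I_{t_i})$ of \cite{BIRS}. The gap is in your descent step. If you apply $\La\Lwotimes(-)\Lwotimes\La$ to that isomorphism, the source becomes $\La\Lwotimes\wLa\Lwotimes\La\cong\La\Lwotimes\La$, and this is \emph{not} $\La$: by the paper's own Lemma~\ref{homology2} applied to $w=e$ (equivalently, by the sequence $0\to\langle e_0\rangle\to\wLa\to\La\to0$ and $2$-CY duality), $\La\Lwotimes\La$ has nonvanishing homology ${}_1(\La)_\nu$ in degree $-2$. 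So this functor does not produce a map out of $\La$ at all; cancelling the spurious degree $-2$ term against $\RHom_{\wLa}(\I_{t_i},\sigma^{\leq -2}B_{t_i})$ and then checking that the degree-zero comparison is still left multiplication is precisely the ``delicate point'' you defer, and it is the whole difficulty, not a verification.

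What is missing is the paper's key ingredient, which makes all of this bookkeeping unnecessary: a commutation isomorphism $r:\I_{t_i}\Lwotimes\La\cong\La\Lwotimes\I_{t_i}$, obtained in \cite[Lemma 5.3, Proposition 5.4]{AM} by applying $\I_{t_i}\Lwotimes-$ and $-\Lwotimes\I_{t_i}$ to $0\to\I_{w_0}\to\wLa\to\La\to0$ (where $\I_{w_0}=\langle e_0\rangle$ for the longest element $w_0$ of $W_\Delta$) and using that $\I_{t_i}$ commutes with $\I_{w_0}$; this last fact rests on $\iota(t_i)=t_i$, i.e.\ exactly on the definition of $t_i$ through Nakayama orbits, which plays no role in your argument. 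With $r$ in hand, the paper tensors with $\La$ on \emph{one} side only, so the source stays equal to $\La$ throughout:
\begin{equation*}
\La\cong\La\Lwotimes\RHom_{\wLa}(\I_{t_i},\I_{t_i})\cong\RHom_{\wLa}(\I_{t_i},\La\Lwotimes\I_{t_i})
\overset{r}{\cong}\RHom_{\wLa}(\I_{t_i},\I_{t_i}\Lwotimes\La)\cong\RHom_{\La}(\I_{t_i}\Lwotimes\La,\I_{t_i}\Lwotimes\La),
\end{equation*}
the last step being the extension--restriction adjunction along $\wLa\to\La$, after which Proposition~\ref{quasi-iso} identifies the target with $\RHom_\La(T_i,T_i)$, with no truncation analysis needed. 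Without the isomorphism $r$ your adjunction chain cannot be closed: $\RHom_{\wLa}(\I_{t_i},\La\Lwotimes\I_{t_i})$, which is what one-sided tensoring naturally produces, has no evident relation to $T_i$, and two-sided tensoring produces the wrong source.
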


\begin{proof}
We show the condition (i) of Definition \ref{def two-sided}. 
From Theorem \ref{tau-weyl2} and Lemma \ref{two-sided lemma}, $T_i$ is a tilting complex of $\La$. 
Then, we will show that the left multiplication $\La\to\RHom_\La(T_i,T_i)$ is an isomorphism in $\Db(\mod \La^{\rm e})$.

We recall some results from \cite[Lemma 5.3, Proposition 5.4]{AM}. 
Let $w_0$ be the longest element of $W_\Delta$. 
Since $\I_{w_0}=\langle e_0\rangle$, we have the following exact sequence
$$\xymatrix@C30pt@R30pt{0\ar[r]&\I_{w_0}\ar[r]^{}& \wLa \ar[r]& \La\ar[r]&0.}$$ 
Then applying the functors $\I_{t_i}\Lwotimes-$ and $-\Lwotimes\I_{t_i}$ to the exact sequence, we have the 
following commutative diagram  

$$\xymatrix@C25pt@R15pt{\I_{t_i}\Lwotimes\I_{w_0}\ar[r]^{}\ar[d]_{\cong}&\I_{t_i}\Lwotimes\wLa\ar[r]^{ }\ar[d]_{\cong}&\I_{t_i}\Lwotimes\La\ar[r]\ar@{.>}[d]^{r}&\I_{t_i}\Lwotimes\I_{w_0}[1]\ar[d]_{\cong}\\
\I_{w_0}\Lwotimes\I_{t_i}\ar[r]^{}&\wLa\Lwotimes\I_{t_i}\ar[r]^{ }&\La\Lwotimes\I_{t_i}\ar[r]&\I_{w_0}\Lwotimes\I_{t_i}[1],}$$ 
 and the isomorphism $r$ by \cite[Lemma 5.3, Proposition 5.4]{AM}.

On the other hand, $\I_{t_i}$ is a two-sided tilting complex  and the left multiplication gives an isomorphism $\wLa\cong\Hom_{\wLa}(\I_{t_i},\I_{t_i})$ \cite[section II.1]{BIRS}. Then,  
we obtain  
\begin{eqnarray*}
\La&\cong&\La\Lwotimes\wLa\\ 
&\cong&\La\Lwotimes\RHom_{\wLa}(\I_{t_i},\I_{t_i})\\
&\cong&\RHom_{\wLa}(\I_{t_i},\La\Lwotimes\I_{t_i})\\
&\cong&\RHom_{\wLa}(\I_{t_i},\I_{t_i}\Lwotimes\La)\\ 
&\cong&\RHom_{\La}(\I_{t_i}\Lwotimes\La,\I_{t_i}\Lwotimes\La),
\end{eqnarray*}
and 
Proposition \ref{quasi-iso} gives an isomorphism $\RHom_{\La}(T_i,T_i)\cong\RHom_{\La}(\I_{t_i}\Lwotimes\La,\I_{t_i}\Lwotimes\La)$.  
Then we have the isomorphism $\La\to\RHom_\La(T_i,T_i)$  given by the left multiplication.  
\end{proof}

Moreover we will show that these tilting complexes satisfy braid relations. 

For this purpose, we recall the following result \cite[Proposition II.1.5, Proposition II.1.10]{BIRS}(\cite[Proposition 6.1, Theorem 6.5]{IR}).

\begin{proposition}\label{affine braid}
Let $w,v \in W_{\widetilde{\Delta}}$. If $\ell(wv)=\ell(w)+\ell(v)$, 
then we have isomorphisms in $\Db(\mod{\wLa}^{\rm e})$
$$\I_w\Lwotimes\I_v\cong\I_w\otimes_{\wLa}\I_v\cong\I_{wv}.$$
\end{proposition}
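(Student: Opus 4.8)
The plan is to establish the isomorphism $\I_w \Lwotimes \I_v \cong \I_{wv}$ for $w,v \in W_{\widetilde{\Delta}}$ with $\ell(wv) = \ell(w) + \ell(v)$, which reduces essentially to showing that the derived tensor product collapses to the ordinary tensor product and that the latter computes the ideal $\I_{wv}$. The key structural input is that each $\I_i = \wLa(1-e_i)\wLa$ has projective dimension at most one as a $\wLa$-module (since $\wLa$ is $2$-Calabi--Yau of global dimension $\le 2$, being the completed preprojective algebra of an extended Dynkin type), and that the length-additivity condition $\ell(wv) = \ell(w)+\ell(v)$ is precisely what makes a reduced expression for $w$ concatenate with one for $v$ to give a reduced expression for $wv$.

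First I would fix reduced expressions $w = s_{i_1}\cdots s_{i_p}$ and $v = s_{j_1}\cdots s_{j_q}$; by the length hypothesis their concatenation is a reduced expression for $wv$. By the definition of $\I_w$ via Theorem~\ref{tau-weyl3} applied in the affine setting, we have $\I_w = \I_{i_1}\cdots\I_{i_p}$ and $\I_v = \I_{j_1}\cdots\I_{j_q}$ as ordinary ideal products, and $\I_{wv} = \I_{i_1}\cdots\I_{i_p}\I_{j_1}\cdots\I_{j_q}$; hence the ordinary tensor isomorphism $\I_w \otimes_{\wLa} \I_v \cong \I_{wv}$ is really the statement that the multiplication map is an isomorphism, which is the well-definedness of the bijection $W_{\widetilde{\Delta}} \to \langle \I_i \rangle$ under length-additive products. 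The substantive point is therefore upgrading $\otimes_{\wLa}$ to $\Lwotimes$, i.e.\ showing $\Tor_{>0}^{\wLa}(\I_w, \I_v) = 0$ under the length hypothesis.

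To kill the higher Tor groups I would argue by induction on $\ell(w)$, reducing to the case $w = s_i$ with $\ell(s_i v) = \ell(v)+1$. Here one uses the two-term projective resolution of $\I_{s_i} = \I_i$ coming from $\mathrm{pd}_{\wLa}\I_i \le 1$, together with the torsion-pair/orthogonality structure recorded earlier: the condition $\ell(s_iv) > \ell(v)$ forces $\I_v$ to lie in a subcategory on which tensoring with $\I_i$ is exact (equivalently, $\Tor_1^{\wLa}(\I_i, \I_v) = 0$), which is exactly the content one extracts from the module-theoretic interpretation of the Weyl group action in \cite{IR,BIRS}. Once the $\ell(w)=1$ step holds, the inductive step writes $\I_w = \I_{i}\Lwotimes\I_{w'}$ with $\ell(w) = \ell(w')+1$ and $\I_{w'}\Lwotimes\I_v \cong \I_{w'v}$ by induction, and associativity of $\Lwotimes$ closes the argument.

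The main obstacle I anticipate is the vanishing $\Tor_{>0}^{\wLa}(\I_i,\I_v)=0$: proving it cleanly requires identifying precisely which modules $\I_v$ are $\Tor$-orthogonal to the length-one ideals, and this is where the specific homological geometry of the $2$-Calabi--Yau algebra $\wLa$ enters rather than formal homological algebra. I expect the cleanest route is to invoke directly the results of \cite{IR,BIRS} cited in the statement, which already establish these tensor-product formulas for the ideals $\I_i$; indeed the proposition is stated as a recollection of \cite[Proposition II.1.5, Proposition II.1.10]{BIRS} and \cite[Proposition 6.1, Theorem 6.5]{IR}, so the honest proof is to reduce everything to those references and verify the length-additivity bookkeeping, rather than to reprove the Tor-vanishing from scratch.
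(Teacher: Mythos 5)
Your proposal is correct and takes essentially the same approach as the paper: the paper offers no proof of this proposition at all, presenting it purely as a recollection of \cite[Propositions II.1.5 and II.1.10]{BIRS} and \cite[Proposition 6.1, Theorem 6.5]{IR}, which is exactly the route you settle on in your final paragraph. Your sketched induction on $\ell(w)$ with the $\Tor$-vanishing reduction is a plausible reconstruction of how those references argue, but it plays no role in the paper itself.
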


Recall that we denote by 
$B_w=\La\Lwotimes\I_{w}\Lwotimes\La\in\Db(\mod\La^{\rm e}).$
Then we give the following key proposition.

\begin{proposition}\label{braid condition1}
Let $w,v\in  W_\Delta$. If $\ell(wv)=\ell(w)+\ell(v)$, 
then we have an isomorphism in $\Db(\mod\La^{\rm e})$ 
$$\sigma^{\geq-1}(B_w)\Lotimes \sigma^{\geq-1}(B_v)\cong\sigma^{\geq-1}(B_{wv}).$$
\end{proposition}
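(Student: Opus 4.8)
The plan is to reduce the statement to the braid relation $\I_w\Lwotimes\I_v\cong\I_{wv}$ that is already available over $\wLa$ (Proposition \ref{affine braid}), transporting everything through the identification $\sigma^{\geq-1}(B_\bullet)\cong\I_\bullet\Lwotimes\La$ of Proposition \ref{quasi-iso}. First I would rewrite both sides: by Proposition \ref{quasi-iso} we have $\sigma^{\geq-1}(B_w)\cong\I_w\Lwotimes\La$, $\sigma^{\geq-1}(B_v)\cong\I_v\Lwotimes\La$ and $\sigma^{\geq-1}(B_{wv})\cong\I_{wv}\Lwotimes\La$, so it suffices to produce an isomorphism
$$(\I_w\Lwotimes\La)\Lotimes(\I_v\Lwotimes\La)\cong\I_{wv}\Lwotimes\La.$$
Using associativity of the derived tensor products together with the unit isomorphism, I would then rewrite the left-hand side as $\I_w\Lwotimes\bigl(\La\Lotimes(\I_v\Lwotimes\La)\bigr)$.

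The crucial step is to show $\La\Lotimes(\I_v\Lwotimes\La)\cong\I_v\Lwotimes\La$, compatibly with the left $\wLa$-action, so that the outer factor $\I_w\Lwotimes-$ collapses the middle $\La$. This holds because the left $\wLa$-action on $\I_v\Lwotimes\La$ factors through the surjection $\wLa\to\La$: indeed Proposition \ref{quasi-iso} identifies $\I_v\Lwotimes\La$ with $\sigma^{\geq-1}(B_v)$, which is genuinely a complex of $\La$-bimodules. Consequently the unit isomorphism is an isomorphism of $\wLa$-$\La$-bimodule complexes, and associativity over $\wLa$ gives $\I_w\Lwotimes(\I_v\Lwotimes\La)\cong(\I_w\Lwotimes\I_v)\Lwotimes\La$.

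It remains to invoke Proposition \ref{affine braid}. Since $W_\Delta$ is the standard parabolic subgroup of the affine group $W_{\widetilde\Delta}$ obtained by deleting the vertex $0$, lengths of elements of $W_\Delta$ agree in $W_\Delta$ and in $W_{\widetilde\Delta}$; hence the hypothesis $\ell(wv)=\ell(w)+\ell(v)$ transfers to $W_{\widetilde\Delta}$, and Proposition \ref{affine braid} yields $\I_w\Lwotimes\I_v\cong\I_{wv}$. Tensoring with $\La$ on the right and applying Proposition \ref{quasi-iso} once more identifies the result with $\sigma^{\geq-1}(B_{wv})$, completing the chain.

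The hard part will be the bookkeeping of the bimodule structures, and in particular upgrading the isomorphism from $\Db(\mod(\wLa^\op\otimes_K\La))$ to $\Db(\mod\La^{\rm e})$ as the statement demands. Each intermediate step is naturally an isomorphism of $\wLa$-$\La$-bimodule complexes, so to conclude in $\Db(\mod\La^{\rm e})$ I must check that every map involved is simultaneously left $\La$-linear. The associativity and unit isomorphisms are canonical and respect all the module structures present, while Proposition \ref{affine braid} supplies a $\wLa$-bimodule isomorphism whose base change $\I_w\Lwotimes\I_v\Lwotimes\La\cong\I_{wv}\Lwotimes\La$ descends to a $\La$-bimodule isomorphism precisely because all the left $\wLa$-actions in sight factor through $\La$. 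Verifying this factorization carefully — that, up to the isomorphisms of Proposition \ref{quasi-iso}, the objects are honest $\La$-bimodule complexes and the constructed equivalence is two-sided $\La$-linear — is the technical heart of the argument.
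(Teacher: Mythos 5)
Your chain of identifications is fine as far as it goes, but it only goes as far as $\Db(\mod(\wLa^{\op}\otimes_K\La))$: every step after you replace $\sigma^{\geq-1}(B_w)$ by $\I_w\Lwotimes\La$ takes place among complexes that carry no left $\La$-module structure at all, and the upgrade to $\Db(\mod\La^{\rm e})$ --- which you correctly single out as the technical heart --- is exactly where your argument fails. The justification you offer is not correct: the left $\wLa$-action on the complexes $\I_w\Lwotimes\La$, $\I_w\Lwotimes\I_v\Lwotimes\La$ and $\I_{wv}\Lwotimes\La$ does \emph{not} factor through the surjection $\wLa\to\La$ (already $e_0\I_w\neq 0$, so $\I_w$ itself is not a left $\La$-module), hence the phrase ``every map involved is simultaneously left $\La$-linear'' is meaningless for these objects. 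They are merely isomorphic \emph{in the derived category} to restrictions of $\La$-bimodule complexes, and that identification (Proposition \ref{quasi-iso}) is itself only $(\wLa,\La)$-bilinear. What you actually produce is an isomorphism between the images of $\sigma^{\geq-1}(B_w)\Lotimes\sigma^{\geq-1}(B_v)$ and $\sigma^{\geq-1}(B_{wv})$ under the restriction functor $\Db(\mod\La^{\rm e})\to\Db(\mod(\wLa^{\op}\otimes_K\La))$; since restriction along a surjection of algebras is not full on derived categories, such an isomorphism need not lift. (This is a genuine phenomenon, not bookkeeping: over the surjection $K[y]\to K[y]/(y^2)$, the complex $\bigl(K[y]/(y^2)\xrightarrow{\ y\ }K[y]/(y^2)\bigr)$ and $K\oplus K[1]$ have isomorphic restrictions but are not isomorphic over $K[y]/(y^2)$.) The sound parts of your proposal are the transfer of the length hypothesis to $W_{\widetilde{\Delta}}$ (length is preserved in standard parabolic subgroups) and the one-sided isomorphism itself.

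The paper is structured precisely to avoid this trap, and the difference is instructive. It never puts $\I_w\Lwotimes\La$ in the left-hand slot; instead it keeps the untruncated $B_w=\La\Lwotimes\I_w\Lwotimes\La$, an honest object of $\Db(\mod\La^{\rm e})$, and applies Proposition \ref{quasi-iso} only to the \emph{right-hand} factor, inside the functor $\La\Lwotimes\I_w\Lwotimes(-)$, which carries $(\wLa,\La)$-bimodule complexes to $\La$-bimodule complexes. This yields $B_w\Lotimes\sigma^{\geq-1}(B_v)\cong B_{wv}$ genuinely in $\Db(\mod\La^{\rm e})$. The price is an extra step you do not have: one must compare $B_w\Lotimes\sigma^{\geq-1}(B_v)$ with $\sigma^{\geq-1}(B_w)\Lotimes\sigma^{\geq-1}(B_v)$. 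The paper does this by tensoring the truncation triangle $\sigma^{\leq-2}(B_w)\to B_w\to\sigma^{\geq-1}(B_w)$ with $\sigma^{\geq-1}(B_v)$ and chasing the long exact cohomology sequence, where the computation $H^{-2}\bigl(\sigma^{\leq-2}(B_w)\Lotimes\sigma^{\geq-1}(B_v)\bigr)\cong{}_1(I_{wv})_\nu\cong H^{-2}(B_{wv})$ (a second use of the hypothesis $\ell(wv)=\ell(w)+\ell(v)$, via $I_wI_v=I_{wv}$) shows that the truncated-away cohomology of $B_w$ cancels exactly, giving $\sigma^{\geq-1}(B_w)\Lotimes\sigma^{\geq-1}(B_v)\cong\sigma^{\geq-1}(B_{wv})$ inside $\Db(\mod\La^{\rm e})$. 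To repair your write-up you need this truncation-triangle argument, or some other genuine proof that your $(\wLa,\La)$-bilinear isomorphism lifts to $\La^{\rm e}$.
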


\begin{proof}
For simplicity, we write $X^{\geq-1}:=\sigma^{\geq-1}(X)$ and $X^{\leq-2}:=\sigma^{\leq-2}(X)$ for $X\in \Db(\mod\La^{\rm e})$. 

First we have 
\begin{eqnarray*}
H^{-2}(B_w^{\leq-2}\Lotimes B_v^{\geq-1})
&=&H^{0}(B_w^{\leq-2}[-2]\Lotimes B_v^{\geq-1})\\
&\cong&H^{0}(B_w^{\leq-2}[-2])\otimes_\La H^{0}(B_v^{\geq-1})\\
&\cong&{}_1(I_w)_\nu\otimes_\La I_{v}\ \ \ \ \ \ \ \ \ \ \ \ \ \ \ \ \ \ \ \ (\mathrm{Lemma}\ \ref{homology2})\\
&\cong&{}_1(I_{wv})_\nu.\ \ \ \ \ \ \ \ \ \ \ \ \ \ \ \ \ \ \ \ \ \ \ \  \ (\ell(wv)=\ell(w)+\ell(v))
\end{eqnarray*}

Next, we have 
\begin{eqnarray*}
B_w\Lotimes  B_w^{\geq-1}&\cong&\La\Lwotimes\I_{w}\Lwotimes\La\Lotimes \sigma^{\geq-1}(\La\Lwotimes\I_{v}\Lwotimes\La)\\
&\cong&\La\Lwotimes\I_{w}\Lwotimes \sigma^{\geq-1}(\La\Lwotimes\I_{v}\Lwotimes\La)\\
&\cong&\La\Lwotimes\I_{w}\Lwotimes \I_{v}\Lwotimes\La\ \ \ \ \ \ \ \ \ \ \ \ \ \ \ \ \ \ \ \ \ \  \  (\mathrm{Lemma}\ \ref{quasi-iso})\\ 
&\cong&\La\Lwotimes\I_{wv}\Lwotimes\La\ \ \ \ \ \ \ \ \ \ \ \ \ \ \ \ \ \ \ \ \ \ \ \ \ \ \ \  \  (\mathrm{Proposition}\ \ref{affine braid})\\
&=&B_{wv}.
\end{eqnarray*}

Thus Lemma \ref{homology2} implies that $H^{-2}(B_w\Lotimes B_w^{\geq-1})\cong {}_1(I_{wv})_\nu$.

On the other hand, take the triangle 

$$\xymatrix@C30pt@R30pt{\cdots\ar[r]&B_w^{\leq-2}\ar[r]^{}& B_w\ar[r]^{ }&B_w^{\geq-1}\ar[r]& (B_w^{\leq-2})[1]\ar[r]&\cdots.}$$ 

Then, applying the functor $-\Lotimes B_v^{\geq-1}$ to the triangle, we have the triangle 
$$\xymatrix@C15pt@R30pt{\cdots\ar[r]&B_w^{\leq-2}\Lotimes B_v^{\geq-1}\ar[r]^{}& B_w\Lotimes B_v^{\geq-1}\ar[r]^{ }&B_w^{\geq-1}\Lotimes B_v^{\geq-1}\ar[r]& B_w^{\leq-2}\Lotimes B_v^{\geq-1}[1]\ar[r]&\cdots.}$$

Taking the homology, we have the following long exact sequence 

\[
\xymatrix@C15pt@R5pt
{  0 \ar[r]^{ } &  H^{-2}(B_w^{\leq-2}\Lotimes B_v^{\geq-1})\ar[r]^{h} &H^{-2}(B_w\Lotimes B_v^{\geq-1})   \ar[r] &H^{-2}(B_w^{\geq-1}\Lotimes B_v^{\geq-1})\ar[r] &\\
\ar[r]^{ } &  H^{-1}(B_w^{\leq-2}\Lotimes B_v^{\geq-1})\ar[r] &H^{-1}(B_w\Lotimes B_v^{\geq-1})   \ar[r]^{u_{-1}\ \ \ } &H^{-1}(B_w^{\geq-1}\Lotimes B_v^{\geq-1})\ar[r] &\\
\ar[r]  &    H^0(B_w^{\leq-2}\Lotimes B_v^{\geq-1}) \ar[r] & H^0(B_w\Lotimes B_v^{\geq-1})\ar[r]^{u_{0}\ \ \ \ }  &H^0(B_w^{\geq-1}\Lotimes B_v^{\geq-1}) \ar[r]  &  0. }
\]

Clearly we have 
$H^i(B_w^{\leq-2}\Lotimes B_v^{\geq-1})\cong0$ for $i=0,-1$. 
Hence $u_{-1}$ and $u_{0}$ are isomorphisms. 
Moreover from the above two equalities, 
$h$ is an isomorphism and hence $H^{-2}(B_w^{\geq-1}\Lotimes B_v^{\geq-1})\cong 0$. 
Therefore we have 
\begin{eqnarray*}
B_w^{\geq-1}\Lotimes B_v^{\geq-1}&\cong&\sigma^{\geq-1}(B_w^{\geq-1}\Lotimes B_v^{\geq-1})\\
&\cong&\sigma^{\geq-1}(B_w\Lotimes B_v^{\geq-1})\\
&\cong&\sigma^{\geq-1}(B_{wv}).
\end{eqnarray*}
Thus we get the conclusion.
\end{proof}

Using Proposition \ref{braid condition1}, we obtain the following consequence.

\begin{corollary}\label{braid condition2}
$T_i$ $(i\in \Delta_0^{\rm f})$ satisfy the following braid relations in $\Db(\mod\La^{\rm e})$

\[\left\{\begin{array}{ll}
\ T_i\Lotimes T_{j}\cong T_{j}\Lotimes T_i & \mbox{if no edge between $i$ and $j$ in $\Delta^{\rm f}$},\\
\ T_i\Lotimes T_{j}\Lotimes T_i\cong T_{j}\Lotimes T_i\Lotimes T_{j} & \mbox{if there is an edge $i\stackrel{ }{\mbox{---}}j$ in $\Delta^{\rm f}$},\\
\  T_i\Lotimes T_{j}\Lotimes T_i\Lotimes T_{j}\cong T_{j}\Lotimes T_i\Lotimes T_{j}\Lotimes T_{i} & \mbox{if there is an edge $i\stackrel{4}{\mbox{---}}j$ in $\Delta^{\rm f}$}.\end{array}\right.\]
\end{corollary}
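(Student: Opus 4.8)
The plan is to deduce the braid relations in Corollary~\ref{braid condition2} directly from Proposition~\ref{braid condition1} by verifying, in each of the three cases, that the relevant products of the elements $t_i$ are length-additive in $W_\Delta$, so that both sides reduce to $\sigma^{\geq-1}(B_w)$ for a common $w\in W_\Delta$. Recall from Theorem~\ref{folding} that $W_\Delta^\iota=\langle t_i\mid i\in\Delta_0^{\rm f}\rangle$ is isomorphic to the Weyl group $W_{\Delta^{\rm f}}$ of the folded graph, with the $t_i$ corresponding to the simple reflections. Hence the braid relation $\underbrace{t_it_jt_i\cdots}_{m}=\underbrace{t_jt_it_j\cdots}_{m}$ (with $m=m_{\Delta^{\rm f}}(i,j)\in\{2,3,4\}$) holds as an identity in $W_\Delta^\iota\subseteq W_\Delta$, since these are exactly the defining braid relations of $W_{\Delta^{\rm f}}$ transported through the isomorphism. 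Writing $w$ for this common element of $W_\Delta$, the two sides of each displayed equation in the corollary are, respectively, iterated $\Lotimes$-products of the $T_i=\sigma^{\geq-1}(B_{t_i})$.

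First I would fix one case, say the edge case $i\stackrel{ }{\mbox{---}}j$ with $m=3$, and show that the word $t_it_jt_i$ is a \emph{reduced} expression in $W_\Delta$, i.e.\ that $\ell(t_it_jt_i)=\ell(t_i)+\ell(t_j)+\ell(t_i)$, and likewise for $t_jt_it_j$, with both products equal to the same $w\in W_\Delta$. Granting this length-additivity, I would apply Proposition~\ref{braid condition1} repeatedly: since $\ell$ is additive along the word, each consecutive pair $t_{i_1}t_{i_2}$ with $\ell(t_{i_1}t_{i_2})=\ell(t_{i_1})+\ell(t_{i_2})$ gives
\[
\sigma^{\geq-1}(B_{t_{i_1}})\Lotimes\sigma^{\geq-1}(B_{t_{i_2}})\cong\sigma^{\geq-1}(B_{t_{i_1}t_{i_2}}),
\]
and by induction the whole product telescopes to $\sigma^{\geq-1}(B_w)$. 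Applying the same computation to the other side of the relation yields the identical object $\sigma^{\geq-1}(B_w)$, since both reduced words represent the same $w$. This gives $T_i\Lotimes T_j\Lotimes T_i\cong\sigma^{\geq-1}(B_w)\cong T_j\Lotimes T_i\Lotimes T_j$. The commutation case ($m=2$, no edge in $\Delta^{\rm f}$) and the type-$\type{B}$/$\type{F}$ case ($m=4$) are handled identically, replacing the word $t_it_jt_i$ by $t_it_j$ and by $t_it_jt_it_j$ respectively.

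The main obstacle is the length-additivity input: I must check that the naive concatenations such as $t_it_jt_i$ and $t_jt_it_j$ are reduced words in $W_\Delta$ and represent a single element $w$, and that \emph{every} consecutive truncation used in the induction is itself length-additive (so that Proposition~\ref{braid condition1} applies at each step, not merely to the endpoints). This is where the structure of the folding is essential: since $W_\Delta^\iota\cong W_{\Delta^{\rm f}}$ is a parabolic-type subgroup generated by the $t_i$, the length function $\ell_{W_{\Delta^{\rm f}}}$ on $W_{\Delta^{\rm f}}$ should be compatible (up to the fixed multiplicities $\ell(t_i)\in\{1,2,3\}$ coming from the three cases in \eqref{fold}) with the restriction of $\ell$ on $W_\Delta$, so that a reduced word in $W_{\Delta^{\rm f}}$ lifts to a length-additive concatenation of the chosen reduced expressions $t_i$ in $W_\Delta$. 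I would verify this compatibility of lengths case-by-case using the explicit description of $\iota$ and of the $t_i$ in Theorem~\ref{folding}, together with the standard fact that $\ell(uv)=\ell(u)+\ell(v)$ for $u,v\in W_\Delta$ iff no reduced expression for $u$ ends with a generator that a reduced expression for $v$ begins with (the exchange/deletion condition). Once this compatibility is established, the braid relations follow purely formally from Proposition~\ref{braid condition1}, with no further homological computation required.
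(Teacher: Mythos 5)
Your proposal is correct and takes essentially the same route as the paper: the paper likewise reduces each relation to Proposition \ref{braid condition1}, identifying both sides (by repeated application, i.e.\ your telescoping) with $\sigma^{\geq-1}(B_w)$ for the common element $w=t_it_j=t_jt_i$ (resp.\ $t_it_jt_i=t_jt_it_j$, etc.) of $W_\Delta^\iota\cong W_{\Delta^{\rm f}}$. The only difference is one of emphasis: the paper invokes Proposition \ref{braid condition1} without spelling out the length-additivity hypothesis $\ell(t_{i_1}\cdots t_{i_k})=\sum_j\ell(t_{i_j})$ in $W_\Delta$, which you correctly flag as the key input; it is the standard folding fact that reduced words in $W_{\Delta^{\rm f}}$ lift to length-additive (reduced) concatenations of the expressions $t_i$ in $W_\Delta$.
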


\begin{proof}
We will show the first statement. 
From Proposition \ref{braid condition1}, we have 
$T_i\Lotimes T_{j}\cong\sigma^{\geq-1}(\La\Lwotimes\I_{t_it_j}\Lwotimes\La)$ and 
$T_j\Lotimes T_{i}\cong\sigma^{\geq-1}(\La\Lwotimes\I_{t_jt_i}\Lwotimes\La)$. 
Because we have $t_it_j=t_jt_i$, we have $\I_{t_it_j}=\I_{t_jt_i}$. Therefore 
we conclude $T_i\Lotimes T_{j}\cong T_j\Lotimes T_{i}$. 

By applying Proposition \ref{braid condition1} repeatedly, 
the second and third statements can be shown similarly. 
\end{proof}

Finally we give the following terminology.

\begin{definition}\label{def two-sided set}
We denote by $\langle {T}_i\ |\ i\in \Delta_0^{\rm f} \rangle$ the set of 
two-sided tilting complexes of $\La^{\rm e}$ which can be written as 
$$T_{i_1}^{\epsilon_{i_1}}\Lotimes T_{i_2}^{\epsilon_{i_2}}\Lotimes\cdots \Lotimes T_{i_k}^{\epsilon_{i_k}}\in\Db(\mod\La^{\rm e}),$$
where $i_1,\ldots, i_k\in \Delta_0^{\rm f}$ and $\epsilon_{i_j}\in\{\pm 1\}$. 
Then, for $a=a_{i_1}^{\epsilon_{i_1}}\cdots a_{i_k}^{\epsilon_{i_k}}\in B_{\Delta^{\rm f}}$, 
we define  
$${T}_{a}:=T_{i_1}^{\epsilon_{i_1}}\Lotimes T_{i_2}^{\epsilon_{i_2}}\Lotimes\cdots \Lotimes T_{i_k}^{\epsilon_{i_k}},$$
$$\bmu_{a}:=\bmu_{i_1}^{\epsilon_{i_1}}\circ\cdots \circ \bmu_{i_k}^{\epsilon_{i_k}}(\La).$$ 
\end{definition}

Then the next proposition shows that the left action of ${T}_{i}$ (respectively, ${T}_{i}^{-1}$) gives mutation $\bmu_{i}^+$ (respectively, $\bmu_{i}^-$) in $\Db(\mod\La)$.  

\begin{proposition}\label{operate}
\begin{itemize}
\item[(a)] There is a group homomorphism
$$B_{\Delta^{\rm f}}\to\langle {T}_i\ |\ i\in \Delta_0^{\rm f} \rangle,\ a\mapsto T_a.$$
\item[(b)] We have an isomorphism ${T_a}\cong \bmu_a(\La)$ in $\Db(\mod \La)$.
\end{itemize}
\end{proposition}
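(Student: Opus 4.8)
The plan is to obtain (a) directly from Corollary \ref{braid condition2}. Two-sided tilting complexes of $\La$ form a group under $\Lotimes$, with identity $\La$ and inverses as in Definition \ref{def two-sided}(iii); by Proposition \ref{two-sided} each $T_i$ lies in this group and is invertible with inverse $T_i^{-1}$. Hence $a_i^{\pm1}\mapsto T_i^{\pm1}$ defines a homomorphism from the free group on $\{a_i\mid i\in\Delta_0^{\rm f}\}$ into this group. Corollary \ref{braid condition2} shows that the $T_i$ satisfy precisely the braid relations presenting $B_{\Delta^{\rm f}}$, so by the universal property of the presentation this homomorphism factors through $B_{\Delta^{\rm f}}$, producing a well-defined group homomorphism $a\mapsto T_a$ with image $\langle T_i\mid i\in\Delta_0^{\rm f}\rangle$ by definition. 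Concretely, the braid relations are exactly what is needed to see that $T_a$ is independent of the chosen word for $a$.

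For (b) I would argue by induction on the length $k$ of a word $a=a_{i_1}^{\epsilon_{i_1}}\cdots a_{i_k}^{\epsilon_{i_k}}$, peeling off the first letter as $a=a_{i_1}^{\epsilon_{i_1}}c$ with $c=a_{i_2}^{\epsilon_{i_2}}\cdots a_{i_k}^{\epsilon_{i_k}}$. Regarded in $\Db(\mod\La)$, the complex $T_a=T_{i_1}^{\epsilon_{i_1}}\Lotimes T_c$ only retains the right $\La$-action of its rightmost tensor factor, so applying $F_c:=-\Lotimes T_c$ to the isomorphism $T_{i_1}^{\epsilon_{i_1}}\cong\bmu_{i_1}^{\epsilon_{i_1}}(\La)$ (Lemma \ref{two-sided lemma} for $\epsilon_{i_1}=+1$, and its $\epsilon_{i_1}=-1$ analogue discussed below) yields $T_a\cong\bmu_{i_1}^{\epsilon_{i_1}}(\La)\Lotimes T_c=F_c(\bmu_{i_1}^{\epsilon_{i_1}}(\La))$ in $\Db(\mod\La)$. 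The inductive hypothesis gives $F_c(\La)=\La\Lotimes T_c\cong\bmu_c(\La)$, so it remains to establish the commutation $F_c(\bmu_{i_1}^{\epsilon_{i_1}}(\La))\cong\bmu_{i_1}^{\epsilon_{i_1}}(F_c(\La))$; granting it, the right-hand side equals $\bmu_{i_1}^{\epsilon_{i_1}}(\bmu_c(\La))=\bmu_a(\La)$, completing the induction. The base case $k\le 1$ with $\epsilon=+1$ is Lemma \ref{two-sided lemma}, while the generator case $T_i^{-1}\cong\bmu_i^-(\La)$ follows from the same commutation together with $T_i\Lotimes T_i^{-1}\cong\La$ and $\bmu_i^+\circ\bmu_i^-=\id$.

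The heart of the matter, and the step I expect to be the main obstacle, is therefore the commutation of the standard autoequivalence $F_c=-\Lotimes T_c$ with the mutation operations $\bmu_i^{\pm}$. One half of this is formal: silting mutation is defined purely through triangles and minimal left/right $\add$-approximations (Definition \ref{defsm}), all of which are preserved by any triangle equivalence, so that $F_c(\mu_X^+(P))\cong\mu_{F_c(X)}^+(F_c(P))$ automatically. The delicate point is that $\bmu_i^{\pm}$ mutates the \emph{specific} summand attached to the idempotent $e_i^\iota$ under the canonical identification $\End(-)\cong\La$ of Theorem \ref{tilt-bij}(b); hence one must check that $F_c$ carries the vertex-$i$ summand of $\bmu_c(\La)$ to the vertex-$i$ summand of its image, equivalently that the automorphism of $\La$ induced by $F_c$ on endomorphism rings fixes each $e_i^\iota$. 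I would verify this compatibility by tracing the identifications through the construction of $T_c$ out of the $\I_w$ (Propositions \ref{quasi-iso} and \ref{braid condition1}) and the explicit mutation description of subsection \ref{some results}; this bookkeeping is where the substance of the argument lies, the remaining manipulations of $\Lotimes$ being routine.
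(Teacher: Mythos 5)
Your proposal follows the paper's own proof essentially verbatim: part (a) is deduced from Corollary \ref{braid condition2} exactly as in the paper, and part (b) is the paper's induction --- write $T_a=T_{i_1}^{\epsilon_{i_1}}\Lotimes T_c$, apply the standard equivalence $-\Lotimes T_c$ to the isomorphism $T_{i_1}^{\epsilon_{i_1}}\cong\bmu_{i_1}^{\epsilon_{i_1}}(\La)$ of Lemma \ref{two-sided lemma}, and use that equivalences commute with mutation. The summand-labeling compatibility you single out as the heart of the matter is exactly what the paper compresses into the single phrase ``since mutation is preserved by an equivalence,'' so your write-up is, if anything, more explicit than the paper about the one step it leaves to the reader.
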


\begin{proof} 
(a) follows from Corollary \ref{braid condition2}.

(b) By Lemma \ref{two-sided lemma}, the statement is clear if $a=a_i^{\epsilon_i}$ for any $i\in \Delta_0^{\rm f}$ and $T_i^{\epsilon_i}\cong\bmu_i^{\epsilon_i}(\La)$. 
We will show $T_i^{\epsilon_i} \Lotimes T_j^{\epsilon_j}\cong \bmu_i^{\epsilon_i}\circ\bmu_j^{\epsilon_j}(\La)$ and then the assertion follows from an obvious induction. 
Since mutation is preserved by an equivalence, 
we have $T_i^{\epsilon_i} \Lotimes T_j^{\epsilon_j}\cong
\bmu_i^{\epsilon_i}(\La)\Lotimes T_j^{\epsilon_j}\cong
\bmu_i^{\epsilon_i}(\La\Lotimes T_j^{\epsilon_j})\cong
\bmu_i^{\epsilon_i}\circ\bmu_j^{\epsilon_j}(\La)$.
Thus the assertion holds. 
\end{proof}

\begin{theorem}\label{group surj}
There is a group isomorphism
$$B_{\Delta^{\rm f}}\to\langle {T}_i\ |\ i\in \Delta_0^{\rm f} \rangle,\ a\mapsto T_a,$$
which gives a bijection between $B_{\Delta^{\rm f}}$ and $\tilt\La$.
\end{theorem}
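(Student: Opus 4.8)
The plan is to recognize that essentially all the substantive content has already been assembled in the preceding propositions, so that the statement follows by a short packaging argument. Write $\Phi\colon B_{\Delta^{\rm f}}\to\langle T_i\mid i\in\Delta_0^{\rm f}\rangle$ for the map $a\mapsto T_a$. By Proposition \ref{operate}(a) this $\Phi$ is a group homomorphism, its well-definedness on the abstract braid group resting on the bimodule-level braid relations of Corollary \ref{braid condition2}; and it is surjective by the very definition of its target, since every element of $\langle T_i\mid i\in\Delta_0^{\rm f}\rangle$ is by construction of the form $T_a$ for some $a\in B_{\Delta^{\rm f}}$. Thus the only point genuinely requiring an argument is the injectivity of $\Phi$, and this is where the mutation picture enters.

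First I would prove injectivity by passing to one-sided complexes through the restriction functor $\Db(\mod\La^{\rm e})\to\Db(\mod\La)$. Suppose $T_a\cong T_b$ in $\Db(\mod\La^{\rm e})$ for some $a,b\in B_{\Delta^{\rm f}}$. Applying restriction, which is a functor and hence preserves isomorphisms, yields $T_a\cong T_b$ in $\Db(\mod\La)$. By Proposition \ref{operate}(b) we have $T_a\cong\bmu_a(\La)$ and $T_b\cong\bmu_b(\La)$ in $\Db(\mod\La)$, so $\bmu_a(\La)\cong\bmu_b(\La)$ as elements of $\tilt\La$. But $a\mapsto\bmu_a(\La)$ is a bijection $B_{\Delta^{\rm f}}\to\tilt\La$ by Theorem \ref{tilt-bij}(a), hence injective, and therefore $a=b$. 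Combined with the homomorphism and surjectivity already recorded, this shows $\Phi$ is a group isomorphism.

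Finally I would deduce the last assertion. The composite of $\Phi$ with restriction to $\Db(\mod\La)$ sends $a$ to the tilting complex $\bmu_a(\La)$, which is exactly the bijection $B_{\Delta^{\rm f}}\to\tilt\La$ of Theorem \ref{tilt-bij}(a); hence restricting the two-sided tilting complexes in $\langle T_i\mid i\in\Delta_0^{\rm f}\rangle$ to $\Db(\mod\La)$ recovers $\tilt\La$, as claimed. I do not expect a real obstacle at this final stage: the serious work has been absorbed into the earlier homological computations (Lemmas \ref{homology} and \ref{homology2}, Proposition \ref{quasi-iso}), the proof that each $T_i$ is a two-sided tilting complex (Proposition \ref{two-sided}), and above all the bimodule braid relations of Corollary \ref{braid condition2}. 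The only care needed here is the harmless observation that an isomorphism of bimodule complexes restricts to an isomorphism of one-sided complexes, which is precisely what lets Theorem \ref{tilt-bij}(a) convert the desired two-sided statement into the already-established one-sided bijection.
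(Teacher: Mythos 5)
Your proposal is correct and follows essentially the same route as the paper, whose own proof is just the one-line citation of Theorem \ref{tilt-bij} and Proposition \ref{operate}; you have simply made explicit the implicit steps (well-definedness via Corollary \ref{braid condition2}, surjectivity by definition of the target, and injectivity by restricting to $\Db(\mod\La)$ and invoking the bijection $a\mapsto\bmu_a(\La)$). No gaps here.
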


\begin{proof}
This follows from Theorem \ref{tilt-bij} and Proposition \ref{operate}.
\end{proof}


\section{Derived Picard groups}

The notion of derived Picard groups was introduced by Rouquier-Zimmermann and Yekutieli \cite{RZ,Y}, which is the group of auto-equivalences consisting of standard functors modulo functorial isomorphisms.  
For example, those of hereditary algebras \cite{MY}, commutative algebras \cite{Y} and Brauer tree algebras \cite{RZ,SZ} have been investigated. 
The aim of this subsection is to determine the derived Picard group of $\La$ for type $\type{A}_n$ and $\type{D}_n$. 

We follow Notation \ref{main notation}. First we recall the definition of the derived Picard group as follows.

\begin{definition}\label{def dpg} 
The \emph{derived Picard group} $\dpg(\La)$ of $\La$ is the group of isomorphism classes of two-sided tilting complexes of $\Db(\mod\La^{\rm e})$.  
The identity element is $\La$ and the product of the classes of $X$ and $Y$ is given by $X\Lotimes Y$. 
This is equivalent to say the group of auto-equivalences consisting of standard functors modulo functorial isomorphisms. 
\end{definition}

Let $\Aut(\La)$ be the group of automorphisms of $\La$ and 
$\Inn(\La)$ the subgroup consisting of inner automorphisms which is defined by $x\mapsto \lambda x\lambda^{-1}$ ($\lambda\in\Lambda^\times$) for $x\in\La$. 
Moreover the outer automorphisms is defined by 
$\Out(\La)=\Aut(\La)/\Inn(\La)$. 

To give our result, we recall the following well-known result (see, for example \cite[Theorem 3.4.1]{DK} and \cite[Theorem 11.1.7]{HGK} for a more general case). 

\begin{lemma}\label{Inn}
Let $\{f_1,\cdots,f_n\}$ be a complete set of orthogonal primitive idempotents of $\Lambda$. Then there exist $\lambda\in\Lambda^\times$ and unique $\rho\in\mathfrak{S}_n$ such that $\lambda f_i\lambda^{-1}=e_{\rho(i)}$ for any $i$. 
\end{lemma}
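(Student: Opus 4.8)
The plan is to deduce this from the Krull--Schmidt theorem together with the standard correspondence between isomorphisms of indecomposable projective modules and conjugating units, which I would then patch into a single global unit. Throughout I would use that $\La$ is a basic finite-dimensional (hence semiperfect) algebra, so that the Krull--Schmidt theorem holds in $\mod\La$ and the indecomposable projectives $e_1\La,\ldots,e_n\La$ associated with the fixed idempotents are pairwise non-isomorphic.

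First I would produce the permutation. The two decompositions $\La=\bigoplus_{i=1}^n e_i\La$ and $\La=\bigoplus_j f_j\La$ express the regular module as direct sums of indecomposable projectives, so by Krull--Schmidt the two families have the same cardinality $n$ and the same multiset of isomorphism classes. Since $\La$ is basic the classes $[e_i\La]$ are pairwise distinct, whence there is a \emph{unique} bijection $\rho\in\mathfrak{S}_n$ with $f_i\La\cong e_{\rho(i)}\La$ as right $\La$-modules; this already settles the uniqueness of $\rho$.

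Next I would lift these module isomorphisms to ring elements and assemble them. For each $i$ I would fix an isomorphism $f_i\La\xrightarrow{\sim}e_{\rho(i)}\La$ together with its inverse; under the identification $\Hom_\La(e_j\La,e_k\La)\cong e_k\La e_j$ given by evaluation at the idempotent, these correspond to elements $a_i\in e_{\rho(i)}\La f_i$ and $b_i\in f_i\La e_{\rho(i)}$ with $a_ib_i=e_{\rho(i)}$ and $b_ia_i=f_i$. Setting $\lambda:=\sum_{i=1}^n a_i$ and $\mu:=\sum_{i=1}^n b_i$, the orthogonality relations $e_ke_l=\delta_{kl}e_k$ and $f_kf_l=\delta_{kl}f_k$ kill all cross terms, giving $\lambda\mu=\sum_i e_{\rho(i)}=1$ and $\mu\lambda=\sum_i f_i=1$; thus $\lambda\in\La^\times$ with $\lambda^{-1}=\mu$. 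The same relations yield $\lambda f_i=a_i=e_{\rho(i)}\lambda$, so $\lambda f_i\lambda^{-1}=e_{\rho(i)}$ for every $i$, as required.

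The one genuinely delicate point is the patching step: I would need to verify that the independently chosen partial isomorphisms assemble into a two-sided inverse pair $\lambda,\mu$, and this is exactly where the orthogonality \emph{and} completeness of both idempotent families must be used simultaneously. Everything else reduces to Krull--Schmidt and the routine identification of homomorphisms between projectives with multiplication by ring elements.
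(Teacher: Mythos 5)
Your proposal is correct and follows essentially the same route as the paper's proof: extract the permutation $\rho$ from the two decompositions $\bigoplus_i e_i\La=\La=\bigoplus_i f_i\La$, realize the isomorphisms $f_i\La\cong e_{\rho(i)}\La$ as elements $a_i\in e_{\rho(i)}\La f_i$, $b_i\in f_i\La e_{\rho(i)}$ via $\Hom_\La(e_j\La,e_k\La)\cong e_k\La e_j$, and sum them to a unit $\lambda$ with $\lambda f_i=e_{\rho(i)}\lambda$. Your explicit appeal to basicness of $\La$ to justify the uniqueness of $\rho$ is a welcome detail that the paper leaves implicit.
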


For the convenience of the reader, we give a proof of the lemma.

\begin{proof}
Because we have $\bigoplus_{i=1}^ne_i\La =\Lambda=\bigoplus_{i=1}^nf_i\La$, 
there exists $\rho\in\mathfrak{S}_n$ such that 
$f_i\Lambda\cong e_{\rho(i)}\La$ for any $i$, which is unique with respect to
the idempotents. 
Since we have $\Hom_\La(f_i\La,e_{\rho(i)}\La)\cong e_{\rho(i)}\La f_i$ and 
 $\Hom_\La(e_{\rho(i)}\La,f_i\La)\cong f_i\La e_{\rho(i)}$, 
 there exist $\lambda_i\in e_{\rho(i)}\La f_i$ and $\gamma_i\in f_i\La e_{\rho(i)}$ such that $\lambda_i\gamma_i=e_{\rho(i)}$ and $\gamma_i\lambda_i=f_i$. 
Let $\lambda:=\sum_{i=1}^n\lambda_i$ and $\gamma:=\sum_{i=1}^n\gamma_i$. 
Then we have $\lambda\gamma=1=\gamma\lambda$ and $\lambda f_i=\lambda_i=e_{\rho(i)}\lambda $.
\end{proof}

Then we divide the situation into the following two cases.

(Case I). The Nakayama permutation of $\La$ is the identity 
and hence $\Delta=\Delta^{\rm f}$.

(Case II). The Nakayama permutation of $\La$ is not the identity 
and hence $\Delta\neq\Delta^{\rm f}$.

Then we define the action of $\Out(\La)$ on $B_{\Delta^{\rm f}}$ as follows.

\begin{definition}
By Lemma \ref{Inn}, for $\phi\in\Aut(\La)$, there exist 
$\lambda\in\La^\times$ and $\rho^\phi\in\mathfrak{S}_{n}$ such that  
$\phi(e_i)= \lambda e_{\rho^\phi(i)}\lambda^{-1}$, which  
admits a group homomorphism 
$$\Out(\La)\to \mathfrak{S}_{n}, \phi\mapsto\rho^\phi.$$

Then we act $\Out(\La)$ on $B_{\Delta^{\rm f}}$ as follows

\[
\Out(\La)\times B_{\Delta^{\rm f}}\to B_{\Delta^{\rm f}}, (\phi,a)\mapsto a^\phi:=
\left\{\begin{array}{ll}
a_{\rho^\phi(i_1)}^{\epsilon_{i_1}}\cdots a_{\rho^\phi(i_k)}^{\epsilon_{i_k}} & (\mbox{Case I}),\\
a_{i_1}^{\epsilon_{i_1}}\cdots a_{i_k}^{\epsilon_{i_k}} & (\mbox{Case II})\\
\end{array}\right.\]
for an element $a=a_{i_1}^{\epsilon_{i_1}}\cdots a_{i_k}^{\epsilon_{i_k}}\in B_{\Delta^{\rm f}}$.

Then, for $(\phi,a),(\phi',a')\in\Out(\La)\times B_{\Delta^{\rm f}}$, we define the multiplication by  
$$(\phi,a)\cdot(\phi',a')=(\phi\phi',a^{\phi'}a')$$
and define the semidirect product $\Out(\La)\ltimes B_{\Delta^{\rm f}}$.
\end{definition}

Then we will show the following theorem.

\begin{theorem}\label{dpg thm}
Let $\La$ be a preprojective algebra of type $\type{A}_n$ or $\type{D}_n$. 
There is a group isomorphism 
$$\Theta:\Out(\La)\ltimes B_{\Delta^{\rm f}}\to\dpg(\Lambda),\ (\phi,a)\mapsto {}_\phi\La\Lotimes{T}_{a}.$$
\end{theorem}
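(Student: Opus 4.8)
The plan is to show that the map $\Theta$ is a well-defined group homomorphism, that it is surjective, and that it is injective; the three together give the isomorphism. By Theorem \ref{group surj} we already know that $a\mapsto T_a$ is an injective group homomorphism $B_{\Delta^{\rm f}}\to\dpg(\La)$, and the twist automorphisms $\phi\mapsto {}_\phi\La$ give a homomorphism $\Aut(\La)\to\dpg(\La)$ whose kernel is precisely $\Inn(\La)$ (since ${}_\phi\La\cong\La$ in $\Db(\mod\La^{\rm e})$ exactly when $\phi$ is inner), so it descends to an injection $\Out(\La)\hookrightarrow\dpg(\La)$. The content of $\Theta$ is therefore to splice these two homomorphisms together compatibly with the semidirect-product structure.

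First I would check that $\Theta$ is a homomorphism. Using $\Theta(\phi,a)={}_\phi\La\Lotimes T_a$, the identity to verify is
$$({}_\phi\La\Lotimes T_a)\Lotimes({}_{\phi'}\La\Lotimes T_{a'})\cong{}_{\phi\phi'}\La\Lotimes T_{a^{\phi'}a'}.$$
The key commutation relation is ${T}_a\Lotimes{}_{\phi'}\La\cong{}_{\phi'}\La\Lotimes T_{a^{\phi'}}$, which says that conjugating the two-sided tilting complex $T_a$ past the twist ${}_{\phi'}\La$ relabels the braid generators according to $\rho^{\phi'}$. Granting this, the associativity of $\Lotimes$ and ${}_\phi\La\Lotimes{}_{\phi'}\La\cong{}_{\phi\phi'}\La$ and $T_{a^{\phi'}}\Lotimes T_{a'}\cong T_{a^{\phi'}a'}$ finish the computation, and the definition of the semidirect product was arranged precisely so that this works. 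The main obstacle will be establishing this commutation relation, and here the two cases matter: in Case I the Nakayama permutation is trivial, the twist genuinely permutes the vertices and hence permutes the generators $T_i$ via $\rho^\phi$, whereas in Case II the automorphism must respect the folding $\iota$, so $\rho^\phi$ acts trivially on the folded vertices and $a^\phi=a$; I would verify the relation at the level of the defining ideals $\I_{t_i}$, tracking how an automorphism acts on the idempotents $e_i^\iota$ and hence on the mutation complexes $\bmu_i^\pm(\La)$.

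Next I would prove surjectivity. The restriction of any two-sided tilting complex of $\La$ to $\Db(\mod\La)$ is a tilting complex, so by Theorem \ref{tilt-bij}(a) every such complex is isomorphic in $\Db(\mod\La)$ to $\bmu_a(\La)\cong T_a$ for a unique $a\in B_{\Delta^{\rm f}}$. Given an arbitrary two-sided tilting complex $X$, I would set $a$ to be this braid-group element and consider $Y:=X\Lotimes T_a^{-1}$, which then restricts to $\La$ in $\Db(\mod\La)$; a complex whose underlying one-sided tilting complex is $\La$ itself is isomorphic to ${}_\phi\La$ for some $\phi\in\Aut(\La)$ by Theorem \ref{tilt-bij}(b) (the endomorphism ring is $\La$, so $Y$ is a stalk complex given by an invertible bimodule, and invertible $\La$-bimodules are exactly the ${}_\phi\La$). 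Thus $X\cong{}_\phi\La\Lotimes T_a=\Theta(\phi,a)$, giving surjectivity.

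Finally, for injectivity, suppose $\Theta(\phi,a)\cong\La$ in $\dpg(\La)$. Restricting to $\Db(\mod\La)$ and using Proposition \ref{operate}(b) together with the bijection of Theorem \ref{tilt-bij}(a), I get $T_a\cong\bmu_a(\La)\cong\La$, forcing $a=1$ by injectivity of the braid-group parametrization; then ${}_\phi\La\cong\La$ in $\Db(\mod\La^{\rm e})$ forces $\phi\in\Inn(\La)$, i.e. $\phi$ is trivial in $\Out(\La)$, by the standard fact (via Lemma \ref{Inn}) that ${}_\phi\La\cong\La$ as bimodules precisely when $\phi$ is inner. Hence the kernel is trivial. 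The delicate point throughout is the interplay between the two cases in the definition of the $\Out(\La)$-action, so I expect the verification of the commutation relation $T_a\Lotimes{}_{\phi'}\La\cong{}_{\phi'}\La\Lotimes T_{a^{\phi'}}$ to be the heart of the argument; everything else is assembling the already-established bijections.
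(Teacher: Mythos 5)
Your overall architecture (homomorphism via a commutation relation, then injectivity and surjectivity from Theorem \ref{tilt-bij} and the twist-classification of Lemma \ref{RZ left}) coincides with the paper's, and your injectivity and surjectivity arguments are essentially the paper's own. But your plan for the key commutation relation $T_a\Lotimes{}_{\phi'}\La\cong{}_{\phi'}\La\Lotimes T_{a^{\phi'}}$ (the paper's Lemma \ref{bimo3}) has a genuine gap. The ideals $\I_{t_i}$ are ideals of the completed \emph{extended} preprojective algebra $\wLa$, not of $\La$, and $T_i$ is defined as $\sigma^{\geq -1}(\La\Lwotimes\I_{t_i}\Lwotimes\La)$. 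An automorphism $\phi$ of $\La$ does not a priori act on $\I_{t_i}$ at all: to move the twist ${}_{\phi^{-1}}(-)_{\phi^{-1}}$ through the $\wLa$-tensor products and onto the ideal, one must first lift $\phi$ to an automorphism $\tilde{\phi}$ of $\wLa$ compatible with the surjection $\wLa\to\La$. This is exactly Proposition \ref{restriction}, whose proof occupies all of Section 5 of the paper, is carried out case-by-case for types $\type{A}_n$ and $\type{D}_n$ via explicit descriptions of $\Aut(\La)$ (Propositions \ref{fix arrowA} and \ref{fix arrow}), and is precisely the reason the theorem is stated only for these types. Your sketch of ``verifying the relation at the level of the defining ideals $\I_{t_i}$, tracking how an automorphism acts on the idempotents'' silently assumes this lifting; if the verification were as routine as you suggest, the theorem would hold for type $\type{E}$ as well, which the paper explicitly leaves open.

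Nor can the gap be closed by the one-sided information you invoke (the action of $\phi$ on the mutation complexes $\bmu_i^{\pm}(\La)$). Two two-sided tilting complexes whose restrictions to $\Db(\mod\La)$ agree are isomorphic only up to a twist ${}_\psi\La$ by Lemma \ref{RZ left}; so a one-sided computation yields only ${}_{\phi^{-1}}(T_a)_{\phi^{-1}}\cong{}_\psi\La\Lotimes T_{a^{\phi}}$ for some unknown $\psi\in\Out(\La)$, and showing that this $\psi$ is trivial is the entire point of working with the bimodule structure over $\wLa$. Until you establish the lifting $\Aut(\La)\to\Aut(\wLa)$, or find another route to the bimodule isomorphisms ${}_{\phi^{-1}}(T_i)_{\phi^{-1}}\cong T_{\rho^\phi(i)}$ (Case I) and ${}_{\phi^{-1}}(T_i)_{\phi^{-1}}\cong T_i$ (Case II), the homomorphism property of $\Theta$ remains unproved.
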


For a proof, we recall the following basic result (see for example \cite[Proposition 2.3]{RZ}).

\begin{lemma}\label{RZ left}
Let $T$ and $T'$ be two-sided tilting complexes in $\Db(\mod\La^{\rm e})$. 
The restriction of $T$ and $T'$ to $\Db(\mod\La)$ are isomorphic if and only if there exists 
$\phi\in\Out(\La)$ such that 
$$T'\cong {}_\phi\La\Lotimes T.$$ 
\end{lemma}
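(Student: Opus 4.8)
The plan is to reduce the equivalence to the single key fact that a two-sided tilting complex whose restriction to $\Db(\mod\La)$ is isomorphic to $\La$ must itself be an invertible bimodule of the form ${}_\phi\La$ (placed in degree $0$). Granting this fact, both directions follow by a short manipulation with derived tensor products.

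For the "if" direction I would observe that ${}_\phi\La$ is free of rank one as a right $\La$-module, so the left twist does not alter the underlying right-module structure: the assignment $t\mapsto 1\otimes t$ gives an isomorphism of right $\La$-modules ${}_\phi\La\otimes_\La T\cong T$ (using $x\otimes t=1\otimes\phi^{-1}(x)t$ to see surjectivity), and since ${}_\phi\La$ is an invertible bimodule this descends to an isomorphism in $\Db(\mod\La)$. Hence the restrictions of $T'\cong{}_\phi\La\Lotimes T$ and of $T$ agree.

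For the "only if" direction, assume the restrictions of $T$ and $T'$ to $\Db(\mod\La)$ are isomorphic. Using Definition \ref{def two-sided} for the inverse $T^{-1}$, I would set $X:=T'\Lotimes T^{-1}$, which is again a two-sided tilting complex and satisfies $X\Lotimes T\cong T'\Lotimes(T^{-1}\Lotimes T)\cong T'$. The first step is to compute the restriction of $X$: the right-module structure of $T'\Lotimes_\La T^{-1}$ comes from the right action on $T^{-1}$, so the restriction of $X$ equals the image of the restriction of $T'$ under the standard functor $-\Lotimes T^{-1}$. Since the restriction of $T'$ is isomorphic to that of $T$, this image is
$$(\text{restriction of }T)\Lotimes T^{-1}\cong\text{restriction of }(T\Lotimes T^{-1})\cong\text{restriction of }\La\cong\La.$$
Thus the restriction of $X$ to $\Db(\mod\La)$ is isomorphic to $\La$.

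The main obstacle, and the heart of the argument, is the key fact: a two-sided tilting complex $X$ with restriction isomorphic to $\La$ is isomorphic in $\Db(\mod\La^{\rm e})$ to ${}_\phi\La$ for some $\phi\in\Out(\La)$. The plan here is first to note that the restriction functor $\Db(\mod\La^{\rm e})\to\Db(\mod\La)$ that forgets the left action is exact, so the underlying right module of $H^n(X)$ is $H^n$ of the restriction of $X$, which vanishes for $n\neq 0$; as a bimodule is zero exactly when its underlying right module is zero, we get $H^n(X)=0$ for $n\neq0$ and hence $X\cong N:=H^0(X)$ concentrated in degree $0$. Then $N$ is an invertible $\La$-bimodule which is free of rank one as a right module. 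Choosing a right-module generator $n_0$ and writing $\lambda n_0=n_0\,\phi(\lambda)$ defines a ring endomorphism $\phi$ of $\La$, bijective since $N$ is invertible (alternatively by a dimension count as $\La$ is finite dimensional and the left action is faithful), so that $N\cong{}_\phi\La$ as bimodules; replacing $n_0$ by $n_0u$ with $u\in\La^\times$ conjugates $\phi$ by $u$, so the class of $\phi$ in $\Out(\La)=\Aut(\La)/\Inn(\La)$ is well defined. Applying this to $X=T'\Lotimes T^{-1}$ yields $\phi\in\Out(\La)$ with $X\cong{}_\phi\La$, whence $T'\cong X\Lotimes T\cong{}_\phi\La\Lotimes T$, completing the proof.
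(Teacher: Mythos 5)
Your proof is correct, but note that the paper does not actually prove this lemma: it is quoted as a known basic result with a pointer to \cite[Proposition 2.3]{RZ}, so there is no internal argument to compare against. What you have written is essentially a self-contained proof of that cited result, and it follows the standard mechanism behind it: reduce to $X:=T'\Lotimes T^{-1}$, check that its restriction to $\Db(\mod\La)$ is $\La$, and then show that a two-sided tilting complex with trivial one-sided restriction is concentrated in degree zero and equal to an invertible bimodule ${}_\phi\La$, with $\phi$ well defined in $\Out(\La)$ up to the choice of free generator. This buys the reader self-containedness where the paper outsources the argument, at the cost of redoing material available in the literature. Two small corrections/improvements. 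First, your balancing formula in the ``if'' direction is off: with the paper's convention that the right action on ${}_\phi\La={}_\phi\La_1$ is untwisted, in ${}_\phi\La\otimes_\La T$ one has $x\otimes t=1\otimes xt$, not $1\otimes\phi^{-1}(x)t$; the map $t\mapsto 1\otimes t$ is still an isomorphism of complexes of right $\La$-modules, so the conclusion stands. Second, the bijectivity of $\phi$ is most cleanly justified directly from Definition \ref{def two-sided}(i): the left-multiplication map $\La\to\RHom_\La(X,X)$ is an isomorphism, and taking $H^0$ identifies it with the composite $\La\to\End_\La(N)\cong\La$, $\lambda\mapsto\phi(\lambda)$, where the second isomorphism is evaluation at the free generator $n_0$; hence $\phi$ is an automorphism, which makes precise your appeal to ``invertibility'' (and to faithfulness of the left action).
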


In the rest of this subsection, we will show that the above $\Theta$ is a group homomorphism.

\begin{lemma}\label{graph auto}
Let $\phi\in\Out(\La)$. 
Then $\rho^\phi$ gives a graph automorphism of $\Delta$.
\end{lemma}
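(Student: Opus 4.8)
The plan is to show that $\rho^\phi$ respects the edge structure of $\Delta$, i.e.\ that $i$ and $j$ are adjacent in $\Delta$ if and only if $\rho^\phi(i)$ and $\rho^\phi(j)$ are adjacent. The key observation is that the adjacency of vertices in the Dynkin diagram $\Delta$ is encoded intrinsically in the representation theory of $\La$, and any algebra automorphism $\phi$ must preserve such intrinsic data. Concretely, since $\phi(e_i)=\lambda e_{\rho^\phi(i)}\lambda^{-1}$, the automorphism $\phi$ sends the simple module $S_i$ to $S_{\rho^\phi(i)}$ (up to isomorphism), and more generally it permutes the indecomposable projectives $e_i\La$ according to $\rho^\phi$. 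Therefore $\phi$ induces a self-equivalence of $\mod\La$ compatible with this relabelling of simples.

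First I would recall that the number of arrows between vertices $i$ and $j$ in the quiver of $\La$ (equivalently, the edges of $\Delta$ together with the doubling inherent in a preprojective algebra) can be read off from $\dim_K \Ext^1_\La(S_i,S_j)$, or equivalently from $e_j(\rad\La/\rad^2\La)e_i$. Since $\phi$ is an algebra automorphism, it carries $\rad\La$ to $\rad\La$ and hence induces an isomorphism
$$e_j(\rad\La/\rad^2\La)e_i \;\cong\; e_{\rho^\phi(j)}(\rad\La/\rad^2\La)e_{\rho^\phi(i)},$$
where I have used $\phi(e_i)=\lambda e_{\rho^\phi(i)}\lambda^{-1}$ and absorbed the inner automorphism (which acts trivially on dimensions of these spaces). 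Taking dimensions shows that the number of arrows $i\to j$ equals the number of arrows $\rho^\phi(i)\to\rho^\phi(j)$. Since the underlying graph $\Delta$ records exactly which pairs of vertices are joined by an arrow in the double quiver, this equality of arrow-counts for all pairs forces $i\stackrel{}{\mbox{---}}j$ in $\Delta$ to hold precisely when $\rho^\phi(i)\stackrel{}{\mbox{---}}\rho^\phi(j)$ does. Hence $\rho^\phi$ is a graph automorphism.

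I expect the main obstacle to be the bookkeeping around the inner automorphism: one must check that conjugation by $\lambda\in\La^\times$ does not alter the adjacency data, which is clear once one passes to dimensions of the $\Ext$-spaces (or equivalently works at the level of isomorphism classes of simples rather than literal idempotents), since inner automorphisms act as identity on the derived/stable invariants and fix isomorphism classes of modules. A secondary point worth stating cleanly is \emph{why} $\rho^\phi$ is a well-defined element of $\mathfrak{S}_n$ depending only on the class of $\phi$ in $\Out(\La)$; this is already supplied by Lemma \ref{Inn} and the preceding Definition, so I would simply invoke that and reduce to verifying the edge-preservation property above. With the $\Ext^1$-dimension argument in hand, the conclusion is immediate and requires no case analysis on the Dynkin type.
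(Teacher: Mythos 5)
Your proposal is correct and follows essentially the same route as the paper: both arguments reduce the claim to the isomorphism $e_i(\rad\La/\operatorname{rad}^2\La)e_j\cong e_{\rho^\phi(i)}(\rad\La/\operatorname{rad}^2\La)e_{\rho^\phi(j)}$ induced by $\phi$ (after absorbing the inner part, which the paper does implicitly by taking $\phi(e_i)=e_{\rho^\phi(i)}$), and then invoke the standard fact that these spaces determine the quiver, hence the edges of $\Delta$. Your explicit handling of the conjugation by $\lambda$ and the reformulation via $\Ext^1$ of simples are just expanded bookkeeping of the same idea.
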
 

\begin{proof}
We write $\rho^\phi=\rho$ for simplicity. 
It is enough to show 
$$e_i(\rad\La/\text{rad}^2\La)e_j\cong e_{\rho(i)}(\rad\La/\text{rad}^2\La)e_{\rho(j)}$$
(see, for example \cite[III.Lemma 2.12]{ASS},\cite[section 11]{HGK}).
Since $\phi\in\Out(\La)$ gives $\phi(e_i)=e_{\rho(i)}$, we have 
$$\phi(e_i(\rad\La/\text{rad}^2\La)e_j)\cong e_{\rho(i)}(\rad\La/\text{rad}^2\La)e_{\rho(j)}.$$
This completes the proof.
\end{proof}

Note that $\rho^\phi$ does not necessarily coincide with the Nakayama permutation in general.

Next, we give the following easy lemma. 

\begin{lemma}\label{rho iso}
For any $\phi\in \Out(\wLa)$ and $i\in {\Delta}_0$, 
 we have an isomorphism in $\Db(\mod\wLa^{\rm e})$ 
$${}_{\phi^{-1}}({\I_{i}})_{\phi^{-1}}\cong{\I}_{\rho^\phi(i)}.$$
\end{lemma}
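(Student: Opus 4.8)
The plan is to produce the isomorphism directly from the automorphism $\phi$ itself, viewing it as a bijection of the ideals $\I_i$ and $\I_{\rho^\phi(i)}$ and checking that it intertwines the twisted bimodule structures. Since both sides are honest $\wLa$-bimodules (not merely complexes), it suffices to exhibit an isomorphism in $\mod\wLa^{\rm e}$; there is no derived subtlety to handle.

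First I would identify the image of the ideal under $\phi$. By the definition of $\rho^\phi$ (Lemma \ref{Inn} applied to $\wLa$), there is a unit $\lambda\in\wLa^\times$ with $\phi(e_i)=\lambda e_{\rho^\phi(i)}\lambda^{-1}$, hence $1-\phi(e_i)=\lambda(1-e_{\rho^\phi(i)})\lambda^{-1}$. Applying the ring automorphism $\phi$ to $\I_i=\wLa(1-e_i)\wLa$ and using that $\lambda$ is a unit (so $\wLa\lambda=\wLa$ and $\lambda^{-1}\wLa=\wLa$), I obtain
$$\phi(\I_i)=\wLa(1-\phi(e_i))\wLa=\wLa\lambda(1-e_{\rho^\phi(i)})\lambda^{-1}\wLa=\wLa(1-e_{\rho^\phi(i)})\wLa=\I_{\rho^\phi(i)}$$
as two-sided ideals of $\wLa$.

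Next I would verify that the restriction $\phi|_{\I_i}\colon\I_i\to\I_{\rho^\phi(i)}$ becomes an isomorphism of bimodules once the source carries the twisted structure ${}_{\phi^{-1}}(\I_i)_{\phi^{-1}}$. For $x\in\I_i$ and $\mu,\mu'\in\wLa$ the twisted actions read $x\cdot\mu=x\phi^{-1}(\mu)$ and $\mu'\cdot x=\phi^{-1}(\mu')x$; multiplicativity of $\phi$ then gives
$$\phi(x\cdot\mu)=\phi(x)\phi(\phi^{-1}(\mu))=\phi(x)\mu,\qquad \phi(\mu'\cdot x)=\phi(\phi^{-1}(\mu'))\phi(x)=\mu'\phi(x),$$
so $\phi$ transports the twisted actions on the source to the untwisted actions on $\I_{\rho^\phi(i)}$. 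As $\phi$ is bijective, this furnishes the claimed isomorphism ${}_{\phi^{-1}}(\I_i)_{\phi^{-1}}\cong\I_{\rho^\phi(i)}$.

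The computation is essentially formal, so I do not expect a genuine obstacle. The only point demanding care is to match the twisting conventions of Notation \ref{main notation} correctly, ensuring that the twists on both the left and right of $\I_i$ are absorbed by $\phi$ rather than leaving a residual inner twist by $\lambda$; this is exactly why the statement uses $\phi^{-1}$ on both sides, and the bookkeeping above confirms that choice.
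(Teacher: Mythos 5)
Your proof is correct and takes essentially the same approach as the paper: the paper's own (one-line) proof likewise uses the map $x\mapsto\phi(x)$ on $\I_i=\wLa(1-e_i)\wLa$ as the bimodule isomorphism ${}_{\phi^{-1}}(\I_i)_{\phi^{-1}}\cong\I_{\rho^\phi(i)}$. Your extra bookkeeping --- absorbing the unit $\lambda$ from Lemma \ref{Inn} to see that $\phi(\I_i)=\I_{\rho^\phi(i)}$ as ideals, and checking compatibility with the twisted actions --- just makes explicit what the paper leaves implicit.
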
 

\begin{proof}
Since $\I_i=\wLa(1-e_i)\wLa$, 
the map $\phi:\I_i\to{\I}_{\rho^\phi(i)}$ given by $x\mapsto \phi(x)$ 
gives an isomorphism ${}_{\phi^{-1}}({\I_{i}})_{\phi^{-1}}\cong{\I}_{\rho^\phi(i)}$ of $\wLa^{\rm e}$-modules.
\end{proof}

Moreover, we use the following result. 

\begin{proposition}\label{restriction}
For any $\phi\in\Aut(\La)$, there exists $\tilde{\phi}\in\Aut(\wLa)$ which makes the following diagram commutative 
$$\xymatrix@C45pt@R15pt{
\wLa\ar[d]_{\rm nat.} \ar[r]^{\tilde{\phi}}   &\ar[d]^{\rm nat.}\wLa  \\
\La \ar[r]^{\phi}  &\La. }$$
In particular, for the above map $\tilde{\phi}$, the map $\tilde{\phi}:{}_{\phi^{-1}}\La_1\to{}_1\La_{\tilde{\phi}}, x\mapsto\tilde{\phi}(x)$ is an isomorphism 
in $\Db(\mod(\La^{\op}\otimes_K\wLa))$.
\end{proposition}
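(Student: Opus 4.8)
The plan is to dispatch the ``In particular'' clause first, since it is formal once the square commutes, and then to concentrate on producing the lift $\tilde\phi$. Write $p\colon\wLa\to\La$ for the natural surjection, so that $\La=\wLa/\langle e_0\rangle$, and write $\bar b:=p(b)$ for $b\in\wLa$. For the bimodule isomorphism I would take the underlying map to be $\phi$ itself (equivalently, the map induced by $\tilde\phi$ on $\La=\wLa/\langle e_0\rangle$). Its left $\La$-linearity from ${}_{\phi^{-1}}\La_1$ to ${}_1\La_{\tilde\phi}$ is automatic, since $\phi(\phi^{-1}(\lambda')x)=\lambda'\phi(x)$. Its right $\wLa$-linearity reads $\phi(x\bar b)=\phi(x)\,\overline{\tilde\phi(b)}$, that is $\phi(x)\phi(\bar b)=\phi(x)\,\overline{\tilde\phi(b)}$, which is exactly the identity $\phi\circ p=p\circ\tilde\phi$ asserted by the commutative square. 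As $\phi$ is bijective, this is the desired isomorphism in $\Db(\mod(\La^\op\otimes_K\wLa))$. Thus everything reduces to constructing $\tilde\phi\in\Aut(\wLa)$ with $p\circ\tilde\phi=\phi\circ p$.

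To build $\tilde\phi$ I would split $\phi$ using Lemma \ref{Inn}. Applied to the primitive idempotents $f_i:=\phi(e_i)$ it gives $\lambda\in\La^\times$ and a permutation $\rho=\rho^\phi$ with $\lambda\,\phi(e_i)\,\lambda^{-1}=e_{\rho(i)}$; writing $\gamma_\mu$ for the inner automorphism $x\mapsto\mu x\mu^{-1}$, we get $\phi=\gamma_{\lambda^{-1}}\circ\psi$ where $\psi:=\gamma_\lambda\circ\phi$ satisfies $\psi(e_i)=e_{\rho(i)}$, and by Lemma \ref{graph auto} the permutation $\rho$ is a graph automorphism of $\Delta$. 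Since liftability of an automorphism along $p$ is preserved under composition, it suffices to lift the inner factor $\gamma_{\lambda^{-1}}$ and the factor $\psi$ separately. The inner factor lifts cleanly, and this is where completeness of $\wLa$ enters: given $u\in\La^\times$, choose a lift $\tilde u\in\wLa$ of $u$ whose component at the vertex $0$ in $\wLa/\rad\wLa\cong\prod_{i\in\widetilde{\Delta}_0}K$ is nonzero (adjust by a scalar multiple of $e_0$). Because $\wLa$ is semiperfect, an element is a unit exactly when all its components in $\wLa/\rad\wLa$ are nonzero; the components of $\tilde u$ at the vertices of $\Delta_0$ coincide with those of the unit $u$ and the component at $0$ has been arranged nonzero, so $\tilde u\in\wLa^\times$. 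Then $\gamma_{\tilde u}\in\Aut(\wLa)$ satisfies $p\circ\gamma_{\tilde u}=\gamma_u\circ p$ because $p(\tilde u)=u$; this lifts every inner automorphism.

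It remains to lift $\psi$, and here the graph symmetry is used: any graph automorphism $\rho$ of the Dynkin diagram $\Delta$ extends to a graph automorphism $\tilde\rho$ of $\widetilde{\Delta}$ fixing the affine vertex $0$, since the affine node is stabilized by $\Aut(\Delta)$. I would work with the presentations of $\La$ and $\wLa$ as (completed) double-quiver algebras modulo the preprojective relations, where $p$ kills $e_0$ together with the two arrows incident to $0$. Define $\tilde\psi$ on generators by $\tilde\psi(e_i):=e_{\tilde\rho(i)}$ for $i\in\widetilde{\Delta}_0$ (so $\tilde\psi(e_0)=e_0$), on each Dynkin arrow the $\wLa$-element obtained by lifting $\psi$ of the corresponding arrow (a combination of Dynkin paths, transported by $\tilde\rho$), and on the arrows incident to $0$ by images still to be fixed. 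One then verifies that $\tilde\psi$ preserves the preprojective relations of $\wLa$, so that it is an automorphism with $p\circ\tilde\psi=\psi\circ p$. Composing the lifts of $\gamma_{\lambda^{-1}}$ and of $\psi$ yields $\tilde\phi$.

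The main obstacle is precisely this last verification of the relations. At each vertex not adjacent to $0$ the preprojective relation involves only Dynkin arrows and is preserved automatically because $\psi$ is an automorphism of $\La$; but at the vertex $0$ and at the vertices adjacent to $0$ the relation also involves the new arrows, so preservation is not formal. The freedom in choosing the images of the arrows at $0$ must be spent to absorb the discrepancy produced by $\tilde\psi$ on the Dynkin part of these relations, and one must check that this choice can be made consistently and leaves no residual obstruction at $0$ itself. Carrying out this matching is the technical heart of the argument.
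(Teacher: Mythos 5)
Your formal reductions are all sound: the ``in particular'' clause is indeed immediate from the commutative square, the decomposition $\phi=\gamma_{\lambda^{-1}}\circ\psi$ via Lemma \ref{Inn} is legitimate, and your lifting of inner automorphisms (adjusting a lift of $u\in\La^\times$ by a scalar multiple of $e_0$ and invoking semiperfectness of the completed algebra $\wLa$) is a correct and nicely explicit justification of a point the paper dismisses as ``clear.'' But the proof stops exactly where the actual content begins: you define $\tilde\psi$ on the Dynkin arrows by ``lifting'' $\psi$ of each arrow, leave the images of the arrows incident to $0$ ``still to be fixed,'' and then state that verifying the preprojective relations of $\wLa$ ``is the technical heart of the argument'' --- without carrying it out. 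That verification is not a routine matching localized at the affine vertex, and the paper's entire Section 5 is devoted to it. The difficulty is that an arbitrary lift of the generator images does \emph{not} satisfy the relations of $\wLa$: for instance, in type $\type{D}$ the element $f(e_1)=e_1+\sum_j c_j(b_1[j])+\sum_j d_j(b_{-1}[j])$ is idempotent in $\La$ only because long paths vanish there, and its lift to $\wLa$ must be corrected to an \emph{infinite} series $\tilde f_\infty(b_1,b_{-1})$ with recursively determined coefficients (Catalan numbers appear) to remain idempotent --- this is precisely why the completion is used, and it shows the corrections are global, affecting generators far from vertex $0$, not just the two affine arrows.

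Moreover, the existence of a consistent choice of corrections is not free: it rests on identities among the structure constants of $\phi$ (such as $c_1+d_1=0$ and $k_m=0$ for even $m$, Lemma \ref{vanish f even}), which the paper can only extract because it first determines $\Aut(\La)$ explicitly --- Iyama's Proposition \ref{fix arrowA} in type $\type{A}$, and the paper's own Proposition \ref{fix arrow} in type $\type{D}$, the latter requiring the explicit bases of $e_i\La e_j$ in Lemma \ref{basis} --- and then proceeds case by case ($\type{D}_4$, $\type{D}_5$, $\type{D}_{n+1}$ even/odd, with a further factorization $f=v\circ u$ and a conjugation trick in the odd cases). Note also that the paper establishes the proposition only for types $\type{A}_n$ and $\type{D}_n$ (it explicitly states that type $\type{E}$ was not checked, which is why Theorem \ref{dpg thm} carries that restriction); your sketch is phrased type-uniformly but, as it stands, completes the argument for no type at all. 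So the proposal is a correct outline of the easy reductions with the essential step missing.
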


\begin{proof}
We will show the first statement in the next section and the second statement easily follows from the first one.
\end{proof}

(Case I). First, assume that the Nakayama permutation of $\La$ is the identity.

Then we give the following observation.

\begin{lemma}\label{bimo1}
Let $\phi\in\Out(\La)$ and $i\in \Delta_0$ $(=\Delta^{\rm f}_0)$. Then we have isomorphisms in $\Db(\mod\Lambda^{\rm e})$.
$${}_{\phi^{-1}}({T_{i}})_{\phi^{-1}}\cong{T}_{\rho^\phi(i)}\ 
\textnormal{and}\ {}_{\phi^{-1}}({T}_{i}^{-1})_{\phi^{-1}}\cong{T}_{\rho^\phi(i)}^{-1}
.$$
\end{lemma}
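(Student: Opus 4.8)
The plan is to pass from the twisted complex back to the underlying bimodule $B_{t_i}=\La\Lwotimes\I_{t_i}\Lwotimes\La$, transfer the twist from $\La$ to $\wLa$ along the lift of Proposition \ref{restriction}, and then invoke Lemma \ref{rho iso}. First I would record that in Case I the Nakayama permutation is the identity, so $t_i=s_i$ and $\I_{t_i}=\I_i$, and that the operation ${}_{\phi^{-1}}(-)_{\phi^{-1}}$ is exact and acts on homology by $H^j({}_{\phi^{-1}}X_{\phi^{-1}})\cong{}_{\phi^{-1}}(H^jX)_{\phi^{-1}}$ without shifting degrees; hence it commutes with the truncation $\sigma^{\geq-1}$. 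It therefore suffices to prove ${}_{\phi^{-1}}(B_{t_i})_{\phi^{-1}}\cong B_{t_{\rho^\phi(i)}}$ in $\Db(\mod\La^{\rm e})$, after which applying $\sigma^{\geq-1}$ delivers the first isomorphism $T_{\rho^\phi(i)}$.

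For the bimodule computation I would write ${}_{\phi^{-1}}(B_{t_i})_{\phi^{-1}}\cong({}_{\phi^{-1}}\La_1)\Lwotimes\I_i\Lwotimes({}_1\La_{\phi^{-1}})$, the two twists acting on the outer copies of $\La$. Using Proposition \ref{restriction} on the left, together with its mirror ${}_1\La_{\phi^{-1}}\cong{}_{\tilde{\phi}}\La_1$ on the right, I would replace the outer factors by ${}_1\La_{\tilde{\phi}}$ and ${}_{\tilde{\phi}}\La_1$; absorbing the $\tilde{\phi}$-twists across the tensor products over $\wLa$ then produces $\La\Lwotimes{}_{\tilde{\phi}^{-1}}(\I_i)_{\tilde{\phi}^{-1}}\Lwotimes\La$. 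Now Lemma \ref{rho iso}, applied to $\tilde{\phi}\in\Out(\wLa)$, gives ${}_{\tilde{\phi}^{-1}}(\I_i)_{\tilde{\phi}^{-1}}\cong\I_{\rho^{\tilde{\phi}}(i)}$; and since $\tilde{\phi}$ lifts $\phi$ and necessarily preserves the kernel $\langle e_0\rangle$ of $\wLa\to\La$, it fixes the extending vertex $0$ and restricts to $\rho^\phi$ on $\Delta_0$, so $\rho^{\tilde{\phi}}(i)=\rho^\phi(i)$. This identifies the expression with $\La\Lwotimes\I_{\rho^\phi(i)}\Lwotimes\La=B_{t_{\rho^\phi(i)}}$, as required.

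Finally, the second isomorphism I would deduce formally rather than recompute. The assignment $X\mapsto{}_{\phi^{-1}}X_{\phi^{-1}}$ equals $({}_{\phi^{-1}}\La_1)\Lotimes X\Lotimes({}_1\La_{\phi^{-1}})$, that is, conjugation by the invertible bimodule ${}_{\phi^{-1}}\La_1$ whose inverse is ${}_1\La_{\phi^{-1}}\cong{}_\phi\La_1$; hence it is a group automorphism of $\dpg(\La)$ and in particular commutes with taking inverses in $\Db(\mod\La^{\rm e})$. Applying it to the already-established ${}_{\phi^{-1}}(T_i)_{\phi^{-1}}\cong T_{\rho^\phi(i)}$ then yields ${}_{\phi^{-1}}(T_i^{-1})_{\phi^{-1}}\cong T_{\rho^\phi(i)}^{-1}$. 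I expect the main obstacle to lie in the second paragraph: keeping the left/right bimodule conventions straight while moving the $\tilde{\phi}$-twist through the derived tensor products over $\wLa$, and in particular pinning down the index identity $\rho^{\tilde{\phi}}(i)=\rho^\phi(i)$, which is exactly where the compatibility of the lift furnished by Proposition \ref{restriction} is essential.
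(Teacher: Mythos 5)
Your proof is correct and follows essentially the same route as the paper's: reduce to the untruncated bimodule $B_{t_i}$ (using that the twist functor is exact and commutes with $\sigma^{\geq -1}$), transfer the twist from $\La$ to $\wLa$ via Proposition \ref{restriction}, absorb it onto $\I_i$, apply Lemma \ref{rho iso}, and deduce the statement for $T_i^{-1}$ formally. In fact you make explicit two points the paper glosses over — the commutation of twisting with truncation, and the index identity $\rho^{\tilde{\phi}}(i)=\rho^{\phi}(i)$, which you correctly justify by observing that $\tilde{\phi}$ preserves the kernel $\langle e_0\rangle$ of $\wLa\to\La$ and hence fixes the extending vertex — and both check out.
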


\begin{proof}
We will show the first statement and the second statement easily follows from the first one.
Recall that $T_i=\sigma^{\geq-1}(B_i)$, where 
$B_i=\La\Lwotimes\I_{i}\Lwotimes\La.$ 
Hence it is enough to show that ${}_{\phi^{-1}}({B_{i}})_{\phi^{-1}}\cong B_{\rho(i)}$. 

Then, we have 
\begin{eqnarray*}
{}_{\phi^{-1}}({B_{i}})_{\phi^{-1}}&\cong&{}_{\phi^{-1}}\La\Lwotimes\I_{i}\Lwotimes\La_{\phi^{-1}}\\
&\cong&(\La{}_{\tilde{\phi}})\Lwotimes\I_{i}\Lwotimes({}_{\tilde{\phi}}\La)\ \ \ \ \  \ \ \ \ (\textnormal{Proposition}\ \ref{restriction})\\
&\cong&(\La\Lwotimes\wLa{}_{\tilde{\phi}})\Lwotimes\I_{i}\Lwotimes({}_{\tilde{\phi}}\wLa\Lwotimes\La)\\
&\cong&\La\Lwotimes({}_{\tilde{\phi}^{-1}}\wLa\Lwotimes\I_{i}\Lwotimes\wLa{}_{\tilde{\phi}^{-1}})\Lwotimes\La\\
&\cong&\La\Lwotimes{}_{\tilde{\phi}^{-1}}(\I_{i}){}_{{\tilde{\phi}}^{-1}}\Lwotimes\La\\
&\cong&\La\Lwotimes\I_{\rho^\phi(i)}\Lwotimes\La\ \ \ \  \ \ \ \ \ \ \ \ (\textnormal{Lemma}\ \ref{rho iso})\\
&\cong&B_{\rho^\phi(i)}.
\end{eqnarray*}

Thus we get the conclusion.
\end{proof}

(Case II). Next, assume that the Nakayama permutation of $\La$ is not the identity.

Then we have the following lemma.
\begin{lemma}\label{bimo2}
Let $\phi\in\Out(\La)$ and $i\in \Delta^{\rm f}$. 
Then we have isomorphisms in $\Db(\mod\Lambda^{\rm e})$
$${}_{\phi^{-1}}({T_{i}})_{\phi^{-1}}\cong{T}_{i}\ 
\textnormal{and}\ {}_{\phi^{-1}}({T}_{i}^{-1})_{\phi^{-1}}\cong{T}_{i}^{-1}
.$$
\end{lemma}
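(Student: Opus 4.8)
The plan is to follow the computation of Lemma \ref{bimo1} essentially verbatim and then exploit the special geometry of the folding in Case II. Since the conjugation ${}_{\phi^{-1}}(-)_{\phi^{-1}}$ is a $t$-exact autoequivalence of $\Db(\mod\La^{\rm e})$, it commutes with the truncation $\sigma^{\geq-1}$, so exactly as in Lemma \ref{bimo1} it suffices to prove ${}_{\phi^{-1}}(B_{t_i})_{\phi^{-1}}\cong B_{t_i}$, where $B_{t_i}=\La\Lwotimes\I_{t_i}\Lwotimes\La$. Writing a reduced expression $t_i=s_{j_1}\cdots s_{j_m}$ and using Proposition \ref{affine braid} to factor $\I_{t_i}\cong\I_{j_1}\otimes_{\wLa}\cdots\otimes_{\wLa}\I_{j_m}$, I would first extend Lemma \ref{rho iso} to this product: twisting both the left and right actions by the same lift $\tilde\phi\in\Aut(\wLa)$ from Proposition \ref{restriction} is compatible with $\otimes_{\wLa}$ (the intermediate twists cancel), so one obtains ${}_{\tilde\phi^{-1}}(\I_{t_i})_{\tilde\phi^{-1}}\cong\I_{\rho^\phi(j_1)}\otimes_{\wLa}\cdots\otimes_{\wLa}\I_{\rho^\phi(j_m)}=\I_{\rho^\phi(t_i)}$, where $\rho^\phi$ now acts on $W_\Delta$ through the graph automorphism it induces. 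Threading this through Proposition \ref{restriction} precisely as in Lemma \ref{bimo1} then yields ${}_{\phi^{-1}}(B_{t_i})_{\phi^{-1}}\cong B_{\rho^\phi(t_i)}$.

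The heart of the argument is to show $\rho^\phi(t_i)=t_i$ in $W_\Delta$. By Lemma \ref{graph auto}, $\rho^\phi$ is a graph automorphism of $\Delta$. For the types occurring in Case II of the theorem, namely $\type{A}_n$ ($n\geq2$) and $\type{D}_{2n+1}$ ($n\geq2$), the automorphism group of the diagram is $\mathbb{Z}/2$, and a direct comparison of Figure \ref{figure} with the list of Nakayama permutations in Subsection \ref{nakayama per} shows that its nontrivial element is exactly the flip realised by $\iota$; hence $\rho^\phi\in\{\id,\iota\}$. Since each $t_i$ lies in $W_\Delta^\iota$ by Theorem \ref{folding}, we have $\iota(t_i)=t_i$, while trivially $\id(t_i)=t_i$, so $\rho^\phi(t_i)=t_i$ in either case. (Concretely, for $t_i=s_is_{\iota(i)}$ with $i,\iota(i)$ non-adjacent this is the commutation $s_is_{\iota(i)}=s_{\iota(i)}s_i$, and for $t_i=s_is_{\iota(i)}s_i$ with an edge it is the braid relation $s_is_{\iota(i)}s_i=s_{\iota(i)}s_is_{\iota(i)}$.) Combining this with the first paragraph gives ${}_{\phi^{-1}}(B_{t_i})_{\phi^{-1}}\cong B_{t_i}$, and applying $\sigma^{\geq-1}$ gives ${}_{\phi^{-1}}(T_i)_{\phi^{-1}}\cong T_i$.

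The statement for $T_i^{-1}$ should then follow formally. The functor ${}_{\phi^{-1}}(-)_{\phi^{-1}}$ is a group automorphism of $\dpg(\La)$ fixing the identity, since ${}_{\phi^{-1}}\La_{\phi^{-1}}\cong\La$ via $\phi$ and twisting is monoidal for $\Lotimes$; hence it sends the inverse $T_i^{-1}$ of $T_i$ to the inverse of ${}_{\phi^{-1}}(T_i)_{\phi^{-1}}\cong T_i$, which is again $T_i^{-1}$.

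The step I expect to be the \emph{main obstacle} is the identification of $\rho^\phi$ with an element of $\{\id,\iota\}$: it relies both on knowing the full diagram-automorphism group for each relevant type (so that $\type{D}_4$, with its larger $\mathfrak{S}_3$ of automorphisms, is correctly excluded as a Case I situation since there $\iota=\id$) and on the $\iota$-invariance that is built into the very definition of the generators $t_i$. Once $\rho^\phi(t_i)=t_i$ is secured, everything else is a transcription of Lemma \ref{bimo1} together with the monoidality of twisting.
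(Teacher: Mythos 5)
Your proof is correct and takes essentially the same route as the paper's: both arguments reduce the claim to the invariance $\rho^\phi(t_i)=t_i$ (via Lemma \ref{graph auto} together with the fact that in Case II every graph automorphism of $\Delta$ lies in $\{\id,\iota\}$ and $t_i\in W_\Delta^\iota$), deduce ${}_{\phi^{-1}}(\I_{t_i})_{\phi^{-1}}\cong\I_{t_i}$ as in Lemma \ref{rho iso}, and then repeat the computation of Lemma \ref{bimo1}. The only difference is presentational: the paper compresses the key step into the remark that $t_i$ is built from an orbit of the Nakayama permutation, whereas you justify it by classifying the diagram automorphisms of the relevant types and invoking the braid/commutation relations, which is exactly the detail the paper leaves implicit.
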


\begin{proof}
By Lemma \ref{graph auto}, $\rho^\phi$ gives a graph automorphism of $\Delta$. 
On the other hand, $t_i$ (defined in subsection \ref{relation}) is given by an orbit of the Nakayama permutation,  so that it is invariant by a graph automorphism.  
Therefore, by the same argument of Lemma \ref{rho iso}, we get
$${}_{\phi^{-1}}(\I_{t_i})_{\phi^{-1}}\cong\I_{t_i}.$$ 
Thus, the same argument of Lemma \ref{bimo1} implies the assertion.
\end{proof}

From now on, we let $\otimes:=\Lotimes$ for simplicity. Then one can easily show the following lemma. 

\begin{lemma}\label{bimo3}
For any $\phi\in\Out(\La)$ and $a\in B_{\Delta^{\rm f}}$, 
we have an isomorphism in $\Db(\mod\Lambda^{\rm e})$
$${}_{\phi^{-1}}({T_a})_{\phi^{-1}}\cong{T}_{a^{\phi}}.$$
\end{lemma}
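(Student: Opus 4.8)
The plan is to reduce the statement to the single-generator case already established in Lemmas \ref{bimo1} and \ref{bimo2}, and then propagate it across a word in $B_{\Delta^{\rm f}}$ by an induction on word length. Writing $a=a_{i_1}^{\epsilon_{i_1}}\cdots a_{i_k}^{\epsilon_{i_k}}$, the target complex is $T_a=T_{i_1}^{\epsilon_{i_1}}\otimes\cdots\otimes T_{i_k}^{\epsilon_{i_k}}$ by Definition \ref{def two-sided set}. The key computational input will be a \emph{twisting formula} for how the bimodule twist ${}_{\phi^{-1}}(-)_{\phi^{-1}}$ interacts with the derived tensor product $\otimes=\Lotimes$, namely
\begin{equation*}
{}_{\phi^{-1}}(X\otimes Y)_{\phi^{-1}}\cong {}_{\phi^{-1}}(X)_{\phi^{-1}}\otimes{}_{\phi^{-1}}(Y)_{\phi^{-1}}
\end{equation*}
for $X,Y\in\Db(\mod\La^{\rm e})$. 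This holds because the right $\La$-action on $X$ is glued to the left $\La$-action on $Y$ in forming $X\otimes_\La Y$, and twisting both by $\phi$ on that shared side cancels, leaving only the outer twists.

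Granting this formula, the inductive step is immediate. First I would handle the base case $k=1$: for a single generator $a=a_i^{\epsilon_i}$, Lemma \ref{bimo1} (Case I) or Lemma \ref{bimo2} (Case II) gives ${}_{\phi^{-1}}(T_i^{\epsilon_i})_{\phi^{-1}}\cong T_{\rho^\phi(i)}^{\epsilon_i}$ in Case I and $\cong T_i^{\epsilon_i}$ in Case II, which is exactly $T_{(a_i^{\epsilon_i})^\phi}$ by the definition of the action of $\Out(\La)$ on $B_{\Delta^{\rm f}}$. For the inductive step, split $a=a'\cdot a_{i_k}^{\epsilon_{i_k}}$ and compute
\begin{align*}
{}_{\phi^{-1}}(T_a)_{\phi^{-1}}
&\cong{}_{\phi^{-1}}(T_{a'})_{\phi^{-1}}\otimes{}_{\phi^{-1}}(T_{i_k}^{\epsilon_{i_k}})_{\phi^{-1}}\\
&\cong T_{(a')^\phi}\otimes T_{(a_{i_k}^{\epsilon_{i_k}})^\phi}\\
&\cong T_{(a')^\phi (a_{i_k}^{\epsilon_{i_k}})^\phi}\\
&\cong T_{a^\phi},
\end{align*}
using the twisting formula in the first step, the induction hypothesis together with the base case in the second, Proposition \ref{operate}(a) (the group homomorphism $a\mapsto T_a$) in the third, and the fact that $\phi\mapsto(-)^\phi$ respects concatenation of words in the last. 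Since the action on each letter multiplies indices letter-by-letter without altering the exponents, $(a')^\phi(a_{i_k}^{\epsilon_{i_k}})^\phi=a^\phi$ directly from the definition.

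The step I expect to be the genuine obstacle — and the only one requiring care — is justifying the twisting formula rigorously at the derived level. The naive bimodule identity is clear, but since $\otimes$ is the \emph{derived} tensor product one must check that the twist functor ${}_{\phi^{-1}}(-)_{\phi^{-1}}$ commutes with $-\Lotimes-$; the clean way is to observe that this twist is the standard autoequivalence given by tensoring with the invertible bimodule ${}_{\phi^{-1}}\La_1$ on the left and ${}_1\La_{\phi^{-1}}$ on the right, and such invertible bimodule functors are monoidal for $\Lotimes$ by associativity of the derived tensor product together with ${}_1\La_{\phi^{-1}}\Lotimes{}_{\phi^{-1}}\La_1\cong\La$. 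I would phrase this as a short preliminary observation so that the main induction reads as a formal consequence. Everything else is bookkeeping: the compatibility between the two cases is already absorbed into the definition of $a^\phi$, so no separate casework beyond invoking Lemmas \ref{bimo1} and \ref{bimo2} at the base step is needed.
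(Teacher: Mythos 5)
Your proof is correct and is essentially the paper's argument: the paper also reduces to Lemmas \ref{bimo1} and \ref{bimo2}, rewrites each ${}_{\phi^{-1}}(T_{i_j}^{\epsilon_{i_j}})_{\phi^{-1}}$ as ${}_{\phi^{-1}}\La\Lotimes T_{i_j}^{\epsilon_{i_j}}\Lotimes\La_{\phi^{-1}}$, and cancels the adjacent factors $\La_{\phi^{-1}}\Lotimes{}_{\phi^{-1}}\La\cong\La$, exactly the invertible-bimodule mechanism behind your twisting formula. The only difference is presentational: the paper telescopes the whole word in one chain of isomorphisms instead of organizing it as an induction on word length.
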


\begin{proof}
We write $\rho^\phi=\rho$ for simplicity. 
Let $a=a_{i_1}^{\epsilon_{i_1}}\cdots a_{i_k}^{\epsilon_{i_k}}\in B_{\Delta^{\rm f}}$. 
We first consider the case I.
Using Lemma \ref{bimo1}, we have isomorphisms
\begin{eqnarray*}
{T}_{a^{\phi}}&\cong&{T}_{\rho(i_1)}^{\epsilon_{i_1}}\otimes\cdots\otimes{T}_{\rho(i_k)}^{\epsilon_{i_k}}\\
&\cong&({}_{\phi^{-1}}\La\otimes{T}_{i_1}^{\epsilon_{i_1}}\otimes{}\La_{{\phi}^{-1}})\otimes\cdots\otimes({}_{\phi^{-1}}\La\otimes{T}_{i_k}^{\epsilon_{i_k}}\otimes\La_{{\phi}^{-1}})\\
&\cong&{}_{\phi^{-1}}\La\otimes({T}_{i_1}^{\epsilon_{i_1}}\otimes\cdots\otimes{T}_{i_k}^{\epsilon_{i_k}})\otimes{}\La_{{\phi}^{-1}}\\ 
&\cong&{}_{\phi^{-1}}({T_a})_{\phi^{-1}}.
\end{eqnarray*}

The proof of the case II can be shown similarly.
\end{proof}

Finally, we give a proof of Theorem \ref{dpg thm}.

\begin{proof}[Proof of Theorem \ref{dpg thm}]
For $(\phi,a),(\phi',a')\in\Out(\La)\ltimes B_{\Delta^{\rm f}}$, we have
$(\phi,a)\cdot(\phi',a')=(\phi\phi',a^{\phi'}a')$. 
Using Lemma \ref{bimo3}, 
we have 
\begin{eqnarray*}
\Theta(\phi\phi',a^{\phi'}a')&\cong&{}_{\phi\phi'}\La\otimes{T}_{a^{\phi'}a'}\\
&\cong&{}_{\phi}\La\otimes{}_{\phi'}\La\otimes{T}_{a^{\phi'}}\otimes{T}_{a'}\\
&\cong&{}_{\phi}\La\otimes{}_{\phi'}\La\otimes({}_{{(\phi')}^{-1}}\La\otimes{T}_a\otimes\La_{{(\phi')}^{-1}})\otimes{T}_{a'}\\
&\cong&{}_{\phi}\La\otimes{T}_a\otimes{}_{\phi'}\La\otimes{T}_{a'}\\
&\cong&\Theta(\phi,a)\Theta(\phi',a').
\end{eqnarray*}

Thus the map is a group homomorphism. 

We will show the injectivity. 
Assume that $\Theta(\phi,a)={}_{\phi}\La_1\otimes{T_a}\cong\La$ in $\Db(\mod\La^{\rm e})$. 
Then Theorem \ref{tilt-bij} implies $a=\id$. 
Thus we get ${}_{\phi}\La_1\cong\La$ and hence $\phi\in\Inn(\La)$.  

Next we will show the surjectivity. 
Take $X$ in $\dpg(\Lambda)$. 
Then Theorem \ref{tilt-bij} implies that there exists $a\in B_{\Delta^{\rm f}}$ such that ${T_a}\cong X$ in $\Db(\mod\La)$. 
Then, by Lemma \ref{RZ left}, there exists $\phi\in\Out(\La)$ such that ${}_{\phi}\La_1\otimes{T_a}\cong X$ in $\Db(\mod\La^{\rm e})$.  
\end{proof}


\section{Automorphism groups}
In this section, we give a proof of Proposition  \ref{restriction}. 
We divide the  situation into type $\type{A}_n$ and $\type{D}_n$.
We believe that a similar result holds for $\type{E}_n$ ($n=6,7,8$), though we did not check it because of the difficulty of the calculation of automorphism groups for this type. 

\subsection{The case of type $\type{A}_{n}$}
Let $\La$ be a preprojective algebra of $\type{A}_{n}$ which is given by the following quiver

$$\xymatrix@C50pt@R20pt{
1 \ar[r]^{a_1} &2 \ar[r]^{a_2}\ar@<1ex>[l]^{b_{n-1}}&\ar@<1ex>[l]^{b_{n-2}}\cdots\ar[r]^{a_{n-2}}&n-1 \ar@<1ex>[l]^{b_{2}}\ar[r]^{a_{n-1}}& \ar@<1ex>[l]^{b_1} n.} $$

Let $p$ be the automorphism of $\La$ defined by 
$p(e_i)=e_{n+1-i}$, 
$p(a_i):=b_{i}$ and  $p(b_i):=a_{i}$. Then we have the following result by Iyama \cite{I}.

\begin{proposition}\cite[6.2.2]{I}\label{fix arrowA}
Let $\La$ be a preprojective algebra of type $\type{A}_{n}$ and 
$H:=\{g\in\Aut(\Lambda)\ |\ g$ fixes any $a_i$ and $e_i$ $\}$. Then 
\begin{itemize}
\item[(a)] $\Aut(\La)=\langle\Inn(\Lambda), p, H\rangle.$
\item[(b)] Let $m$ be the maximal integer which does not exceed $n/2$. 
For any $f\in H$, there exist $k_1\in K^\times$ and $k_j\in K$ ($1<j\leq m$) such that 
$$f(b_i)=\sum_{j=1}^mk_j(b_ia_i)^{j-1}b_i\ \ \ (1\leq i\leq n-1).$$
\end{itemize}
\end{proposition}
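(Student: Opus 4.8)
The plan is to determine $\Aut(\La)$ by understanding how any automorphism must act on the standard generators $e_i, a_i, b_i$ of the path algebra modulo the preprojective relations. The guiding principle is that an automorphism is constrained first by its action on idempotents (up to an inner automorphism it permutes them according to a graph automorphism), then by its action on the degree-one part $\rad\La/\rad^2\La$, and finally by its action on the arrows themselves subject to the relations $\sum (a_ib_i - b_{i-1}a_{i-1}) = 0$. Since type $\type{A}_n$ has a unique nontrivial graph automorphism (the reflection $i \mapsto n+1-i$), realized by $p$, the first reduction should let me assume, after composing with an inner automorphism and possibly $p$, that the automorphism fixes every $e_i$ and sends each arrow to a scalar multiple of itself modulo higher-degree terms.

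First I would prove part (a). Using Lemma \ref{Inn}, after composing with an inner automorphism I may assume $g$ fixes the primitive idempotents up to the permutation $\rho^g$; by Lemma \ref{graph auto} $\rho^g$ is a graph automorphism of $\type{A}_n$, hence either trivial or the reflection. Composing with $p$ if necessary reduces to the case $g(e_i) = e_i$ for all $i$. Such a $g$ preserves each space $e_i(\rad\La/\rad^2\La)e_{i+1} = K\overline{a_i}$ and $e_{i+1}(\rad\La/\rad^2\La)e_i = K\overline{b_i}$, so $g(a_i) = \alpha_i a_i + (\text{higher degree})$ and $g(b_i) = \beta_i b_i + (\text{higher degree})$ with $\alpha_i, \beta_i \in K^\times$. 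The next step is to rescale: I would exhibit a diagonal inner automorphism (conjugation by $\lambda = \sum_i \mu_i e_i$ for suitable nonzero scalars $\mu_i$) that absorbs the leading coefficients $\alpha_i$, so that after a further inner composition I may assume $g$ fixes each $a_i$ exactly. This places $g$ in the subgroup $H$, giving the decomposition $\Aut(\La) = \langle \Inn(\La), p, H\rangle$.

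For part (b) I would analyze $f \in H$ directly. Since $f$ fixes every $e_i$ and every $a_i$, and $f(b_i)$ lies in $e_{i+1}\La e_i$, I would expand $f(b_i)$ in the basis of that space. The key structural input is that $e_{i+1}\La e_i$ is spanned by the paths $(b_i a_i)^{j-1} b_i$ — that is, alternating paths from $i{+}1$ to $i$ — which follows from the known radical-layer description of preprojective algebras of type $\type{A}_n$ and the nilpotency bound controlled by $m = \lfloor n/2 \rfloor$. Writing $f(b_i) = \sum_j k_j^{(i)} (b_i a_i)^{j-1} b_i$, I would then impose that $f$ respects the preprojective relation and commutes appropriately across vertices; the relations force the coefficients to be \emph{independent of $i$}, yielding common scalars $k_1, \ldots, k_m$, with $k_1 \in K^\times$ since $f$ is invertible (the leading term must survive).

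The main obstacle I anticipate is part (b): establishing that a single family of scalars $k_1,\dots,k_m$ works simultaneously for all $i$, rather than $i$-dependent coefficients. This requires carefully propagating the mesh relations $a_ib_i = b_{i-1}a_{i-1}$ (the defining relations at interior vertices) through the formula for $f(b_i)$ and checking that $f$ remains a well-defined algebra endomorphism — the compatibility conditions at each vertex must collapse the a priori independent coefficient sequences into one. A secondary technical point is verifying the precise basis of $e_{i+1}\La e_i$ and the exact range of exponents (hence the appearance of $m$), which depends on the explicit nilpotency structure of the type $\type{A}_n$ preprojective algebra; this is where I would lean on Iyama's computation \cite{I} rather than redoing it from scratch.
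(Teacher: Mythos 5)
Your plan for part (b) is sound and is essentially the argument the paper itself uses for the type $\type{D}$ analogue (Proposition \ref{fix arrow}(b)): expand $f(b_i)$ in the alternating-path basis of $e_{i+1}\La e_{i}$, use the zero relation at the boundary vertex to pin down $f(b_{n-1})=k_1b_{n-1}$ with $k_1\in K^\times$, and then propagate the mesh relations $f(b_{i})a_{i}=a_{i-1}f(b_{i-1})$ inductively to force a single coefficient sequence $k_1,\dots,k_m$ independent of $i$. (Note the paper does not reprove the $\type{A}_n$ statement at all — it quotes Iyama \cite[6.2.2]{I} — so the honest comparison is with the proof of the $\type{D}$ analogue in the following subsection, which is exactly your induction.)

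The genuine gap is in part (a), at the step where you pass from ``$g(a_i)=\alpha_i a_i+(\text{higher degree})$'' to ``$g$ fixes each $a_i$ exactly'' by conjugating with a \emph{diagonal} unit $\lambda=\sum_i\mu_ie_i$. Diagonal conjugation acts on the entire space $e_i\La e_{i+1}$ by the single scalar $\mu_i\mu_{i+1}^{-1}$: it rescales $a_i$ and every higher term $(a_ib_i)^{j}a_i$ by the same factor, so it can normalize the leading coefficients $\alpha_i$ to $1$ but can never delete the higher-degree part of $g(a_i)$ — and such terms genuinely occur (the same mechanism that makes $H$ nontrivial produces automorphisms fixing all $e_i$ with $g(a_i)=a_i+(a_ib_i)a_i+\cdots$). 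What is needed, and what the paper does in proving Proposition \ref{fix arrow}(a) via Lemma \ref{basis}(a), is the structural identity $e_i\La e_{i+1}=a_i\,(e_{i+1}\La e_{i+1})$ (valid in type $\type{A}$ as well): it gives $g(a_i)=a_iu_{i+1}$ with $u_{i+1}$ a \emph{unit} of $e_{i+1}\La e_{i+1}$, and one then constructs non-scalar units recursively along the chain, setting $\lambda_1:=e_1$ and solving $\lambda_ig(a_i)=a_i\lambda_{i+1}$ for $\lambda_{i+1}\in(e_{i+1}\La e_{i+1})^{\times}$, which is possible precisely because $\lambda_ig(a_i)\in e_i\La e_{i+1}=a_i(e_{i+1}\La e_{i+1})$. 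Conjugation by the non-diagonal unit $\lambda:=\sum_i\lambda_i$ then fixes every $a_i$ and lands $g$ in $H$. Without this recursion your reduction to $H$ does not go through, so as written part (a) is unproved.
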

Then we give a proof of Proposition \ref{restriction} as follows.

\begin{proof}
Let $\widetilde{\La}$ be the preprojective algebra of the following quiver
$$\xymatrix@C50pt@R20pt{&&0 \ar@<-1ex>[lldd]_{a_0}\ar[rrdd]_{b_0}&&\\
&&&&\\
1\ar[rruu]_{b_n} \ar[r]^{a_1} &2 \ar[r]^{a_2}\ar@<1ex>[l]^{b_{n-1}}&\ar@<1ex>[l]^{b_{n-2}}\cdots\ar[r]^{a_{n-2}}&n-1 \ar@<1ex>[l]^{b_{2}}\ar[r]^{a_{n-1}}& \ar@<1ex>[l]^{b_1} \ar@<-1ex>[lluu]_{a_n}n.} $$

The statement for $f\in \Inn(\La)$ and $p$ is clear. 
For $f\in H$, we define $\widetilde{f}\in\Aut(\wLa)$ by 
$\widetilde{f}(e_i):=e_i$, $\widetilde{f}(a_i):=a_i$ and $\widetilde{f}(b_i):=\sum_{j=1}^mk_j(b_ia_i)^{j-1}b_i$ for any $0\leq i\leq n$. 
Then it satisfies the commutative relations, and this gives a desired morphism.
\end{proof}

\subsection{Automorphisms of $\type{D}$}

Let $\La$ be a preprojective algebra of $\type{D}_{n+1}$ $(n\geq 3)$ given by the following quiver 

$$\xymatrix@C50pt@R20pt{&&&-1\ar@<1ex>[d]^{b_{-1}}&\\
n \ar[r]^{a_{n-1}} &n-1 \ar[r]^{a_{n-2}}\ar@<1ex>[l]^{b_{n-1}}&\ar@<1ex>[l]^{b_{n-2}}\cdots\ar[r]^{a_{2}}&2 \ar@<1ex>[l]^{b_{2}}\ar[r]^{a_{1}}\ar[u]^{a_{-1}}& \ar@<1ex>[l]^{b_{1}}1.} $$

In the case of $\type{D}_{4}$, let $p$ be the automorphism of $\La$
with $p^3=\id$ defined by 
$p(a_1)=a_{-1}$,  $p(a_{-1})=b_{2}$, 
$p(b_1)=b_{-1}$,  $p(b_{-1})=a_{2}$, 
$p(e_1)=e_{-1}$, $p(e_{-1})=e_{3}$ and $p(e_2)=e_2$. 
In the case of $\type{D}_{n+1}$ with $n\geq 4$, we let $p=\id$.

We recall some properties from \cite{IRRT}.
The indecomposable projective module $e_{\pm 1}\La$ is given by

\begin{equation}\label{P_1}{\tiny\xymatrix@R1em@C2.5em{
\pm1\ar[d]&&&&&\\
2\ar[r]\ar[d]&\mp1\ar[d]&&&&\\
3\ar[r]\ar[d]&2\ar[r]\ar[d]&\pm1\ar[d]&&&\\
\raisebox{-2pt}[8pt][3pt]{\vdots}\ar[d]&\raisebox{-2pt}[8pt][3pt]{\vdots}\ar[d]&\raisebox{-2pt}[8pt][3pt]{\vdots}\ar[d]&\raisebox{-2pt}{$\ddots$}&&\\
n{-}2\ar[r]\ar[d]&n{-}3\ar[r]\ar[d]&n{-}4\ar[r]\ar[d]&\cdots\ar[r]&\pm({-}1)^{n+1}\ar[d]&\\
n{-}1\ar[r]\ar[d]&n{-}2\ar[r]\ar[d]&n{-}3\ar[r]\ar[d]&\cdots\ar[r]&2\ar[r]\ar[d]&\mp({-}1)^{n+1}\ar[d]&\\
n{}\ar[r]&n{-}1\ar[r]&n{-}2\ar[r]&\cdots\ar[r]&3\ar[r]&2\ar[r]&\pm({-}1)^{n+1}
}}\end{equation}
where each number $i$ shows a $K$-vector space $K$ lying on the vertex $i$, and each arrow is the identity map of $K$. 
Moreover, let $\alpha$ and $\beta$ be scalars satisfying $\alpha+\beta=1$.
Then $e_i\La$ for $2\leq i\leq n$ is given by
\begin{equation}\label{P_i}
{\tiny\begin{xy}
0;<6pt,0pt>:<0pt,6pt>::
(0,0)*+{n}="11",
(0,5)*+{n{-}1}="12",
(0,10)*+{\raisebox{-2pt}[7pt][4pt]{\vdots}}="13",
(0,15)*+{i{+}2}="14",
(0,20)*+{i{+}1}="15",
(0,25)*+{i}="16",
(5,0)*+{n{-}1}="21",
(5,5)*+{n{-}2}="22",
(5,10)*+{\raisebox{-2pt}[7pt][4pt]{\vdots}}="23",
(5,15)*+{i{+}1}="24",
(5,20)*+{i}="25",
(5,25)*+{i{-}1}="26",
(10,0)*+{\cdots}="31",
(10,5)*+{\cdots}="32",
(10,10)*+{\cdots}="33",
(10,15)*+{\cdots}="34",
(10,20)*+{\cdots}="35",
(10,25)*+{\cdots}="36",
(16,0)*+{n{-}i{+}1}="41",
(16,5)*+{n{-}i}="42",
(16,10)*+{\raisebox{-2pt}[7pt][4pt]{\vdots}}="43",
(16,15)*+{4}="44",
(16,20)*+{3}="45",
(16,25)*+{2}="46",
(23,0)*+{n{-}i}="51",
(23,5)*+{n{-}i{-}1}="52",
(23,10)*+{\raisebox{-2pt}[7pt][4pt]{\vdots}}="53",
(23,15)*+{3}="54",
(23,20)*+{2}="55",
(24.5,26)*+{1}="56",
(21.5,24)*+{{-}1}="56-",
(29.5,0)*+{n{-}i{-}1}="61",
(29.5,5)*+{n{-}i{-}2}="62",
(29.5,10)*+{\raisebox{-2pt}[7pt][4pt]{\vdots}}="63",
(29.5,15)*+{2}="64",
(31,21)*+{{-}1}="65",
(28,19)*+{1}="65-",
(29.5,25)*+{2}="66",
(36,0)*+{n{-}i{-}2}="71",
(36,5)*+{n{-}i{-}3}="72",
(36,10)*+{\raisebox{-2pt}[7pt][4pt]{\vdots}}="73",
(37.5,16)*+{1}="74",
(34.5,14)*+{{-}1}="74-",
(36,20)*+{{}2}="75",
(36,25)*+{{}3}="76",
(42,0)*+{n{-}i{-}3}="81",
(42,5)*+{n{-}i{-}4}="82",
(43,12)="83",
(42,10)*+{\raisebox{-2pt}[7pt][4pt]{\vdots}}="83middle",
(41,11)="83-",
(42,15)*+{{}2}="84",
(42,20)*+{{}3}="85",
(42,25)*+{{}4}="86",
(47,0)*+{\cdots}="91",
(47,5)*+{\cdots}="92",
(47,10)*+{\cdots}="93",
(47,15)*+{\cdots}="94",
(47,20)*+{\cdots}="95",
(47,25)*+{\cdots}="96",
(52,0)*+{i-1}="a1",
(52,5)*+{i}="a2",
(52,10)*+{\raisebox{-2pt}[7pt][4pt]{\vdots}}="a3",
(52,15)*+{n-3}="a4",
(52,20)*+{n-2}="a5",
(52,25)*+{n-1}="a6",
(57,0)*+{i}="b1",
(57,5)*+{i+1}="b2",
(57,10)*+{\raisebox{-2pt}[7pt][4pt]{\vdots}}="b3",
(57,15)*+{n-2}="b4",
(57,20)*+{n-1}="b5",
(57,25)*+{n}="b6",
\ar@<0pt>"12";"11",
\ar@<0pt>"13";"12",
\ar@<0pt>"14";"13",
\ar@<0pt>"15";"14",
\ar@<0pt>"16";"15",
\ar@<0pt>"22";"21",
\ar@<0pt>"23";"22",
\ar@<0pt>"24";"23",
\ar@<0pt>"25";"24",
\ar@<0pt>"26";"25",
\ar@<0pt>"42";"41",
\ar@<0pt>"43";"42",
\ar@<0pt>"44";"43",
\ar@<0pt>"45";"44",
\ar@<0pt>"46";"45",
\ar@<0pt>"52";"51",
\ar@<0pt>"53";"52",
\ar@<0pt>"54";"53",
\ar@<0pt>"55";"54",
\ar@<0pt>^(0.55)\alpha"56";"55",
\ar@<0pt>_(0.4)\beta"56-";"55",
\ar@<0pt>"62";"61",
\ar@<0pt>"63";"62",
\ar@<0pt>"64";"63",
\ar@<0pt>^(0.55)\alpha"65";"64",
\ar@<0pt>_(0.4)\beta"65-";"64",
\ar@<0pt>^\beta"66";"65",
\ar@<0pt>_{-\alpha}"66";"65-",
\ar@<0pt>"72";"71",
\ar@<0pt>"73";"72",
\ar@<0pt>^(0.55)\alpha"74";"73",
\ar@<0pt>_(0.4)\beta"74-";"73",
\ar@<0pt>^\beta"75";"74",
\ar@<0pt>_{-\alpha}"75";"74-",
\ar@<0pt>"76";"75",
\ar@<0pt>"82";"81",
\ar@<0pt>"83middle";"82",
\ar@<0pt>^(0.6)\beta"84";"83",
\ar@<0pt>_(0.7){-\alpha}"84";"83-",
\ar@<0pt>"85";"84",
\ar@<0pt>"86";"85",
\ar@<0pt>"a2";"a1",
\ar@<0pt>"a3";"a2",
\ar@<0pt>"a4";"a3",
\ar@<0pt>"a5";"a4",
\ar@<0pt>"a6";"a5",
\ar@<0pt>"b2";"b1",
\ar@<0pt>"b3";"b2",
\ar@<0pt>"b4";"b3",
\ar@<0pt>"b5";"b4",
\ar@<0pt>"b6";"b5",

\ar@<0pt>"11";"21",
\ar@<0pt>"12";"22",
\ar@<0pt>"14";"24",
\ar@<0pt>"15";"25",
\ar@<0pt>"16";"26",
\ar@<0pt>"21";"31",
\ar@<0pt>"22";"32",
\ar@<0pt>"24";"34",
\ar@<0pt>"25";"35",
\ar@<0pt>"26";"36",
\ar@<0pt>"31";"41",
\ar@<0pt>"32";"42",
\ar@<0pt>"34";"44",
\ar@<0pt>"35";"45",
\ar@<0pt>"36";"46",
\ar@<0pt>"41";"51",
\ar@<0pt>"42";"52",
\ar@<0pt>"44";"54",
\ar@<0pt>"45";"55",
\ar@<0pt>"46";"56",
\ar@<0pt>"46";"56-",
\ar@<0pt>"51";"61",
\ar@<0pt>"52";"62",
\ar@<0pt>"54";"64",
\ar@<0pt>"55";"65",
\ar@<0pt>"55";"65-",
\ar@<0pt>"56";"66",
\ar@<0pt>_{-1}"56-";"66",
\ar@<0pt>"61";"71",
\ar@<0pt>"62";"72",
\ar@<0pt>"64";"74",
\ar@<0pt>"64";"74-",
\ar@<0pt>"65";"75",
\ar@<0pt>_{-1}"65-";"75",
\ar@<0pt>"66";"76",
\ar@<0pt>"71";"81",
\ar@<0pt>"72";"82",
\ar@<0pt>"74";"84",
\ar@<0pt>_{-1}"74-";"84",
\ar@<0pt>"75";"85",
\ar@<0pt>"76";"86",
\ar@<0pt>"81";"91",
\ar@<0pt>"82";"92",
\ar@<0pt>"84";"94",
\ar@<0pt>"85";"95",
\ar@<0pt>"86";"96",
\ar@<0pt>"91";"a1",
\ar@<0pt>"92";"a2",
\ar@<0pt>"94";"a4",
\ar@<0pt>"95";"a5",
\ar@<0pt>"96";"a6",
\ar@<0pt>"a1";"b1",
\ar@<0pt>"a2";"b2",
\ar@<0pt>"a4";"b4",
\ar@<0pt>"a5";"b5",
\ar@<0pt>"a6";"b6",
\end{xy}}\end{equation} 
where each number $i$ shows a $K$-vector space $K$ lying on the vertex $i$. 
Each unlabelled arrow is the identity map of $K$ and
each arrow labelled by a scalar $\gamma$ is a linear map multiplying by $\gamma$ \cite[Lemma 6.9]{IRRT}.

The next lemma is a direct consequence of (\ref{P_i}).

\begin{lemma}\label{basis}
\begin{itemize}
\item[(a)] For $3\leq i\leq n$, a basis of $e_i\La e_{i-1}$ is given by 
$$\{a_{i-1}(b_{i-1}a_{i-1})^{j-1},\ \ (a_{i-1}b_{i-1})^{j-1}a_{i-1}a_{i-2}\cdots a_1b_1b_2\cdots b_{i-2}
\ \ (1\leq j\leq n-i+1)\}.$$

In particular, we have  
$e_{i}\Lambda e_{i-1}=a_{i-1}(e_{i-1}\La e_{i-1})$.

\item[(b)] A basis of $e_2\La e_{\pm 1}$ is given by 
$$\{(b_{2}a_{2})^{j-1}a_{\pm 1}\ \ (1\leq j\leq n-1)\}.$$

\item[(c)] For $2\leq i\leq n-1$, a basis of $e_i\La e_{i+1}$ is given by 
$$\{(b_{i}a_{i})^{j-1}b_{i},\ \ (b_{i}a_{i})^{j-1}a_{i-1}a_{i-2}\cdots a_1b_1b_2\cdots b_{i}
\ \ (1\leq j\leq n-i)\}.$$
\end{itemize}
\end{lemma}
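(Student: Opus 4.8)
The plan is to read everything off the explicit diagrams (\ref{P_1}) and (\ref{P_i}). These diagrams exhibit a $K$-basis of each indecomposable projective $e_i\La$, one basis vector per node, and each node is labelled by the vertex of $\Delta$ on which it sits. Since $e_i\La=\bigoplus_{j}e_i\La e_j$ as a right $\La$-module and $e_i\La e_j$ is spanned by the nodes carrying the label $j$, we get
$$\dim_K e_i\La e_j=\#\{\text{nodes labelled }j\text{ in the diagram of }e_i\La\}.$$
Thus the lemma reduces to two tasks: first, count the nodes labelled $i-1$ (resp.\ $\pm1$, resp.\ $i+1$) and check that this equals the cardinality of the proposed set; second, exhibit for each such node a path monomial starting at the top node $i$ that realises it, and check that these are exactly the monomials listed.

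For the second task I would trace directed walks in the diagram, recalling that the horizontal arrows are the $a_s$ and the vertical arrows the $b_s$ (together with the two special arrows $a_{-1},b_{-1}$ at the branch point), so that a monomial applied to the top node $e_i$ is read as the corresponding walk. For part (a) the nodes labelled $i-1$ fall into two routes: those reached by oscillating along the diagonal near the top, which give $a_{i-1}(b_{i-1}a_{i-1})^{j-1}$, and those reached by first descending to the branch vertex and climbing back, which give $(a_{i-1}b_{i-1})^{j-1}a_{i-1}a_{i-2}\cdots a_1b_1\cdots b_{i-2}$; together these account for all $2(n-i+1)$ nodes. Parts (b) and (c) are the same bookkeeping for the nodes labelled $\pm1$ and $i+1$, with the roles of the $a_s$ and $b_s$ interchanged. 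Because distinct nodes give linearly independent vectors of $e_i\La$, the listed monomials are automatically linearly independent, and having the right cardinality they form a basis. The ``in particular'' clause of (a) then follows by observing that every listed monomial factors on the left through $a_{i-1}$, using $(a_{i-1}b_{i-1})^{j-1}a_{i-1}=a_{i-1}(b_{i-1}a_{i-1})^{j-1}$; hence $e_i\La e_{i-1}\subseteq a_{i-1}(e_{i-1}\La e_{i-1})$, the reverse inclusion being clear.

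The hard part will be the branched region of (\ref{P_i}) around the vertices $1$ and $-1$, where the two arms of the $\type{D}$ graph meet and several arrows carry the scalars $\alpha,\beta,-\alpha$. There one must check that the monomials passing through this region are genuinely nonzero in $\La$, i.e.\ not annihilated by the preprojective (mesh) relations, and that they hit the intended node rather than some scalar combination that collapses under these relations. This is precisely where the parity of path lengths and the exact shape of the diagram must be used, both to keep the two families of (a) and (c) disjoint and to confirm that the second family survives all the way down to the length corresponding to $j=n-i+1$. Once the diagram is read correctly in this region, the node counts and the identification of monomials are routine, and the remaining cases follow by symmetry.
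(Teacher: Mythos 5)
Your proposal is correct and takes essentially the same approach as the paper: the paper's entire justification is the sentence ``The next lemma is a direct consequence of (\ref{P_i})'', i.e., it reads the bases off the explicit diagrams (\ref{P_1}) and (\ref{P_i}) exactly as you describe. Your elaboration (counting nodes labelled by the target vertex, tracing path monomials through the two routes, and flagging the scalar-labelled branch region as the only place requiring care) supplies precisely the bookkeeping the paper leaves implicit.
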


Then we will show the following result, which is analogous to Proposition \ref{fix arrowA}.

\begin{proposition}\label{fix arrow}
Let $\La$ be a preprojective algebra of type $\type{D}_{n+1}$ $(n\geq3)$ and 
$H:=\{g\in\Aut(\Lambda)\ |\ g$ fixes $e_1+e_{-1}$, $a_1+a_{-1}$ and any $e_i,a_i$ for $i\geq 2$  $\}$. Then 
\begin{itemize}
\item[(a)] $\Aut(\La)=\langle\Inn(\Lambda), p, H\rangle$. 
\item[(b)] For any $f\in H$, there exist $k_1\in K^\times$ and $k_j\in K$ $(1<j\leq n-2)$ such that  
$$f(b_{1}+b_{-1})=\sum^{n-2}_{j=1}k_j(b_{1}+b_{-1})(b_2a_2)^{j-1}\ \textnormal{and} \ f(b_{i})=\sum^{n-2}_{j=1}k_j(b_ia_i)^{j-1}b_i\ (2\leq i\leq n-1).$$
\end{itemize}
\end{proposition}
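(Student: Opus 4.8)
The plan is to adapt Iyama's argument for type $\type{A}_n$ (Proposition \ref{fix arrowA}) to the branch of $\type{D}_{n+1}$, treating the fork arrows $a_1,a_{-1}$ and $b_1,b_{-1}$ together. For part (a), I would begin with an arbitrary $\phi\in\Aut(\La)$ and first normalize its action on idempotents: by Lemma \ref{Inn}, after composing with an inner automorphism we may assume $\phi(e_i)=e_{\rho(i)}$ for a permutation $\rho$, and by Lemma \ref{graph auto} this $\rho$ is a graph automorphism of $\type{D}_{n+1}$. The graph automorphism group is $\{\id,\tau\}$ with $\tau=(1\ {-1})$ when $n\geq4$, and $\mathfrak{S}_3$ on the three outer vertices when $\type{D}_4$. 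The key observation is that $\tau$ already lies in $H$: the map exchanging $e_1\leftrightarrow e_{-1}$, $a_1\leftrightarrow a_{-1}$, $b_1\leftrightarrow b_{-1}$ and fixing the remaining generators preserves the preprojective relations and fixes $e_1+e_{-1}$ and $a_1+a_{-1}$. Hence, after composing $\phi$ with a suitable power of $p$ (needed only for $\type{D}_4$, where $p$ supplies the missing $3$-cycle) and with $\tau\in H$, I may assume $\rho=\id$, so that $\phi$ fixes every idempotent.

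With $\phi$ fixing all idempotents, each $\phi(a_i)$ lies in the same space $e_s\La e_t$ as $a_i$, and by Lemma \ref{basis} the arrow $a_i$ spans the unique length-one direction there; since $\phi$ induces an automorphism of $\rad\La/\rad^2\La$ respecting the idempotent grading, and each graded piece is one-dimensional, the coefficient of $a_i$ in $\phi(a_i)$ is nonzero. I would then build, degree by degree, a diagonal unit $u=\sum_i u_i$ with $u_i\in(e_i\La e_i)^\times$ whose conjugation rescales each $\phi(a_i)$ to $a_i$ itself; because $\type{D}_{n+1}$ is a tree there is no cycle obstruction to choosing the $u_i$ consistently. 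After this inner adjustment $\phi$ fixes all $e_i$ and all $a_i$, hence fixes $a_1+a_{-1}$ and lies in $H$, yielding $\Aut(\La)=\langle\Inn(\La),p,H\rangle$.

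For part (b), take $f\in H$; it fixes each $a_i$ $(i\geq2)$ and $a_1+a_{-1}$, fixes each $e_i$ $(i\geq2)$, and at most swaps $1,-1$. I would expand the unknowns $f(b_i)\in e_i\La e_{i+1}$ $(2\leq i\leq n-1)$ and $f(b_1+b_{-1})\in(e_1\oplus e_{-1})\La e_2$ in the bases of Lemma \ref{basis}(c) (and the fork analogue of (b)), writing a priori
$$f(b_i)=\sum_{j=1}^{n-i}k_j^{(i)}(b_ia_i)^{j-1}b_i+\sum_{j=1}^{n-i}\ell_j^{(i)}(b_ia_i)^{j-1}a_{i-1}\cdots a_1b_1\cdots b_i.$$
The strategy is then to apply $f$ to the preprojective relation at each interior vertex, which reads $b_ia_i=a_{i-1}b_{i-1}$ up to sign; since $f$ fixes the $a_i$, this gives $f(b_i)\,a_i=a_{i-1}\,f(b_{i-1})$, while at the fork the identity $(a_1+a_{-1})(b_1+b_{-1})=b_2a_2$ (up to sign) supplies the analogous base relation between $f(b_1+b_{-1})$ and $f(b_2)$. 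Comparing both sides in the chosen bases, right multiplication by $a_i$ carries non-detour terms to non-detour terms and detour terms to detour terms, so matching the detour parts forces $\ell_j^{(i)}=0$ inductively, while matching the principal parts propagates the coefficients unchanged, $k_j^{(i)}=k_j^{(i-1)}=:k_j$. The leaf relations $b_1a_1=b_{-1}a_{-1}=0$ and $a_{n-1}b_{n-1}=0$, together with the nilpotency recorded in Lemma \ref{basis}, cap the number of surviving coefficients at $n-2$, and $k_1\in K^\times$ because $f$ is invertible (equivalently, nonzero on arrows modulo $\rad^2\La$); this is exactly the asserted form.

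The main obstacle is the fork. Unlike type $\type{A}_n$, where each interior relation couples only two arrows, the relation at vertex $2$ couples $a_1,a_{-1},a_2$ and must be handled through the symmetric combinations $a_1+a_{-1}$ and $b_1+b_{-1}$ so as to stay compatible with the possible $1\leftrightarrow-1$ swap; one also has to track carefully the signs in the $\type{D}$ preprojective relations and verify that the detour basis vectors $(b_ia_i)^{j-1}a_{i-1}\cdots b_i$ are genuinely annihilated rather than merely rescaled. The tree shape of $\type{D}_{n+1}$ is what guarantees that these local propagations outward from the fork carry no global monodromy obstruction, so that a single common family $k_1,\dots,k_{n-2}$ governs all of $f$.
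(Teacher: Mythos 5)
Your part (b) is essentially the paper's own argument: start at the leaf, where $a_{n-1}f(b_{n-1})=f(a_{n-1}b_{n-1})=0$ kills the detour coefficient in the expansion of $f(b_{n-1})$ from Lemma \ref{basis}(c), then propagate inward through the interior relations $f(b_i)a_i=a_{i-1}f(b_{i-1})$, and finally use $f(b_2)a_2=(a_1+a_{-1})f(b_1+b_{-1})$ at the fork. Your reduction at the start of part (a) (Lemma \ref{Inn}, Lemma \ref{graph auto}, the observation that the swap $\tau$ of $1,-1$ lies in $H$, and $p$ for $\type{D}_4$) is also fine and matches the paper.

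The gap is in the second half of part (a). You propose to conjugate $\phi$ so that it fixes \emph{every} idempotent and \emph{every} arrow $a_i$, including $a_1$ and $a_{-1}$ separately; the paper only normalizes the sums $e_1+e_{-1}$ and $a_1+a_{-1}$, and this is not a stylistic choice. The inductive construction of the diagonal unit along the chain rests on the identity $e_i\La e_{i-1}=a_{i-1}(e_{i-1}\La e_{i-1})$ of Lemma \ref{basis}(a), which lets you solve $\lambda_{i+1}\phi(a_i)=a_i\lambda_i$ for $\lambda_i$ going inward from $\lambda_n=e_n$. At the fork the analogous identity for a single arrow is \emph{false}: by Lemma \ref{basis}(b), $e_2\La e_1$ has basis $\{(b_2a_2)^{j-1}a_1\}_{j=1}^{n-1}$, and $(b_2a_2)a_1=a_{-1}b_{-1}a_1$ cannot be written as $a_1\cdot(\text{cycle at }1)$, so $a_1(e_1\La e_1)\subsetneq e_2\La e_1$ (only the even powers $(b_2a_2)^{2k}a_1$ lie in it). Hence, with $u_2=\lambda_2$ already pinned down by the chain, the two equations $u_2\phi(a_{\pm 1})=a_{\pm 1}u_{\pm 1}$ are an overdetermined system: they require the components of $u_2\phi(a_{\pm1})$ outside $a_{\pm1}(e_{\pm1}\La e_{\pm1})$ to vanish, and nothing in your argument shows this. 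Your two justifications do not reach this point: one-dimensionality of the pieces of $\rad\La/\rad^2\La$ only controls the leading coefficients, and ``tree $\Rightarrow$ no cycle obstruction'' is a statement about the quiver, whereas the obstruction here lives in higher radical layers of the algebra. (Reversing the order and normalizing the fork first does not help: propagating back up the chain then needs $a_2(e_2\La e_2)\subseteq(e_3\La e_3)a_2$, which again is not automatic.) The paper's way out is precisely the identity that \emph{does} hold, $e_2\La(e_1+e_{-1})=(a_1+a_{-1})\bigl((e_1+e_{-1})\La(e_1+e_{-1})\bigr)$, coming from Lemma \ref{basis}(b) and the relation $b_2a_2=(a_1+a_{-1})(b_1+b_{-1})$; it is solvable exactly because the unit $\lambda_{1,-1}$ is allowed off-diagonal components in $e_1\La e_{-1}$ and $e_{-1}\La e_1$. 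This is the reason $H$ is defined by fixing only the sums: the paper's later computations exhibit elements $f\in H$ with $f(e_1)=e_1+c_1b_1a_{-1}+\cdots$, which do not fix $e_1$ or $a_1$ individually. Your stronger normalization would in effect replace $H$ by a smaller subgroup; whether or not that refinement is true, it is asserted rather than proved, and it is exactly the point where type $\type{D}$ is harder than the type $\type{A}$ case of Proposition \ref{fix arrowA} you are adapting.
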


\begin{proof}
(a) Let $g\in\Aut(\Lambda)$.
In the case of $\type{D}_{n+1}$ for $n\geq 4$, 
by Lemmas \ref{Inn} and \ref{graph auto}, there exists $\lambda\in\La^\times$ such that $\lambda g(-)\lambda^{-1}\in U$, where $U:=\{g\in\Aut(\Lambda)\ |\ g$ fixes $e_1+e_{-1}$ and 
any $e_i$ for $2\leq i\leq n$  $\}$. 
In the case of $\type{D}_{4}$, 
we have $p^m\circ\lambda g(-)\lambda^{-1}\in U$ for some $m\in\{1,2,3\}$. 
Thus 
it is enough to show that $U$ is generated by $\Inn(\Lambda)$ and $H$.

Let $g\in U$. Lemma \ref{basis} (a) implies that  
$e_{i}\Lambda e_{i-1}=a_{i-1}(e_{i-1}\La e_{i-1})$ for any $3\leq i\leq n$. 
Moreover Lemma \ref{basis} (b) and the relation $b_2a_2=(a_1+a_{-1})(b_1+b_{-1})$ imply  
$e_{2}\La (e_{1}+e_{-1})=(a_1+a_{-1})(e_{1}+e_{-1})\Lambda (e_{1}+e_{-1})$.
Then it is easy to check that we can take $\lambda_i\in (e_i\La e_i)^{\times}$ such that $\lambda_n:=e_n$ and 
$\lambda_{i+1}g(a_i)=a_i\lambda_i$ for any $2\leq i\leq n-1$, and 
$\lambda_{1,-1}\in ((e_1+e_{-1})\La (e_1+e_{-1}))^\times$ such that 
$\lambda_{2}g(a_1+a_{-1})=(a_1+a_{-1})\lambda_{1,-1}$. 
Then, for $\lambda:=\lambda_{1,-1}+\lambda_2+\cdots+\lambda_n\in\La^{\times}$, 
we have $\lambda g(e_i)\lambda^{-1}=e_i$, $\lambda g(a_i)\lambda^{-1}=a_i$ for $i\geq 2$ and 
$\lambda  g(e_1+e_{-1})\lambda^{-1}=e_1+e_{-1}$, $\lambda  g(a_1+a_{-1})\lambda^{-1}=a_1+a_{-1}$. 
Thus we get $\lambda g(-)\lambda^{-1}\in H$.

(b) Let $f\in H$. 
Since $f(b_{n-1})=f(e_{n-1}b_{n-1}e_{n})\in e_{n-1}\La e_n$, 
Lemma \ref{basis} (c) implies that we can write $$f(b_{n-1}) =k_1b_{n-1}+k_1'a_{n-1}a_{n-2}\cdots a_1b_1b_2\cdots b_{n-1}$$ for some $k_1,k_1'\in K$. 
Because we have $a_{n-1}f(b_{n-1})=f(a_{n-1})f(b_{n-1})=f(a_{n-1}b_{n-1})=0$, we get $f(b_{n-1}) =k_1b_{n-1}$ and $k_1\in K^\times$. 
Similarly $f(b_{n-1})a_{n-1}=a_{n-2}f(b_{n-2})$ implies that 
$f(b_{n-2}) =k_1b_{n-2}+k_2(b_{n-2}a_{n-2})b_{n-2}$ for some $k_2\in K$.
Inductively $f(b_{i})a_{i}=a_{i-1}f(b_{i-1})$  $(3\leq i\leq n-1)$ implies that 
$f(b_{i})=\sum^{n-i}_{j=1}k_j(b_ia_i)^{j-1}b_i$ for $2\leq i\leq n-1$ for some $k_j\in K$. 
Therefore, by the relations, it can be written as $$f(b_{i})=\sum^{n-2}_{j=1}k_j(b_ia_i)^{j-1}b_i.$$

Moreover by the relation $f(b_2)a_2=(a_1+a_{-1})f(b_1+b_{-1})$, 
we obtain $f(b_{1}+b_{-1})=\sum^{n-2}_{j=1}k_j(b_1+b_{-1})(b_2a_2)^{j-1}.$
\end{proof}

We consider the (completed) preprojective algebra $\wLa$ of type $\tilde{\type{D}}_{n+1}$ given by the following quiver

$$\xymatrix@C50pt@R25pt{&&&-1\ar@<1ex>[d]^{b_{-1}}&\\
n \ar[r]^{a_{n-1}} &n-1\ar@<0.5ex>[d]^{b_{-(n-1)}} \ar[r]^{a_{n-2}}\ar@<1ex>[l]^{b_{n-1}}&\ar@<1ex>[l]^{b_{n-2}}\cdots\ar[r]^{a_{2}}&2 \ar@<1ex>[l]^{b_{2}}\ar[r]^{a_{1}}\ar[u]^{a_{-1}}& \ar@<1ex>[l]^{b_{1}}1.\\
&0\ar@<0.5ex>[u]^{a_{-(n-1)}}&&&} $$

From now on, for simplicity, we write 
\begin{eqnarray*}
(b_1 [0])&:=&e_1,\\
(b_1 [1])&:=&(b_1a_{-1}),\\
(b_1 [2])&:=&(b_1a_{-1})(b_{-1}a_1),\\
(b_1 [3])&:=&(b_1a_{-1})(b_{-1}a_1)(b_1a_{-1}),\\
\vdots
\end{eqnarray*}
and, similarly, define a path $(b_1 [j])$ of $e_1\La(e_1+e_{-1})$, and  of $e_1\wLa(e_1+e_{-1})$ for $j\geq0$. 
Note that we have either $(b_1 [j])b_1=0$ or $(b_1[j])b_{-1}=0$. 
Moreover, we define $(b_1 [j])b_{\overline{1}}$ by 
$b_{\overline{1}}:=
\begin{cases}
b_1 &\textnormal{if}\ j: \textnormal{even},\\
b_{-1} &\textnormal{if}\ j: \textnormal{odd}.
\end{cases}
$
For example, we have 
$(b_1[3])b_{\overline{1}}=(b_1a_{-1})(b_{-1}a_1)(b_1a_{-1})b_{-1}$. 
Similarly we define $(b_{-1} [j])$ and $(b_{-1} [j])b_{\overline{-1}}:=
\begin{cases}
b_{-1} &\textnormal{if}\ j: \textnormal{even},\\
b_{1} &\textnormal{if}\ j: \textnormal{odd}.
\end{cases}
$

Then using the above terminology, we define a path
$$\tilde{f}_{\infty}(b_1,b_{-1})=\sum^{\infty}_{j=1}c_j(b_{1}[j])+\sum^{\infty}_{j=1}d_j(b_{-1}[j])\in (e_1+e_{-1})\wLa(e_1+e_{-1})$$
for some $c_j,d_j\in K$. 
Similarly we write 
\begin{eqnarray*}
(a_{n-1} [0])&:=&e_n,\\
(a_{n-1} [1])&:=&(a_{n-1}b_{-(n-1)}),\\
(a_{n-1} [2])&:=&(a_{n-1}b_{-(n-1)})(a_{-(n-1)}b_{n-1}),\\
(a_{n-1} [3])&:=&(a_{n-1}b_{-(n-1)})(a_{-(n-1)}b_{n-1})(a_{n-1}b_{-(n-1)}),\\
\vdots
\end{eqnarray*}
and define a path $(a_{n-1} [j])\in e_n\wLa(e_0+e_n)$ for $j\geq0$, and similarly we define a path $(a_{-(n-1)} [j])\in e_0\wLa(e_0+e_n)$.%

Then, we define 
$$\tilde{f}_{\infty}(a_{n-1},a_{-(n-1)}):=\sum^{\infty}_{j=1}c_j(a_{n-1}[j])+\sum^{\infty}_{j=1}d_j(a_{-(n-1)}[j])\in(e_0+e_n)\wLa(e_0+e_n)$$

We will use these terminologies in the next subsections.


\subsection{The case of $\type{D}_{4}$.}
Before dealing with the general case, 
we deal with the case of $\type{D}_{4}$  (i.e. $n=3$) in this subsection, which helps us to know the strategy for general settings. 
By  Proposition \ref{fix arrow}, it is enough to show it for $f\in H$. 

Fix $f\in H$. 
By (\ref{P_1}), 
we have $(b_{\pm 1} [j])=0$ for $j\geq 3$.
Then, because $f(e_1)\in (e_1+e_{-1})\La(e_1+e_{-1})$, 
we can write $f(e_1)=
\sum^{2}_{j=0}c_j(b_i [j])+\sum^{2}_{j=0}d_j(b_{-1}[j])$
for some $c_j,d_j\in K$. Since $f(e_1)$ is an idempotent, we get 
$c_0^2=c_0$ and $d_0^2=d_0$. Without loss of generality, we can choose $c_0=1$ and $d_0=0$ and moreover we have
$$-c_2=c_1d_{1}=d_{2}.$$

Because $f(e_1+e_{-1})=e_1+e_{-1},$ we obtain

$$
f(e_{-1})=e_{-1}-\sum^{2}_{j=1}c_j(b_{1}[j])-\sum^{2}_{j=1}d_j(b_{-1}[j]).
$$

On the other hand, 
by $f(a_1)=f((a_1+a_{-1})e_1)=(a_1+a_{-1})f(e_1)$ and $f(a_{-1})=(a_1+a_{-1})f(e_{-1})$,
we have
$$
f(a_1)=a_1+c_1a_1(b_{1}[1])+d_1a_{-1}(b_{-1}[1]),\ 
 f(a_{-1})=a_{-1}-c_1a_1(b_{1}[1])-d_1a_{-1}(b_{-1}[1]).$$

Furthermore, since $f(b_1)=f(e_1)f(b_1+b_{-1})=f(e_1)k_1(b_1+b_{-1})$ and $f(b_{-1})=f(e_{-1})f(b_1+b_{-1})=f(e_{-1})k_1(b_1+b_{-1})$, we have 
$$f(b_1)=k_1\{ b_1+c_1(b_1[1])b_{-1}+d_1(b_{-1}[1])b_1\}, f(b_{-1})=k_1\{ b_{-1}-c_1(b_1[1])b_{-1}-d_1(b_{-1}[1])b_1\}.$$

Then, by the relations $f(b_1)f(a_1)=0$ and $f(b_{-1})f(a_{-1})=0$, we obtain   
\begin{eqnarray}\label{D_4}
c_1+d_1=0.
\end{eqnarray}

Now we consider the preprojective algebra $\wLa$ of type $\tilde{\type{D}}_{4}$ given by the following quiver

$$\xymatrix@C50pt@R20pt{&&-1\ar@<0.5ex>[d]^{b_{-1}}&&\\
&3\ar[r]^{a_{2}}&2\ar@<0.5ex>[d]^{b_{-2}} \ar@<1ex>[l]^{b_{2}}\ar[r]^{a_{1}}\ar@<0.5ex>[u]^{a_{-1}}& \ar@<1ex>[l]^{b_{1}}1.\\
&&0\ar@<0.5ex>[u]^{a_{-2}}&} $$

Then, using the above $c_1,c_2,d_1,d_2$, 
we give the following correspondence $\tilde{f}$, and we will show that $\tilde{f}$ gives an automorphism of $\wLa$.

$\bf{(i)}$ First we define
\begin{eqnarray*}
\tilde{f}(e_1):=e_1+\tilde{f}_{\infty}(b_1,b_{-1}),\ \ \ 
\tilde{f}(e_{-1}):=e_1-\tilde{f}_{\infty}(b_1,b_{-1}), 
\end{eqnarray*}
where we define $c_j$ and $d_j$ ($j\geq 3$) as follows.
For odd $j$, we let $c_{j}=0=d_j$ for any $j$. 
For even $j$, we define 
$$-c_j=c_1d_{j-1}+c_2c_{j-2}+c_3d_{j-3}+c_4c_{j-4}+\cdots+c_{j-2}c_{2}+c_{j-1}d_{1}=d_{j}.$$

Note that $\tilde{f}(e_1)^2=\tilde{f}(e_1)$, $\tilde{f}(e_{-1})^2=\tilde{f}(e_{-1})$ and $\tilde{f}(e_1)\tilde{f}(e_{-1})=0=\tilde{f}(e_{-1})\tilde{f}(e_1)$.

$\bf{(ii)}$ Secondly, we define  
$\tilde{f}(a_1):=(a_1+a_{-1})\tilde{f}(e_1)$ and $\tilde{f}(a_{-1}):=(a_1+a_{-1})\tilde{f}(e_{-1})$, that is, 
\begin{eqnarray*}
\tilde{f}(a_1)&:=&a_1+\sum^{\infty}_{j=1}c_ja_1(b_{1}[j])+\sum^{\infty}_{j=1}d_ja_{-1}(b_{-1}[j]),\\
\tilde{f}(a_{-1})&:=&a_{-1}-\sum^{\infty}_{j=1}c_ja_1(b_{1}[j]) - \sum^{\infty}_{j=1}d_ja_{-1}(b_{-1}[j]). 
\end{eqnarray*}

$\bf{(iii)}$ Thirdly, 
we define $\tilde{f}(b_1):=\tilde{f}(e_1)k_1(b_1+b_{-1})$ and $\tilde{f}(b_{-1}):=\tilde{f}(e_{-1})k_1(b_1+b_{-1})$,  
that is, 
\begin{eqnarray*}
\tilde{f}(b_1)&:=&k_1\{b_1+\sum^{\infty}_{j=1}c_j(b_{1}[j])b_{\overline{1}}+\sum^{\infty}_{j=1}d_j(b_{-1}[j])b_{\overline{-1}}\},\\
\tilde{f}(b_{-1})&:=&k_1\{b_{-1}-\sum^{\infty}_{j=1}c_j(b_{1}[j])b_{\overline{1}}-\sum^{\infty}_{j=1}d_j(b_{-1}[j])b_{\overline{-1}}\}. 
\end{eqnarray*}

\begin{remark}Because $c_1+d_1=0$ and $-c_2=c_1d_1=d_2$, we can explicitly describe $\tilde{f}(e_1)$ as follows.
\begin{eqnarray*}
\tilde{f}(e_1)&=&e_1+c_1(b_1[1])+c_2(b_1[2])+c_4(b_1[4])+\cdots +c_{2j}(b_1[2j])+\cdots \\
&+&\ \ d_{1}(b_{-1}[1])+d_{2}(b_{-1}[2])+d_{4}(b_{-1}[4])+\cdots+d_{2j}(b_{-1}[2j])+\cdots
\end{eqnarray*}
where $c_{2j}=-d_{2j}=(-1)^{j+1}C_{j-1}c_1^{2j}$ ($j\geq 1$) and 
$C_{j-1}:= \frac{(2j-2)!}{j!(j-1)!}$ is the Catalan number. 
Then we can get $\tilde{f}(e_1)^2=\tilde{f}(e_1)$ by using the property  
$C_{j+1}=\sum_{i=0}^{j}C_iC_{j-i}$.
\end{remark}

Furthermore, we can check the following lemma.

\begin{lemma}\label{zero relation D_4}
We have $\tilde{f}(b_{\pm 1})\tilde{f}(a_{\pm 1})=0.$
\end{lemma}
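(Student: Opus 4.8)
The plan is to reduce both identities $\tilde{f}(b_1)\tilde{f}(a_1)=0$ and $\tilde{f}(b_{-1})\tilde{f}(a_{-1})=0$ to the orthogonality of the idempotents $\tilde{f}(e_1),\tilde{f}(e_{-1})$ together with a single scalar identity. First I would feed the defining formulas for $\tilde{f}$ in steps (ii) and (iii), namely $\tilde{f}(a_{\pm1})=(a_1+a_{-1})\tilde{f}(e_{\pm1})$ and $\tilde{f}(b_{\pm1})=\tilde{f}(e_{\pm1})k_1(b_1+b_{-1})$, into the product to obtain, for $\epsilon\in\{1,-1\}$,
\[
\tilde{f}(b_\epsilon)\tilde{f}(a_\epsilon)=k_1\,\tilde{f}(e_\epsilon)\,(b_1+b_{-1})(a_1+a_{-1})\,\tilde{f}(e_\epsilon).
\]
Since $b_1a_1=0$ and $b_{-1}a_{-1}=0$ are the leaf relations of $\wLa$ at the vertices $1$ and $-1$, the middle factor collapses to $M:=(b_1+b_{-1})(a_1+a_{-1})=(b_1[1])+(b_{-1}[1])$. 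Thus everything reduces to proving $\tilde{f}(e_1)\,M\,\tilde{f}(e_1)=0$ and $\tilde{f}(e_{-1})\,M\,\tilde{f}(e_{-1})=0$.

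Next I would record how $M$ acts by multiplication on the monomials $(b_1[j])$ and $(b_{-1}[j])$. Writing $x_j:=(b_1[j])$ and $y_j:=(b_{-1}[j])$ (so $x_0=e_1$, $y_0=e_{-1}$), these are genuine paths in $\wLa$, so their products are computed by concatenation and vanish precisely when the intermediate idempotents fail to match. A short check of the alternating pattern gives the rules $Mx_j=y_{j+1}$, $My_j=x_{j+1}$ for left multiplication and $x_jM=x_{j+1}$, $y_jM=y_{j+1}$ for right multiplication. Setting $u:=\tilde{f}(e_1)=\sum_{j\ge0}c_jx_j+\sum_{j\ge0}d_jy_j$ (where $c_0=1$, $d_0=0$) and $v:=\tilde{f}(e_{-1})=(e_1+e_{-1})-u$, these rules express $Mu$, $vM$, $Mv$ and $uM$ as explicit series in the $x_j,y_j$.

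The heart of the argument is the identity $c_j=-d_j$ for every $j\ge1$: for $j=1$ it is exactly relation (\ref{D_4}), for $j=2$ it is $-c_2=d_2$, for odd $j\ge3$ both coefficients vanish, and for even $j\ge3$ it is built into the definition of the coefficients. Comparing the series term by term, this identity yields the intertwining relations $Mu=vM$ and $Mv=uM$. Granting them, the two desired vanishings follow at once from $uv=vu=0$:
\[
\tilde{f}(e_1)M\tilde{f}(e_1)=u(Mu)=u(vM)=(uv)M=0,\qquad \tilde{f}(e_{-1})M\tilde{f}(e_{-1})=v(Mv)=v(uM)=(vu)M=0.
\]
Hence $\tilde{f}(b_{\pm1})\tilde{f}(a_{\pm1})=0$.

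I expect the main obstacle to be organisational rather than conceptual. One has to set up the parity-dependent concatenation rules for the paths $(b_{\pm1}[j])$ correctly, keeping track of which endpoint each path lands on so that the vanishing products are identified accurately, and then verify that $c_j=-d_j$ holds uniformly across all $j\ge1$ by splicing together the two initial cases, the recursion for even indices, and the vanishing for odd indices. Once the intertwining $Mu=vM$ is in hand the remaining algebra is immediate, so the entire weight of the lemma sits in the bookkeeping of the series defining $\tilde{f}(e_1)$.
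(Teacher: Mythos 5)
Your proposal is correct, and it genuinely reorganizes the argument relative to the paper. The paper proves the lemma by brute force: it expands $\tilde{f}(b_1)\tilde{f}(a_1)$ and checks, for each even $m$, that the coefficients of $(b_{\pm 1}[m])$ and $(b_{\pm 1}[m+1])$ are convolution sums that vanish because $c_j=0=d_j$ for odd $j\geq 3$ and $c_1+d_1=0$. You instead factor the product as $k_1\,\tilde{f}(e_{\pm 1})\,M\,\tilde{f}(e_{\pm 1})$ with $M=(b_1+b_{-1})(a_1+a_{-1})=(b_1[1])+(b_{-1}[1])$, record the concatenation rules $Mx_j=y_{j+1}$, $My_j=x_{j+1}$, $x_jM=x_{j+1}$, $y_jM=y_{j+1}$ (which I verified; the parity bookkeeping is right, since exactly one of the two cross terms survives at each step), and reduce everything to the uniform identity $c_j=-d_j$ for $j\geq 1$ — which does hold: $j=1$ is (\ref{D_4}), $j=2$ is $-c_2=c_1d_1=d_2$, odd $j\geq 3$ vanish, and even $j\geq 4$ is the defining recursion in {\bf (i)}. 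This yields the intertwining relations $Mu=vM$ and $Mv=uM$, and the lemma drops out of the orthogonality $\tilde{f}(e_1)\tilde{f}(e_{-1})=0=\tilde{f}(e_{-1})\tilde{f}(e_1)$. What each approach buys: yours is shorter, avoids the case-by-case convolution computations entirely, and exposes the mechanism (conjugation by $M$ swaps the two idempotents, so sandwiching $M$ between two copies of the same idempotent kills it); on the other hand, it transfers weight onto the idempotency/orthogonality of $\tilde{f}(e_{\pm 1})$, which the paper only asserts in the note following {\bf (i)} and justifies via the Catalan-number remark, whereas the paper's direct computation of the lemma is independent of that fact. Since that note is part of the established setup (and is needed anyway for $\tilde{f}$ to be an algebra map), your use of it is legitimate. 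One incidental point in your favor: you read $\tilde{f}(e_{-1})$ as $(e_1+e_{-1})-\tilde{f}(e_1)$, which silently corrects the paper's typo $\tilde{f}(e_{-1}):=e_1-\tilde{f}_{\infty}(b_1,b_{-1})$ (it should begin with $e_{-1}$), and is the only reading consistent with $\tilde{f}(e_1)+\tilde{f}(e_{-1})=e_1+e_{-1}$.
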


\begin{proof} 
We only show it for $\tilde{f}(b_1)\tilde{f}(a_1)$ and the case of $\tilde{f}(b_{-1})\tilde{f}(a_{-1})$ is similar. 
For simplicity, we denote by $\Co(b_{\pm 1} [j])$ the coefficient of $(b_{\pm 1} [j])$ in $\tilde{f}(b_1)\tilde{f}(a_1)$.  

Fix even $m$ with $m\geq 2$. By the direct calculation, we can check

\begin{eqnarray*}
\Co(b_1[m])&=&k_1\{(c_{m-1}+d_{m-1})+c_2(c_{m-3}+d_{m-3})+c_4(c_{m-5}+d_{m-5})+\cdots+c_{m-2}(c_{1}+d_{1})   \} \nonumber \\
\Co(b_1 [m+1])&=&k_1\{c_1(c_{m-1}+d_{m-1})+c_3(c_{m-3}+d_{m-3})+c_5(c_{m-5}+d_{m-5})+\cdots+c_{m-1}(c_{1}+d_{1})   \}\\
\Co(b_{-1}[m])&=&k_1\{d_2(c_{m-3}+d_{m-3})+d_4(c_{m-5}+d_{m-5})+d_6(c_{m-7}+d_{m-7})+\cdots+d_{m-2}(c_{1}+d_{1})   \} \nonumber \\
\Co(b_{-1} [m+1])&=&k_1\{d_1(c_{m-1}+d_{m-1})+d_3(c_{m-3}+d_{m-3})+d_5(c_{m-5}+d_{m-5})+\cdots+d_{m-1}(c_{1}+d_{1})   \}.\\
\end{eqnarray*}
Then because $c_j=0=d_j$ for odd $j\geq3$ and $c_1+d_1=0$
 by (\ref{D_4}), they are zero.
\end{proof}

Then, we obtain the following desired result. 

\begin{proposition}\label{prop D4}
For $f\in H$, we have $\tilde{f}\in\Aut(\wLa)$ such that
$$\xymatrix@C45pt@R15pt{
\wLa\ar[d]_{\rm nat.} \ar[r]^{\tilde{f}}   &\ar[d]^{\rm nat.}\wLa  \\
\La \ar[r]^{f}  &\La}$$
by defining $\tilde{f}$ as follows :

$\bullet$ $\tilde{f}(e_{\pm1}),\tilde{f}(a_{\pm1})$ and $\tilde{f}(b_{\pm1})$ as {\bf (i),(ii),(iii)} and $\tilde{f}(e_2):=e_2$. 
\begin{eqnarray*}
\bullet\ \tilde{f}(e_3):=e_3+\tilde{f}_{\infty}(a_2,a_{-2}),\ \ \ 
\tilde{f}(e_{0}):=e_{0}-\tilde{f}_{\infty}(a_2,a_{-2}),\\
\tilde{f}(b_2):=k_1(b_2+b_{-2})\tilde{f}(e_3),\ \ \ 
\tilde{f}(b_{-2}):=k_1(b_2+b_{-2})\tilde{f}(e_{0}),\\
\tilde{f}(a_2):=\tilde{f}(e_3)(a_2+a_{-2}),\ \ \ 
\tilde{f}(a_{-2}):=\tilde{f}(e_{0})(a_2+a_{-2}).
\end{eqnarray*}
\end{proposition}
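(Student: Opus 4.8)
The plan is to verify that the explicitly defined map $\tilde{f}$ is an algebra automorphism of $\wLa$ lifting $f$, so that the square commutes. Since the cases of inner automorphisms and of $p$ are immediate (an inner automorphism lifts by lifting a unit of $\La$ to a unit of $\wLa$, while $p$ is induced by a diagram automorphism of $\tilde{\type{D}}_4$ fixing the added vertex $0$), Proposition \ref{fix arrow} reduces the statement of Proposition \ref{restriction} in this case to $f\in H$, which is exactly the situation here. I would organize the verification into three steps: (1) $\tilde{f}$ respects the defining relations of $\wLa$, hence is a well-defined algebra endomorphism; (2) $\tilde{f}$ is bijective; (3) the square commutes. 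Throughout, the infinite series defining $\tilde{f}$ make sense because $\wLa$ is $\mathfrak{m}$-adically complete.

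For Step (1), I would first check the idempotent relations: the elements $\tilde{f}(e_2)=e_2$, $\tilde{f}(e_{\pm 1})$, $\tilde{f}(e_0)$, $\tilde{f}(e_3)$ must form a complete set of orthogonal idempotents. Orthogonality with $e_2$ and completeness are immediate from the formulas, while $\tilde{f}(e_{\pm 1})^2=\tilde{f}(e_{\pm 1})$ and $\tilde{f}(e_1)\tilde{f}(e_{-1})=0$ follow from the recursive, Catalan-number definition of the coefficients $c_j,d_j$ for $j\geq 3$, as recorded in the Remark via the identity $C_{j+1}=\sum_i C_iC_{j-i}$; the pair $\tilde{f}(e_0),\tilde{f}(e_3)$ is handled by the identical series in $a_{\pm 2}$. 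The source--target (path) relations hold by construction, since each arrow is sent into the correct corner, e.g. $\tilde{f}(a_1)=(a_1+a_{-1})\tilde{f}(e_1)$ lands in $e_2\wLa\tilde{f}(e_1)$. The essential point is the preprojective relations: the vanishing relations at the leaf vertices $\pm 1$, namely $\tilde{f}(b_{\pm 1})\tilde{f}(a_{\pm 1})=0$, constitute Lemma \ref{zero relation D_4}, the analogous vanishing on the $a_{\pm 2},b_{\pm 2}$ side follows by the same computation, and the remaining relation at the central vertex $2$ must also be checked. This last relation is where the mirrored definition of $\tilde{f}$ on $e_0,e_3,a_{\pm 2},b_{\pm 2}$ is used, the two sides being designed to match.

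For Step (2), I would deduce that $\tilde{f}$ is an automorphism from completeness of $\wLa$. The map fixes $e_2$, carries the standard idempotents to a complete orthogonal system, and on arrows has leading term the corresponding arrow (the $b$'s scaled by $k_1\in K^\times$, the $a$'s fixed), so it induces an invertible map on $\mathfrak{m}/\mathfrak{m}^2$; a continuous endomorphism of the $\mathfrak{m}$-adically complete algebra $\wLa$ with this property is invertible. For Step (3), the natural surjection $\wLa\to\La$ annihilates $e_0$, hence also $a_{-2},b_{-2}$ and every path through $0$, and by (\ref{P_1}) it sends $(b_{\pm 1}[j])\mapsto 0$ for $j\geq 3$ in type $\type{D}_4$; thus the infinite series defining $\tilde{f}(e_{\pm 1}),\tilde{f}(a_{\pm 1}),\tilde{f}(b_{\pm 1})$ truncate to precisely the finite expressions computed for $f$ at the start of this subsection. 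Since $\tilde{f}$ sends generators to the correct images, the diagram commutes.

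The main obstacle is Step (1), and within it the preprojective relations for the infinite series. Verifying that the coefficients assembled from the recursive definition telescope to zero relies on the Catalan identity, and reconciling the $\pm 1$ side with the $\pm 2$ side at the central vertex $2$ is the delicate part, precisely because the series there are genuinely infinite. Once the coefficient recursion is in hand, everything else is bookkeeping.
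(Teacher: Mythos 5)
Your proposal is correct and follows essentially the same route as the paper's proof: check the preprojective relations of $\wLa$ — the leaf relations $\tilde{f}(b_{\pm 1})\tilde{f}(a_{\pm 1})=0$ via Lemma \ref{zero relation D_4}, the relations $\tilde{f}(a_{\pm 2})\tilde{f}(b_{\pm 2})=0$ by the coincidence of the coefficients of $\tilde{f}_{\infty}(a_2,a_{-2})$ with those of $\tilde{f}_{\infty}(b_1,b_{-1})$, and the relation at the central vertex $2$ — and then conclude that $\tilde{f}$ is an automorphism lifting $f$; your additions (the explicit bijectivity argument via completeness and the induced map on $\mathfrak{m}/\mathfrak{m}^2$, and the explicit check that the square commutes because the surjection $\wLa\to\La$ kills paths through $0$ and all $(b_{\pm 1}[j])$ with $j\geq 3$) are refinements of points the paper treats as immediate. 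One small correction of emphasis: the central relation is not where the difficulty lies — since $\tilde{f}(e_1)+\tilde{f}(e_{-1})=e_1+e_{-1}$ and $\tilde{f}(e_3)+\tilde{f}(e_0)=e_3+e_0$, both sides collapse to $k_1(b_2+b_{-2})(a_2+a_{-2})=k_1(a_1+a_{-1})(b_1+b_{-1})$ with no infinite series surviving, so the genuinely delicate point is only Lemma \ref{zero relation D_4} itself.
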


\begin{proof}
We will check the following relations

\[\left\{\begin{array}{lll}
\widetilde{f}(b_{\pm 1})\widetilde{f}(a_{\pm 1})=0 &&\textnormal{(a)}\\
 \widetilde{f}(a_{\pm 2})\widetilde{f}(b_{\pm 2})=0
&&\textnormal{(b)}\\
\widetilde{f}(b_{2}+b_{-2})\widetilde{f}(a_{2}+a_{-2})=\widetilde{f}(a_1+a_{-1})\widetilde{f}(b_1+b_{-1}) &&\textnormal{(c)}
 \end{array}\right.\]

(a) This follows from Lemma \ref{zero relation D_4}.

(b) Because the coefficients of $\tilde{f}_{\infty}(a_2,a_{-2})$ is the same as the ones of $\tilde{f}_{\infty}(b_1,b_{-1})$, this follows from (a) by the same calculation.

(c) Because $\widetilde{f}(e_3+e_0)=e_3+e_0$, we get 
$\widetilde{f}(b_{2}+b_{-2})\widetilde{f}(a_{2}+a_{-2})=
k_1(b_{2}+b_{-2})(a_{2}+a_{-2})=
(a_1+a_{-1})k_1(b_1+b_{-1})=
\widetilde{f}(a_1+a_{-1})\widetilde{f}(b_1+b_{-1}).$

Thus we can obtain $\widetilde{f} \in\Aut(\wLa)$. 
The second statement is clear from the definition of $\widetilde{f}$.
\end{proof}

\subsection{The case of $\type{D}_{5}$.}

Next we will deal with the case of $\type{D}_{5}$ (i.e. the case of $n=4$).

Then as the case $\type{D}_{4}$, we can write 
$
f(e_{1})=e_{1}+\sum^{3}_{j=1}c_j(b_{1}[j])+\sum^{3}_{j=1}d_j(b_{-1}[j])$
and $f(e_{-1})=e_{-1}-\sum^{3}_{j=1}c_j(b_{1}[j])-\sum^{3}_{j=1}d_j(b_{-1}[j])
$
for some $c_j,d_j\in K$ such that 
$$ -c_2=c_1d_{1}=d_{2}.$$

Then, similarly, we have
$$
f(a_1)=a_1+\sum^{2}_{j=1}c_ja_1(b_{1}[j])+\sum^{2}_{j=1}d_ja_{-1}(b_{-1}[j]),\ 
 f(a_{-1})=a_{-1}-\sum^{2}_{j=1}c_ja_1(b_{1}[j])-\sum^{2}_{j=1}d_ja_{-1}(b_{-1}[j]).$$
Since 
$f(b_1+b_{-1})=\sum^{2}_{j=1}k_j(b_1+b_{-1})(b_2a_2)^{j-1}
=\sum^{2}_{j=1}k_j\{( b_1[j-1])b_{\overline{1}}+(b_{-1}[j-1])b_{\overline{-1}})\}$, 
we have  
\begin{eqnarray*}
f(b_1)&=&k_1\{b_1+c_1(b_1[1])b_{-1}+c_2(b_1[2])b_1+ 
d_{1}(b_{-1}[1])b_{1}+d_{2}(b_{-1}[2])b_{-1}\}\\
&+&k_2\{(b_1[1])b_{-1}+c_1(b_1[2])b_1+ 
d_{1}(b_{-1}[2])b_{-1}\},\\
f(b_{-1})&=&k_1\{b_{-1}-c_1(b_1[1])b_{-1}-c_2(b_1[2])b_1- 
d_{1}(b_{-1}[1])b_{1}-d_{2}(b_{-1}[2])b_{-1}\}\\
&+&k_2\{(b_{-1}[1])b_{1}-c_1(b_1[2])b_1- 
d_{1}(b_{-1}[2])b_{-1}\}.
\end{eqnarray*}

Then, by the relations $f(b_1)f(a_1)=0$ and $f(b_{-1})f(a_{-1})=0$, we have

\begin{eqnarray}\label{D_5}
c_1+d_1=0\ \textnormal{and}\ k_2=0.
\end{eqnarray}

Next, we consider the factorization of $f$ using $u,v\in\Aut(\La)$, which are defined as follows.  
First, $u$ is defined by $u(e_1):=e_{1}+\sum^{2}_{j=1}c_j(b_{1}[j])+\sum^{2}_{j=1}d_j(b_{-1}[j])$, $u(e_{-1}):=e_{-1}-\sum^{2}_{j=1}c_j(b_{1}[j])-\sum^{2}_{j=1}d_j(b_{-1}[j]),$ 
$u(a_{\pm 1}):=f(a_{\pm 1})$, $u(b_{\pm 1}):=f(b_{\pm 1})$ and
$u(e_i):=f(e_i)$, $u(a_i):=f(a_i)$, $u(b_i):=f(b_i)$ for any $i\geq 2$ (i.e. $u=f$ if $c_3=0=d_3$). 

Second, $v$ is defined by $v(e_1)=e_1+c_3(b_{1}[3])+d_3(b_{-1}[3])$, $v(e_{-1})=e_{-1}-c_3(b_{1}[3])-d_3(b_{-1}[3])$, $v(a_{\pm 1}):=a_{\pm 1}$, $v(b_{\pm 1}):=b_{\pm 1}$
and $v(e_i)=e_i$, $v(a_i)=a_i$, $v(b_i)=b_i$ for any $i\geq 2$. 

Then we have $f=v\circ u$. 

Now we consider the preprojective algebra $\wLa$ of type $\tilde{\type{D}}_{5}$ given by the following quiver 

$$\xymatrix@C50pt@R20pt{&&&-1\ar@<0.5ex>[d]^{b_{-1}}&\\
&4 \ar[r]^{a_{3}}&\ar@<1ex>[l]^{b_{3}}3\ar@<0.5ex>[d]^{b_{-3}}\ar[r]^{a_{2}}&2 \ar@<1ex>[l]^{b_{2}}\ar[r]^{a_{1}}\ar@<0.5ex>[u]^{a_{-1}}& \ar@<1ex>[l]^{b_{1}}1.\\
&&0\ar@<0.5ex>[u]^{a_{-3}}&&} $$ 

Then we will give a lifting of $u,v\in\Aut(\La)$ to $\Aut(\wLa)$. First we consider $v$ and show the following lemma.

\begin{lemma}\label{lift v}
For $v$, we have $\tilde{v}\in\Aut(\wLa)$ such that
$$\xymatrix@C45pt@R15pt{
\wLa\ar[d]_{\rm nat.} \ar[r]^{\tilde{v}}   &\ar[d]^{\rm nat.}\wLa  \\
\La \ar[r]^{v}  &\La.}$$
\end{lemma}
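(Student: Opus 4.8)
The plan is to follow the recipe already used for $\type{D}_4$ in Proposition \ref{prop D4}: I will write $\widetilde v$ down by hand as an $\mathfrak{m}$-adically convergent deformation of the identity of $\wLa$, deforming only the idempotents at the two branch ends $\{1,-1\}$ and $\{4,0\}$ of $\tilde{\type{D}}_5$ together with the arrows incident to them, while keeping the central edge $2\,\text{--}\,3$ and the idempotents $e_2,e_3$ fixed. Since $v$ itself only alters $e_{\pm1}$ through the socle elements $c_3(b_1[3])+d_3(b_{-1}[3])$ and fixes everything of index $\geq 2$, the guess is that $\widetilde v$ should reduce to $v$ once every deforming term that involves the affine vertex $0$, or that is long enough to vanish in $\La$, has been killed.

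Concretely, I would first set $\widetilde v(e_1)=e_1+\widetilde v_\infty(b_1,b_{-1})$ and $\widetilde v(e_{-1})=e_{-1}-\widetilde v_\infty(b_1,b_{-1})$, where $\widetilde v_\infty(b_1,b_{-1})=\sum_{j\ge 3}\gamma_j(b_1[j])+\sum_{j\ge3}\delta_j(b_{-1}[j])$ has leading coefficients $\gamma_3=c_3$, $\delta_3=d_3$ (and $\gamma_1=\gamma_2=\delta_1=\delta_2=0$), the remaining $\gamma_j,\delta_j$ being fixed by the same Catalan-type recursion as in the remark following Lemma \ref{zero relation D_4}, so that $\widetilde v(e_1),\widetilde v(e_{-1})$ become orthogonal idempotents of $\wLa$. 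Mirroring this at the other branch, I would deform $e_4,e_0$ by $\pm\widetilde v_\infty(a_3,a_{-3})$ with the \emph{same} coefficients, and define the incident arrows through the idempotents by $\widetilde v(a_{\pm1})=(a_1+a_{-1})\widetilde v(e_{\pm1})$, $\widetilde v(b_{\pm1})=\widetilde v(e_{\pm1})(b_1+b_{-1})$ and analogously for $a_{\pm3},b_{\pm3}$, leaving $a_2,b_2$ fixed. The commutative square is then almost automatic: every deforming term other than $c_3(b_1[3])$ and $d_3(b_{-1}[3])$ — namely the higher zigzags $(b_1[j]),(b_{-1}[j])$ with $j\ge4$ and the entire second-branch series $\widetilde v_\infty(a_3,a_{-3})$ — lies in $\ker(\wLa\to\La)=\langle e_0\rangle$, so $\widetilde v$ reduces to $v$.

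The real content is to check that $\widetilde v$ is a well-defined algebra endomorphism, i.e. that it preserves the mesh relations of $\tilde{\type{D}}_5$. The relations at the two branch vertices $2$ and $3$ should collapse for free, because $\widetilde v(e_1)+\widetilde v(e_{-1})=e_1+e_{-1}$ and $\widetilde v(e_4)+\widetilde v(e_0)=e_4+e_0$ force the deformed incident products to equal the undeformed ones, exactly as relation (c) did in Proposition \ref{prop D4}; and the leaf relations at $4,0$ mirror those at $\pm1$ since the coefficients agree. Thus everything reduces to verifying the leaf relations $\widetilde v(b_{\pm1})\widetilde v(a_{\pm1})=0$.

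This last verification is where I expect the main obstacle. Expanding $\widetilde v(b_1)\widetilde v(a_1)=\widetilde v(e_1)\big[(b_1[1])+(b_{-1}[1])\big]\widetilde v(e_1)$ and collecting coefficients should, as in Lemma \ref{zero relation D_4}, reduce the identity to a family of relations among the $\gamma_j,\delta_j$ that the idempotency recursion is designed to satisfy. The delicate feature, absent in the $\type{D}_4$ case, is that the two branches are now separated by the edge $2\,\text{--}\,3$: the obstructing zigzag cycles, such as $(b_1[4])=b_1(b_2a_2)^3a_1$, vanish in $\La$ but are nonzero in $\wLa$, where they necessarily travel through vertex $3$ and the affine vertex $0$ before closing up. Making these cancel will force the recursion to be compatible across the connecting edge, and this bookkeeping — rather than any conceptual difficulty — is what I expect to be the hardest part. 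Finally, $\widetilde v$ is an automorphism because it is the identity modulo $\mathfrak{m}$, hence induces the identity on the associated graded of $\wLa$ for the $\mathfrak{m}$-adic filtration and is therefore invertible in the complete algebra.
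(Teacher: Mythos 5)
There is a genuine gap, and it sits exactly at the step you defer as ``bookkeeping'': your direct ansatz cannot satisfy the leaf relation $\widetilde v(b_1)\widetilde v(a_1)=0$ unless $c_3+d_3=0$, and nothing forces this. Note first that the vanishing of the coefficients in Lemma \ref{zero relation D_4} is \emph{not} a consequence of the idempotency recursion: the recursion only determines the even-index coefficients, while the coefficients $\Co(b_{\pm1}[m])$ computed there vanish because $c_j+d_j=0$ for every odd $j$, which is precisely relation (\ref{D_4}), i.e.\ a consequence of $f$ being an automorphism of $\La$. For $v$ no such constraint exists: the elements $(b_{\pm1}[3])$ lie in the socle of $\La$ (in type $\type{D}_5$ one has $(b_{\pm1}[j])=0$ for $j\geq4$ and these paths are killed by every arrow), so $v$ is an automorphism of $\La$ for an \emph{arbitrary} pair $(c_3,d_3)$; the relations $f(b_{\pm1})f(a_{\pm1})=0$ only yield (\ref{D_5}), namely $c_1+d_1=0$ and $k_2=0$. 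Now expand your candidate, $\widetilde v(b_1)\widetilde v(a_1)=\widetilde v(e_1)\bigl[(b_1[1])+(b_{-1}[1])\bigr]\widetilde v(e_1)$ with $\widetilde v(e_1)=e_1+c_3(b_1[3])+d_3(b_{-1}[3])+(\textnormal{index}\geq4\ \textnormal{terms})$. The component of index $4$ is
$$(b_1[1])\cdot d_3(b_{-1}[3])\ +\ c_3(b_1[3])\cdot(b_{-1}[1])\ =\ (c_3+d_3)(b_1[4]),$$
and the higher corrections $\gamma_j,\delta_j$ $(j\geq4)$ only contribute in index $\geq5$, so no choice of them can cancel this term. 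Since $(b_1[4])\neq0$ in $\wLa$ (it is killed only in $\La$, i.e.\ it lies in $\langle e_0\rangle$), your $\widetilde v$ fails to preserve the relation $b_1a_1=0$ whenever $c_3+d_3\neq0$.

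The paper's proof supplies exactly the missing normalization. If $c_3\neq0\neq d_3$, set $x:=(-c_3/d_3)^{1/2}$ and let $g\in\Aut(\La)$ rescale $g(a_1)=(1/x)a_1$, $g(b_1)=xb_1$, fixing all idempotents and other arrows; $g$ lifts trivially to $\wLa$. Then $v_g:=g^{-1}\circ v\circ g$ has coefficients $c=c_3/x$ and $d=d_3x$ satisfying $c+d=0$, so the $\type{D}_4$-style construction (your ansatz) does apply to $v_g$, and one takes $\widetilde v:=\widetilde g\circ\widetilde{v_g}\circ\widetilde g^{-1}$; the cases $c_3=d_3=0$ and exactly one of $c_3,d_3$ zero are handled separately. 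Unwinding this conjugation shows how your ansatz must be repaired: the idempotent series can be kept, but the arrows must be twisted, e.g.\ $\widetilde v(a_1)=(a_1+xa_{-1})\widetilde v(e_1)$ and $\widetilde v(b_1)=\widetilde v(e_1)(b_1+x^{-1}b_{-1})$, after which the index-$4$ obstruction becomes $(xd_3+x^{-1}c_3)(b_1[4])=0$ by the choice of $x$. Without this rescaling idea the construction breaks at the first nontrivial order.
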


\begin{proof}
If $c_3=0$ and $d_3=0$, then it is clear. 
Consider the case $c_3\neq0$ and $d_3\neq0$. 
Let $x:=(-c_3/d_3 )^{1/2}$. 
Define $g\in\Aut(\La)$ by 
$$g(a_1)=(1/x)a_1\ \textnormal{and}\ g(b_{1})=x b_{1}$$ 
 and fix all the idempotents and all the other arrows. 
Then we can clearly 
give a lifting $\widetilde{g}\in\Aut(\wLa)$ of $g$. 
Moreover, for $v_g:=g^{-1}\circ v\circ g$, we have 
$v_g(e_1)=e_1+c(b_{1}[3])+d(b_{-1}[3])$ and $v_g(e_{-1})=e_{-1}-c(b_{1}[3])-d(b_{-1}[3])$, where
$c:=c_3/x$ and $d:=d_3x=-c$.
Then we have $c+d=0$ and 
we can give a lifting $\widetilde{v_g}\in\Aut(\wLa)$ of $v_g$
by the similar argument of Proposition \ref{prop D4}. 
The case of $c_3=0$ and $d_3\neq 0$, or $c_3\neq0$ and $d_3=0$  follows from the second case.
\end{proof}

Next we consider $u$ and we give a correspondence 
$\tilde{u}(e_{\pm1}),\tilde{u}(a_{\pm1})$ and $\tilde{u}(b_{\pm1})$ by {\bf (i),(ii),(iii)}.

Then, by Lemma \ref{zero relation D_4}, we obtain the following result.

\begin{lemma}\label{zero relation D_5}
We have $\tilde{u}(b_{\pm 1})\tilde{u}(a_{\pm 1})=0.$
\end{lemma}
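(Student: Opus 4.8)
The plan is to reduce the claim to the computation already carried out in the proof of Lemma \ref{zero relation D_4}. The crucial observation is that the formulas \textbf{(i)}, \textbf{(ii)}, \textbf{(iii)} defining $\tilde{u}(e_{\pm 1})$, $\tilde{u}(a_{\pm 1})$ and $\tilde{u}(b_{\pm 1})$ are formally identical to the formulas defining $\tilde{f}(e_{\pm 1})$, $\tilde{f}(a_{\pm 1})$ and $\tilde{f}(b_{\pm 1})$ in the $\type{D}_4$ case, with the same conventions on the scalars: the coefficients $c_j,d_j$ for even $j$ obey the convolution recursion of \textbf{(i)}, while $c_j=0=d_j$ for odd $j\geq 3$, and $c_1+d_1=0$. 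The only genuine difference is that we now work inside the larger algebra $\wLa$ of type $\tilde{\type{D}}_5$ instead of $\tilde{\type{D}}_4$.

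First I would note that all the paths $(b_{\pm 1}[j])$, as well as the arrows $a_{\pm 1},b_{\pm 1}$, are supported entirely on the subquiver on the vertices $\{1,-1,2\}$, which is embedded identically in $\tilde{\type{D}}_4$ and $\tilde{\type{D}}_5$. Consequently the products $\tilde{u}(b_{1})\tilde{u}(a_{1})$ and $\tilde{u}(b_{-1})\tilde{u}(a_{-1})$ are computed by exactly the same path multiplications as in the $\type{D}_4$ setting; in particular, no new paths coming from the remaining vertices $3,4,0$ of $\tilde{\type{D}}_5$ can contribute. Thus, for each even $m\geq 2$, the coefficients $\Co(b_1[m])$, $\Co(b_1[m+1])$, $\Co(b_{-1}[m])$ and $\Co(b_{-1}[m+1])$ of $\tilde{u}(b_1)\tilde{u}(a_1)$ are given by the very same expressions displayed in the proof of Lemma \ref{zero relation D_4}.

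It then remains to verify that these expressions vanish under the hypotheses available here, which is immediate: the vanishing of $c_j$ and $d_j$ for odd $j\geq 3$ kills all the interior terms in each coefficient, and the relation $c_1+d_1=0$, which now holds by (\ref{D_5}) rather than (\ref{D_4}), eliminates the remaining contributions. Hence every coefficient is zero, so $\tilde{u}(b_1)\tilde{u}(a_1)=0$, and the case of $\tilde{u}(b_{-1})\tilde{u}(a_{-1})$ follows by the symmetric argument. The only point requiring care --- and what I regard as the main, if mild, obstacle --- is justifying that the passage from $\tilde{\type{D}}_4$ to $\tilde{\type{D}}_5$ really leaves the relevant product computation unchanged; once one records that $b_{\pm 1}a_{\pm 1}$ is supported on the common right-hand subquiver on $\{1,-1,2\}$, the argument of Lemma \ref{zero relation D_4} transfers verbatim.
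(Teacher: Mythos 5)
Your proposal is correct and follows essentially the same route as the paper, which simply invokes the condition (\ref{D_5}) (giving $c_1+d_1=0$) and repeats the coefficient calculation of Lemma \ref{zero relation D_4}; your extra observation that the computation transfers because it only involves the paths $(b_{\pm1}[j])$ and the relations $b_{\pm1}a_{\pm1}=0$ at the degree-one vertices (which are identical in $\widetilde{\type{D}}_4$ and $\widetilde{\type{D}}_5$) is a valid, slightly more careful justification of what the paper leaves implicit.
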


\begin{proof} 
By the condition (\ref{D_5}), the result follows from the same calculation of Lemma \ref{zero relation D_4}.
\end{proof}

Then we can obtain the following result.

\begin{lemma}\label{lift u}
For $u$, we have $\tilde{u}\in\Aut(\wLa)$ such that
$$\xymatrix@C45pt@R15pt{
\wLa\ar[d]_{\rm nat.} \ar[r]^{\tilde{u}}   &\ar[d]^{\rm nat.}\wLa  \\
\La \ar[r]^{u}  &\La}$$
by defining $\tilde{u}$ as follows $:$

$\bullet$ $\tilde{u}(e_{\pm1}),\tilde{u}(a_{\pm1})$ and $\tilde{u}(b_{\pm1})$ as {\bf (i),(ii),(iii)}.

$\bullet$ $\tilde{u}(e_2)=e_2$, $\tilde{u}(e_3)=e_3$, 
$\tilde{u}(a_2)=a_2$, $\tilde{u}(b_2)=k_1b_2$. 
\begin{eqnarray*}
\bullet\ \tilde{u}(e_4):=e_4+\tilde{f}_{\infty}(a_3,a_{-3}),\ \ \ 
\tilde{u}(e_{0}):=e_{0}-\tilde{f}_{\infty}(a_3,a_{-3}),\\
\tilde{u}(b_3):=k_1(b_3+b_{-3})\tilde{u}(e_4),\ \ \ 
\tilde{u}(b_{-3}):=k_1(b_3+b_{-3})\tilde{u}(e_{0}),\\
\tilde{u}(a_3):=\tilde{u}(e_4)(a_3+a_{-3}),\ \ \ 
\tilde{u}(a_{-3}):=\tilde{u}(e_{0})(a_3+a_{-3}).
\end{eqnarray*}
\end{lemma}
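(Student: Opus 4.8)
The plan is to mimic the proof of Proposition \ref{prop D4}: since $\tilde{u}$ is already prescribed on all generators, it suffices to check that this assignment respects every defining relation of the completed preprojective algebra $\wLa$ of type $\tilde{\type{D}}_5$, that the elements $\tilde{u}(e_i)$ form a complete set of orthogonal idempotents, and finally that $\tilde{u}$ is bijective and lifts $u$. The genuinely new feature compared with the $\type{D}_4$ case is that $\tilde{\type{D}}_5$ has two trivalent vertices, namely the branch vertex $2$ (carrying the legs to $1$ and $-1$) and the vertex $3$ to which the affine vertex $0$ is attached; so there will be two mesh relations to verify rather than one.

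First I would record the idempotent identities. Exactly as in the Remark following Proposition \ref{prop D4}, the coefficients $c_j,d_j$ $(j\geq 3)$ occurring in $\tilde{f}_{\infty}(b_1,b_{-1})$ are fixed by the Catalan recursion of {\bf (i)}, and this is precisely what forces $\tilde{u}(e_1)^2=\tilde{u}(e_1)$, $\tilde{u}(e_{-1})^2=\tilde{u}(e_{-1})$ and $\tilde{u}(e_1)\tilde{u}(e_{-1})=0=\tilde{u}(e_{-1})\tilde{u}(e_1)$; applying the same recursion to $\tilde{f}_{\infty}(a_3,a_{-3})$, whose coefficients are declared equal to those of $\tilde{f}_{\infty}(b_1,b_{-1})$, gives the analogous identities for $\tilde{u}(e_4),\tilde{u}(e_0)$. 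Together with $\tilde{u}(e_2)=e_2$, $\tilde{u}(e_3)=e_3$, the source/target compatibilities such as $\tilde{u}(a_1)=\tilde{u}(e_2)\tilde{u}(a_1)\tilde{u}(e_1)$, and the obvious disjointness of supports across distinct legs, this shows that $\{\tilde{u}(e_i)\}$ is a complete set of orthogonal idempotents; note that the corrections cancel in pairs, so $\sum_i\tilde{u}(e_i)=\sum_i e_i=1$.

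Next I would verify the arrow relations vertex by vertex. At the four leaves the relations are $\tilde{u}(b_{\pm1})\tilde{u}(a_{\pm1})=0$ and $\tilde{u}(a_{\pm3})\tilde{u}(b_{\pm3})=0$: the first pair is exactly Lemma \ref{zero relation D_5}, and the second pair follows from the identical computation of Lemma \ref{zero relation D_4} because $\tilde{f}_{\infty}(a_3,a_{-3})$ and $\tilde{f}_{\infty}(b_1,b_{-1})$ share the same coefficients, precisely as in case (b) of Proposition \ref{prop D4}. For the two mesh relations I would exploit that the idempotent corrections telescope: from {\bf (ii)} and {\bf (iii)} one gets $\tilde{u}(a_1+a_{-1})=a_1+a_{-1}$, $\tilde{u}(b_1+b_{-1})=k_1(b_1+b_{-1})$, $\tilde{u}(a_3+a_{-3})=a_3+a_{-3}$ and $\tilde{u}(b_3+b_{-3})=k_1(b_3+b_{-3})$, while $\tilde{u}(a_2)=a_2$ and $\tilde{u}(b_2)=k_1b_2$ (here the identity $k_2=0$ of (\ref{D_5}) is what makes $\tilde{u}(b_2)$ this simple). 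Since all cross terms such as $\tilde{u}(a_1)\tilde{u}(b_{-1})$ vanish because $\tilde{u}(e_1)\tilde{u}(e_{-1})=0$, and likewise $\tilde{u}(b_3)\tilde{u}(a_{-3})=0$ because $\tilde{u}(e_4)\tilde{u}(e_0)=0$, the relation at vertex $2$ reduces to $(a_1+a_{-1})k_1(b_1+b_{-1})=k_1b_2a_2$ and the relation at vertex $3$ reduces to $k_1(b_3+b_{-3})(a_3+a_{-3})=k_1a_2b_2$, both of which hold in $\wLa$ by the defining relations at vertices $2$ and $3$. This establishes that $\tilde{u}$ is a well-defined algebra endomorphism.

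Finally, for the automorphism and lifting claims: $\tilde{u}$ preserves the $\mathfrak{m}$-adic filtration and reduces to the identity modulo $\mathfrak{m}$, while on $\mathfrak{m}/\mathfrak{m}^2$ it sends each $a_i\mapsto a_i$ and each $b_i\mapsto k_1b_i$ with $k_1\in K^\times$; as the associated graded algebra is generated in degrees $\leq 1$, the induced graded map is bijective and hence $\tilde{u}$ is an automorphism of the complete algebra $\wLa$. The commutative square is checked on generators along the natural surjection $\wLa\to\La=\wLa/\langle e_0\rangle$: every extra term of $\tilde{f}_{\infty}(a_3,a_{-3})$ dies because it factors through the vertex $0$, and the extra terms of $\tilde{f}_{\infty}(b_1,b_{-1})$ not already present in $u(e_1)$ either have vanishing coefficient (the odd $j$, in particular $j=3$) or are paths that are already zero in the finite-dimensional $\La$ (the $j\geq4$ terms, which is exactly why $f(e_1)$ was a finite sum up to $j=3$); hence $\mathrm{nat}\circ\tilde{u}=u\circ\mathrm{nat}$. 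I expect the main obstacle to be the bookkeeping of the infinite $\mathfrak{m}$-adic series $\tilde{f}_{\infty}$: one must be certain that the single family of Catalan coefficients chosen in {\bf (i)} simultaneously makes each $\tilde{u}(e_i)$ genuinely idempotent, makes the leaf relations of Lemma \ref{zero relation D_5} hold to all orders, and collapses to the correct finite expression after restriction along $\wLa\to\La$.
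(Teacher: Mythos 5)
Your proposal is correct and follows essentially the same route as the paper's proof: you check exactly the four defining relations that the paper checks — the leaf relations via Lemma \ref{zero relation D_5} and the repetition of the $\type{D}_4$ computation for $\tilde{u}(a_{\pm3})\tilde{u}(b_{\pm3})$, and the two mesh relations at vertices $2$ and $3$, which collapse to $k_1b_2a_2=(a_1+a_{-1})k_1(b_1+b_{-1})$ and $k_1a_2b_2=k_1(b_3+b_{-3})(a_3+a_{-3})$ using $\tilde{u}(e_1)+\tilde{u}(e_{-1})=e_1+e_{-1}$ and $\tilde{u}(e_4)+\tilde{u}(e_0)=e_4+e_0$. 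The additional verifications you supply (orthogonality and completeness of the idempotents, bijectivity via the associated graded of the $\mathfrak{m}$-adic filtration, and the generator-by-generator check of the commutative square) are precisely the points the paper's proof leaves implicit as clear, so they refine rather than alter the argument.
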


\begin{proof}
We need to check the following relations

\[\left\{\begin{array}{lll}
\widetilde{u}(b_{\pm 1})\widetilde{u}(a_{\pm 1})=0 &&\textnormal{(a)}\\
 \widetilde{u}(a_{\pm 3})\widetilde{u}(b_{\pm 3})=0 
&&\textnormal{(b)}\\
\widetilde{u}(b_2) \widetilde{u}(a_2)=\widetilde{u}(a_1+a_{-1})\widetilde{u}(b_1+b_{-1}) &&\textnormal{(c)}\\
\widetilde{u}(a_{2})\widetilde{u}(b_{2})=\widetilde{u}(b_{3}+b_{-3})\widetilde{u}(a_{3}+a_{-3}) &&\textnormal{(d)}
 \end{array}\right.\]

Note that we have $\tilde{u}(e_{4})+\tilde{u}(e_{0})=e_4+e_0$.

(a) This follows from Lemma \ref{zero relation D_5} and it also implies (b) similarly.

(c) This follows from 
$\tilde{u}(b_{2})\tilde{u}(a_{2})=k_1b_{2}a_{2}=
(a_1+a_{-1})k_1(b_1+b_{-1})=\tilde{u}(a_1+a_{-1})\tilde{u}(b_1+b_{-1})$ and (d) follows from the same argument.

Thus $\widetilde{u}$ gives a morphism of $\Aut(\wLa)$. 
The second statement is clear from the definition of $u$ and $\widetilde{u}$.
\end{proof}

As a consequence of Lemma \ref{lift v} and \ref{lift u}, we obtain the desired conclusion for $\type{D}_5$.

\subsection{The case $\type{D}_{n+1}$ for even.}

In this subsection, we show Proposition \ref{restriction} in the case of $\type{D}_{n+1}$ when $n+1$ is even. This is shown by the same argument of $\type{D}_{4}$.

Assume that $n+1$ ($n\geq3$) is even and consider $\type{D}_{n+1}$.
Fix $f\in H$. 
By Lemma \ref{P_1}, without loss of generality, 
we can write 
\begin{eqnarray*}
f(e_1)=e_1+\sum^{n-1}_{j=1}c_j(b_1 [j])+\sum^{n-1}_{j=1}d_j(b_{-1}[j]), \\
f(e_{-1})=e_{-1}-\sum^{n-1}_{j=1}c_j(b_{1}[j])-\sum^{n-1}_{j=1}d_j(b_{-1}[j]),
\end{eqnarray*}
where  
$$-c_j=c_1d_{j-1}+c_2c_{j-2}+c_3d_{j-3}+c_4c_{j-4}+\cdots+c_{j-2}c_{2}+c_{j-1}d_{1}=d_{j}$$ 
for even $j$.

Then, we have
$f(a_1)=(a_1+a_{-1})f(e_1)=a_1+\sum^{n-2}_{j=1}c_ja_1(b_{1}[j])+\sum^{n-2}_{j=1}d_ja_{-1}(b_{-1}[j])$
and $f(a_{-1})=a_{-1}-\sum^{n-2}_{j=1}c_ja_1(b_{1}[j])-\sum^{n-2}_{j=1}d_ja_{-1}(b_{-1}[j]).$

Furthermore, since $f(b_1+b_{-1})=\sum^{n-2}_{j=1}k_j(b_1+b_{-1})(b_2a_2)^{j-1}
=\sum^{n-2}_{j=1}k_j\{( b_1[j-1])b_{\overline{1}}+(b_{-1}[j-1])b_{\overline{-1}})\}$, we have
\begin{eqnarray*}
f(b_1)&=&k_1\{b_1+\sum^{n-2}_{j=1}c_j(b_{1}[j])b_{\overline{1}}+\sum^{n-2}_{j=1}d_j(b_{-1}[j])b_{\overline{-1}}\}\\
&+&k_2\{(b_1[1])b_{-1}+\sum^{n-3}_{j=1}c_j(b_{1}[j+1])b_{\overline{1}}+\sum^{n-3}_{j=1}d_j(b_{-1}[j+1])b_{\overline{-1}}\}\\
&\vdots&\cdots\\
&+&k_{n-2}\{ (b_1 [n-3])b_{\overline{1}}+\sum^{1}_{j=1}c_j(b_{1}[j+n-3])b_{\overline{1}}+\sum^{1}_{j=1}d_j(b_{-1}[j+n-3])b_{\overline{-1}}\},\\
f(b_{-1})&=&k_1\{b_{-1}-\sum^{n-2}_{j=1}c_j(b_{1}[j])b_{\overline{1}}-\sum^{n-2}_{j=1}d_j(b_{-1}[j])b_{\overline{-1}}\}\\
&+&k_2\{(b_{-1}[1])b_{1}-\sum^{n-3}_{j=1}c_j(b_{1}[j+1])b_{\overline{1}}-\sum^{n-3}_{j=1}d_j(b_{-1}[j+1])b_{\overline{-1}}\}\\
&\vdots&\cdots\\
&+&k_{n-2}\{ (b_{-1} [n-3])b_{\overline{1}}-\sum^{1}_{j=1}c_j(b_{1}[j+n-3])b_{\overline{1}}-\sum^{1}_{j=1}d_j(b_{-1}[j+n-3])b_{\overline{-1}}\}.
\end{eqnarray*}

By the relations $f(b_1)f(a_1)=0$ and $f(b_{-1})f(a_{-1})=0$, we have the following conditions.

\begin{lemma}\label{vanish f even}
For any even $m$ with $2\leq m \leq n-2$, we have $k_m=0$ and $c_{m-1}+d_{m-1}=0.$ 
\end{lemma}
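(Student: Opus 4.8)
The plan is to read off the two asserted identities directly from the leaf relations of the quiver. Since $1$ and $-1$ are the leaves, we have $b_{\pm 1}a_{\pm 1}=0$ in $\La$, so for the algebra automorphism $f$ we get $f(b_{1})f(a_{1})=f(b_{1}a_{1})=0$ and likewise $f(b_{-1})f(a_{-1})=0$. First I would substitute the explicit formulas for $f(b_{\pm 1})$ and $f(a_{\pm 1})$ displayed just above the statement and expand each product. Both products lie in $(e_{1}+e_{-1})\La(e_{1}+e_{-1})$, for which the paths $(b_{1}[m])$ and $(b_{-1}[m])$ form a basis; hence the two relations are equivalent to the vanishing of every coefficient $\Co(b_{1}[m])$ of $f(b_1)f(a_1)$ and every coefficient $\Co(b_{-1}[m])$ of $f(b_{-1})f(a_{-1})$.

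To compute these coefficients I would use two multiplication rules coming from the leaf relations: appending $b_{\overline{\pm 1}}a_{\varepsilon}$ to a path $(b_{\pm 1}[p])$ produces $(b_{\pm 1}[p+1])$ when the sign $\varepsilon$ matches the parity of $p$, and gives $0$ otherwise (this is exactly $b_{1}a_{1}=b_{-1}a_{-1}=0$), while two composable paths $(b_{\pm 1}[\cdot])$ concatenate additively in their indices. With these rules each $\Co(b_{\pm 1}[m])$ becomes an explicit convolution in the scalars $k_{i},c_{j},d_{j}$, of exactly the shape already displayed in the $\type{D}_{4}$ computation (Lemma \ref{zero relation D_4}), the only difference being that now all the $k_{i}$, not just $k_{1}$, survive. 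The decisive point is the sign pattern: because of the minus signs in $f(b_{-1})$ and $f(a_{-1})$, the coefficient $\Co(b_{-1}[m])$ carries the term $-k_{1}(c_{m-1}+d_{m-1})$ precisely where $\Co(b_{1}[m])$ carries $+k_{1}(c_{m-1}+d_{m-1})$, whereas the top term $k_{m}$ occurs with the same sign in both. Throughout I would freely use the idempotency relations already recorded, namely $c_{j}+d_{j}=0$ for even $j$.

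I would then argue by induction on even $m$. For the base case $m=2$, direct expansion gives $\Co(b_{1}[2])=k_{1}(c_{1}+d_{1})+k_{2}$ and $\Co(b_{-1}[2])=-k_{1}(c_{1}+d_{1})+k_{2}$; setting both to zero and using $k_{1}\in K^{\times}$ forces $k_{2}=0$ and $c_{1}+d_{1}=0$. For the inductive step, assuming $k_{m'}=0$ and $c_{m'-1}+d_{m'-1}=0$ for every even $m'<m$ — so that $c_{j}+d_{j}=0$ for all $j\le m-2$ — all the lower-order terms in the convolutions $\Co(b_{\pm 1}[m])$ cancel, collapsing them to $\pm k_{1}(c_{m-1}+d_{m-1})+k_{m}$ exactly as in the base case; adding the two vanishing equations yields $k_{m}=0$, and subtracting them yields $c_{m-1}+d_{m-1}=0$.

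The hard part will be the combinatorial bookkeeping inside this collapse: one must track precisely which path products survive the parity and sign matching, and then check that, under the inductive hypothesis, every intermediate convolution term indeed cancels so that only the two leading contributions $\pm k_{1}(c_{m-1}+d_{m-1})$ and $k_{m}$ remain. Keeping the signs of the two relations straight in parallel is the delicate point; once the collapse is verified the conclusion is immediate.
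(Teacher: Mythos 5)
Your proposal is correct and follows essentially the same route as the paper: expand $f(b_{\pm 1})f(a_{\pm 1})=0$ in the path basis $(b_{\pm 1}[j])$, extract the convolution-type coefficient identities, and induct on even $m$, using the previously established vanishings to collapse each coefficient to the two leading terms involving $k_{1}(c_{m-1}+d_{m-1})$ and $k_{m}$. The only discrepancy is an overall sign convention on the second product (the paper records $\Co(b_{-1}[m])$ as $+k_{1}(c_{m-1}+d_{m-1})-k_{m}+\cdots$ where you have its negative), which is immaterial since both expressions are set equal to zero and yield the same two conclusions.
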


\begin{proof} 
Because $f\in \Aut(\La)$, we have $f(b_1)f(a_1)=0$ and $f(b_{-1})f(a_{-1})=0$, and we show that these conditions implies the desired result.

(i) First we calculate $f(b_1)f(a_1)$. 
We denote by $\Co(b_{\pm 1} [j])$ the coefficient of $(b_{\pm 1} [j])$ in $f(b_1)f(a_1)$  for any $j$.  
Recall that we have $(b_{\pm 1} [j])=0$ for any $n-1\leq j$
 and hence 
we will check $\Co(b_{\pm 1} [j])$ for  $j\leq n-2$.

Fix even $m$ with $m\leq n-2$. By the direct calculation, we can check 

\begin{eqnarray*}
\Co(b_1[m])&=&k_1\{(c_{m-1}+d_{m-1})+c_2(c_{m-3}+d_{m-3})+c_4(c_{m-5}+d_{m-5})+\cdots+c_{m-2}(c_{1}+d_{1})   \} \nonumber \\
&+&k_2c_{m-2}\nonumber \\
&+&k_3\{(c_{m-3}+d_{m-3})+c_2(c_{m-5}+d_{m-5})+\cdots+c_{m-4}(c_{1}+d_{1})   \}\nonumber \\
&+&k_4c_{m-4}\\
&\vdots&\cdots\nonumber \\
&+&k_{m-1}(c_1+d_1)\nonumber \\
&+&k_{m}\nonumber, \\
\Co(b_1 [m+1])&=&k_1\{c_1(c_{m-1}+d_{m-1})+c_3(c_{m-3}+d_{m-3})+c_5(c_{m-5}+d_{m-5})+\cdots+c_{m-1}(c_{1}+d_{1})   \}\\
&+&k_2c_{m-1}\\
&+&k_3\{c_1(c_{m-3}+d_{m-3})+c_3(c_{m-5}+d_{m-5})+\cdots+c_{m-3}(c_{1}+d_{1})   \}\\
&+&k_4c_{m-3}\\
&\vdots&\cdots\\
&+&k_{m-1}\{c_1(c_1+d_1)\}\\
&+&k_{m}c_1,\\
\Co(b_{-1}[m])&=&k_1\{d_2(c_{m-3}+d_{m-3})+d_4(c_{m-5}+d_{m-5})+d_6(c_{m-7}+d_{m-7})+\cdots+d_{m-2}(c_{1}+d_{1})   \} \nonumber \\
&+&k_2d_{m-2}\nonumber \\
&+&k_3\{(d_2(c_{m-5}+d_{m-5})+d_4(c_{m-7}+d_{m-7})+\cdots+d_{m-4}(c_{1}+d_{1})   \}\nonumber \\
&+&k_4d_{m-4}\\
&\vdots&\cdots\nonumber \\
&+&k_{m-3}\{d_2(c_1+d_1)\}\nonumber \\
&+&k_{m-2}d_2\nonumber, \\
\Co(b_{-1} [m+1])&=&k_1\{d_1(c_{m-1}+d_{m-1})+d_3(c_{m-3}+d_{m-3})+d_5(c_{m-5}+d_{m-5})+\cdots+d_{m-1}(c_{1}+d_{1})   \}\\
&+&k_2d_{m-1}\\
&+&k_3\{d_1(c_{m-3}+d_{m-3})+d_3(c_{m-5}+d_{m-5})+\cdots+d_{m-3}(c_{1}+d_{1})   \}\\
&+&k_4d_{m-3}\\
&\vdots&\cdots\\
&+&k_{m-1}\{d_1(c_1+d_1)\}\\
&+&k_{m}d_1.
\end{eqnarray*}

(ii) Next, we calculate the coefficients of $f(b_{-1})f(a_{-1})$. 
Similarly we have 

\begin{eqnarray*}
\Co(b_1[m])&=&k_1\{c_2(c_{m-3}+d_{m-3})+c_4(c_{m-5}+d_{m-5})+c_6(c_{m-7}+d_{m-7})+\cdots+c_{m-2}(c_{1}+d_{1})   \} \nonumber \\
&+&k_2d_{m-2}\nonumber \\
&+&k_3\{c_2(c_{m-5}+d_{m-5})+c_4(c_{m-7}+d_{m-7})+\cdots+c_{m-4}(c_{1}+d_{1})   \}\nonumber \\
&+&k_4d_{m-4}\\
&\vdots&\cdots\nonumber \\
&+&k_{m-3}\{c_2(c_1+d_1)\}\nonumber \\
&+&k_{m-2}d_2\nonumber, \\
\Co(b_1 [m+1])&=&k_1\{c_1(c_{m-1}+d_{m-1})+c_3(c_{m-3}+d_{m-3})+c_5(c_{m-5}+d_{m-5})+\cdots+c_{m-1}(c_{1}+d_{1})   \}\\
&-&k_2c_{m-1}\\
&+&k_3\{c_1(c_{m-3}+d_{m-3})+c_3(c_{m-5}+d_{m-5})+\cdots+c_{m-3}(c_{1}+d_{1})   \}\\
&-&k_4c_{m-3}\\
&\vdots&\cdots\\
&+&k_{m-1}\{c_1(c_1+d_1)\}\\
&-&k_{m}c_1,\\
\Co(b_{-1}[m])&=&k_1\{(c_{m-1}+d_{m-1})+c_2(c_{m-3}+d_{m-3})+c_4(c_{m-5}+d_{m-5})+\cdots+c_{m-2}(c_{1}+d_{1})   \} \nonumber \\
&+&k_2d_{m-2}\nonumber \\
&+&k_3\{(c_{m-3}+d_{m-3})+c_2(c_{m-5}+d_{m-5})+\cdots+c_{m-4}(c_{1}+d_{1})   \}\nonumber \\
&+&k_4d_{m-4}\\
&\vdots&\cdots\nonumber \\
&+&k_{m-1}(c_1+d_1)\nonumber \\
&-&k_{m}\nonumber, \\
\Co(b_{-1} [m+1])&=&k_1\{d_1(c_{m-1}+d_{m-1})+d_3(c_{m-3}+d_{m-3})+d_5(c_{m-5}+d_{m-5})+\cdots+d_{m-1}(c_{1}+d_{1})   \}\\
&-&k_2d_{m-1}\\
&+&k_3\{d_1(c_{m-3}+d_{m-3})+d_3(c_{m-5}+d_{m-5})+\cdots+d_{m-3}(c_{1}+d_{1})   \}\\
&-&k_4d_{m-3}\\
&\vdots&\cdots\\
&+&k_{m-1}\{d_1(c_1+d_1)\}\\
&-&k_{m}d_1.
\end{eqnarray*}

Since ${f}(b_{1}){f}(a_{1})=0$ and ${f}(b_{-1}){f}(a_{-1})=0$, 
all coefficients are zero. 

First, consider the case of $m=2$. Then from (i) and (ii), we have 
$k_{1}(c_1+d_1)+k_2=0$ and $k_{1}(c_1+d_1)-k_2=0$.
Therefore we have $k_2=0$ and $c_1+d_1=0$. 

Next consider the case of $m=4$. Then the above calculations  similarly imply that $k_4=0$ and $c_3+d_3=0$. 
Inductively, we can get $k_m=0$ and $c_{m-1}+d_{m-1}=0$ for any even $m$ with $2\leq m\leq n-2$, and we get the conclusion. 
\end{proof}

Next using the coefficients $c_j,d_j$ ($j\leq n-1$), we give the following correspondence, and we will show that it gives an automorphism of $\wLa$.

$\bf{(i)}'$ First we define
\begin{eqnarray*}
\tilde{f}(e_1):=e_1+\tilde{f}_{\infty}(b_1,b_{-1}),\ \ \ 
\tilde{f}(e_{-1}):=e_1-\tilde{f}_{\infty}(b_1,b_{-1}), 
\end{eqnarray*}
where we define $c_j$ and $d_j$ ($j\geq n$) as follows.
For odd $j$, we put $c_{j}=0=d_j$ for any $j$. 
For even $j$, we define 
$$-c_j=c_1d_{j-1}+c_2c_{j-2}+c_3d_{j-3}+c_4c_{j-4}+\cdots+c_{j-2}c_{2}+c_{j-1}d_{1}=d_{j}.$$

Note that 
we have $\tilde{f}(e_1)+\tilde{f}(e_{-1})=e_1+e_{-1}$, $\tilde{f}(e_1)^2=\tilde{f}(e_1)$, $\tilde{f}(e_{-1})^2=\tilde{f}(e_{-1})$, 
$\tilde{f}(e_1)\tilde{f}(e_{-1})=0$ and $\tilde{f}(e_{-1})\tilde{f}(e_1)=0$.

$\bf{(ii)}'$ Secondly, we define  
$\tilde{f}(a_1):=(a_1+a_{-1})\tilde{f}(e_1)$ and $\tilde{f}(a_{-1}):=(a_1+a_{-1})\tilde{f}(e_{-1})$.

$\bf{(iii)}'$ Thirdly,  
we define $\tilde{f}(b_1):=\tilde{f}(e_1)\sum^{n-2}_{j=1}k_j(b_1+b_{-1})(b_2a_2)^{j-1}$ and  
$\tilde{f}(b_{-1}):=\tilde{f}(e_{-1})\sum^{n-2}_{j=1}k_j(b_1+b_{-1})(b_2a_2)^{j-1}$. 
Note that, since $\sum^{n-2}_{j=1}k_j(b_1+b_{-1})(b_2a_2)^{j-1}=\sum^{n-2}_{j=1}k_j\{ (b_1[j-1])b_{\overline{1}}+(b_{-1}[j-1])b_{\overline{-1}}\}$, 
we can write  
\begin{eqnarray*}
\tilde{f}(b_1)
&=&k_1\{b_1+\sum^{\infty}_{j=1}c_j(b_{1}[j])b_{\overline{1}}+\sum^{\infty}_{j=1}d_j(b_{-1}[j])b_{\overline{-1}}\}\\
&+&k_2\{(b_1[1])b_{-1}+\sum^{\infty}_{j=1}c_j(b_{1}[j+1])b_{\overline{1}}+\sum^{\infty}_{j=1}d_j(b_{-1}[j+1])b_{\overline{-1}}\}\\
&\vdots&\cdots\\
&+&k_{n-2}\{ (b_1 [n-3])b_{1}+\sum^{\infty}_{j=1}c_j(b_{1}[j+n-3])b_{\overline{1}}+\sum^{\infty}_{j=1}d_j(b_{-1}[j+n-3])b_{\overline{-1}}\}.
\end{eqnarray*}

In this setting, we can check the following lemma.

\begin{lemma}\label{zero relation even}
We have $\tilde{f}(b_{\pm 1})\tilde{f}(a_{\pm 1})=0.$
\end{lemma}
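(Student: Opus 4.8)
The plan is to expand both products $\tilde f(b_1)\tilde f(a_1)$ and $\tilde f(b_{-1})\tilde f(a_{-1})$ in the path basis of $\wLa$ and to show that every coefficient vanishes. Writing $\Co(b_{\pm 1}[m])$ for the coefficient of $(b_{\pm 1}[m])$ in $\tilde f(b_1)\tilde f(a_1)$, the first observation I would make is that, by $\mathbf{(ii)}'$ and $\mathbf{(iii)}'$, the elements $\tilde f(a_1)=(a_1+a_{-1})\tilde f(e_1)$ and $\tilde f(b_1)=\tilde f(e_1)\sum_{j=1}^{n-2}k_j(b_1+b_{-1})(b_2a_2)^{j-1}$ are exactly the completions, with the same scalars $k_j$ and $c_j,d_j$, of the elements $f(a_1),f(b_1)$ analysed in the proof of Lemma \ref{vanish f even}. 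Hence for every $m$ the coefficient $\Co(b_{\pm 1}[m])$ is given by literally the same polynomial expression in the $k_j,c_j,d_j$ displayed there; the only change is that in the affine preprojective algebra $\wLa$ the paths $(b_{\pm 1}[m])$ are nonzero and linearly independent for all $m$, so no path is truncated and I must check the vanishing of $\Co(b_{\pm 1}[m])$ for all $m\ge 1$ rather than only in the range where the path survives in $\La$.

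First I would collect the vanishing inputs. The idempotency relation built into $\mathbf{(i)}'$, namely $-c_j=(\text{convolution})=d_j$ for even $j$, already forces $c_j+d_j=0$ for every even $j$; the convention $c_j=d_j=0$ for odd $j\ge n$ fixed in $\mathbf{(i)}'$ gives $c_j+d_j=0$ there as well; and Lemma \ref{vanish f even}, together with its boundary instance $m=n-1$ (obtained from the same relation $\Co(b_1[n-1])=0$ in $\La$, valid because $(b_1[n-1])\neq 0$ there, whose leading term is $k_1(c_{n-2}+d_{n-2})$), yields $c_j+d_j=0$ for every odd $j\le n-2$ and $k_j=0$ for every even $j\le n-2$. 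Since $n+1$ is even, every odd index is either $\le n-2$ or $\ge n$, so in fact $c_j+d_j=0$ for all $j$, while $k_j=0$ for all even $j$.

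With these inputs in hand, I would inspect the coefficient formulas exactly as in the proof of Lemma \ref{vanish f even}: each $\Co(b_{\pm 1}[m])$ is a finite $K$-linear combination (finite because $k_j=0$ for $j>n-2$) in which every monomial carries as a factor either a $k_j$ with $j$ even or a $(c_i+d_i)$ with $i$ odd. Both kinds of factor vanish, so $\Co(b_{\pm 1}[m])=0$ for all $m$ and thus $\tilde f(b_1)\tilde f(a_1)=0$; the identity $\tilde f(b_{-1})\tilde f(a_{-1})=0$ then follows from the symmetric computation, which is the second half of the proof of Lemma \ref{vanish f even}. This reduces the lemma to the same bookkeeping already carried out for $\type{D}_4$ in Lemma \ref{zero relation D_4}.

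The hard part will be precisely this bookkeeping: justifying that the generic coefficient formula of Lemma \ref{vanish f even} propagates verbatim to all $m$ once we pass to $\wLa$, i.e. that removing the path-truncation of $\La$ introduces no new linear relations among the $(b_{\pm 1}[m])$ and no new cross-terms, and then verifying that every monomial in each $\Co(b_{\pm 1}[m])$ really does contain one of the two vanishing factors $k_{\text{even}}$ or $(c_{\text{odd}}+d_{\text{odd}})$. I do not expect any conceptual difficulty beyond the pattern already visible in Lemma \ref{zero relation D_4}, but the indexing near the boundary $m\approx n-1$, and in particular the need to invoke the $m=n-1$ case of Lemma \ref{vanish f even} in order to kill $c_{n-2}+d_{n-2}$, is the point that will require the most care.
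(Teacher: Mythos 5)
Your proof is correct and rests on the same machinery as the paper's --- the coefficient formulas displayed in the proof of Lemma \ref{vanish f even} --- but it organizes the argument differently, and at one point it is more careful than the published proof. The paper splits into two ranges: for $j\le n-1$ it uses no formulas at all, simply observing that the coefficient of $(b_{\pm 1}[j])$ in $\tilde{f}(b_1)\tilde{f}(a_1)$ equals the corresponding coefficient in $f(b_1)f(a_1)=0$, which vanishes because the paths $(b_{\pm 1}[j])$ with $j\le n-1$ are nonzero and linearly independent in $\La$; only for $j\ge n$ does it invoke the explicit formulas together with $c_j=d_j=0$ for odd $j\ge n$ and $k_m=0$ for even $m$. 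You instead run the formula argument uniformly over all $m$, which obliges you to assemble the full list of scalar identities first, including the boundary identity $c_{n-2}+d_{n-2}=0$ extracted from $\Co(b_1[n-1])=0$ in $\La$. That step is exactly right, and it is not optional: since $n$ is odd here, Lemma \ref{vanish f even} (stated for even $m$ with $2\le m\le n-2$) only yields $c_i+d_i=0$ for odd $i\le n-4$, while for even $m\ge n+1$ the coefficient $\Co(b_1[m])$ contains the monomial $k_1c_{m-n+1}(c_{n-2}+d_{n-2})$, which no other hypothesis kills. In fact the paper's own step for $j\ge n$ tacitly uses this same identity without establishing it, so your explicit treatment of the $m=n-1$ instance fills a small gap in the published argument; the trade-off is that the paper's handling of $j\le n-1$ by direct comparison with the zero product in $\La$ is shorter, since in that range it needs neither the formulas nor the boundary identity.
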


\begin{proof}
We only show $\tilde{f}(b_{1})\tilde{f}(a_{1})=0$, and $\tilde{f}(b_{-1})\tilde{f}(a_{-1})=0$ can be shown by the same argument. 
In the case of $j\leq n-1$, 
the coefficient $\Co(b_{\pm 1}[j])$ in $\tilde{f}(b_{ 1})\tilde{f}(a_{1})$ is the same as  
the coefficient $\Co(b_{\pm 1}[j])$ in ${f}(b_{ 1}){f}(a_{1})$, and hence they are zero. 

Moreover the same calculation of Lemma \ref{vanish f even}
 implies that $\Co(b_{\pm 1}[j])=0$ in $\tilde{f}(b_{1})\tilde{f}(a_{1})$ for any $n\leq j$ because $c_j=0=d_j$ for odd $j\geq n$ and $k_m=0$ for even $m\geq 2$.
\end{proof}

Then we give a proof of Proposition \ref{restriction} as follows.

\begin{proposition}
For $f\in H$, we have $\tilde{f}\in\Aut(\wLa)$ such that
$$\xymatrix@C45pt@R15pt{
\wLa\ar[d]_{\rm nat.} \ar[r]^{\tilde{f}}   &\ar[d]^{\rm nat.}\wLa  \\
\La \ar[r]^{f}  &\La}$$
by defining $\tilde{f}$ as follows.

$\bullet$ $\tilde{f}(e_{\pm1}),\tilde{f}(a_{\pm1})$ and $\tilde{f}(b_{\pm1})$ as ${\bf (i)',(ii)',(iii)'}$.

$\bullet$ 
$\tilde{f}(e_{i}):=e_i$  $(2\leq i\leq n-1)$, $\tilde{f}(a_{i}):=a_i$ and $\tilde{f}(b_{i}):=\sum^{n-2}_{j=1}k_jb_i(a_ib_i)^{j-1}$ $(2\leq i\leq n-2)$.

\begin{eqnarray*}
\bullet\ \tilde{f}(e_n):=e_n+\tilde{f}_{\infty}(a_{n-1},a_{-(n-1)}),\ \ \ 
\tilde{f}(e_{0}):=e_{0}-\tilde{f}_{\infty}(a_{n-1},a_{-(n-1)}),\\
\tilde{f}(b_{n-1}):=k_1(b_{n-1}+b_{-(n-1)})\tilde{f}(e_n),\ \ \ 
\tilde{f}(b_{-(n-1)}):=k_1(b_{n-1}+b_{-(n-1)})\tilde{f}(e_{0}),\\
\tilde{f}(a_{n-1}):=\tilde{f}(e_n)\sum^{n-2}_{j=1}k_j/k_1(a_{n-1}+a_{-(n-1)})(a_{n-2}b_{n-2})^{j-1},\\
\tilde{f}(a_{-(n-1)}):=\tilde{f}(e_{0})\sum^{n-2}_{j=1}k_j/k_1(a_{n-1}+a_{-(n-1)})(a_{n-2}b_{n-2})^{j-1}.
\end{eqnarray*}
\end{proposition}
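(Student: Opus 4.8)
The plan is to verify that $\tilde{f}$ respects the preprojective relations of $\tilde{\type{D}}_{n+1}$, so that it defines an algebra endomorphism of $\wLa$, then to promote it to an automorphism by exploiting the $\mathfrak{m}$-adic completeness of $\wLa$, and finally to read off that it lifts $f$. Since $\tilde{f}$ is prescribed on a complete set of generators (the idempotents $e_i$ and the arrows $a_i,b_i$, including those at the added vertex $0$), all the content lies in the relations. A preliminary point is that the infinite sums $\tilde{f}_\infty(b_1,b_{-1})$ and $\tilde{f}_\infty(a_{n-1},a_{-(n-1)})$ converge in $\wLa$, because $(b_{\pm1}[j])$ and $(a_{\pm(n-1)}[j])$ lie in $\mathfrak{m}^{2j}$; this is exactly where passing to the completion is needed.

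First I would dispose of the idempotent and source--target relations, which are immediate from the construction: part $\mathbf{(i)}'$ forces $\tilde{f}(e_1),\tilde{f}(e_{-1})$ to be orthogonal idempotents with $\tilde{f}(e_1)+\tilde{f}(e_{-1})=e_1+e_{-1}$, the matching definitions at the pair $\{n,0\}$ give the analogue there, and in $\mathbf{(ii)}'$--$\mathbf{(iii)}'$ every arrow is built by multiplying the appropriate idempotent images on the correct side. The substance is the mesh relations. Those at the two folding vertices $\pm1$, namely $\tilde{f}(b_{\pm1})\tilde{f}(a_{\pm1})=0$, are precisely Lemma \ref{zero relation even}; the symmetric relations $\tilde{f}(a_{\pm(n-1)})\tilde{f}(b_{\pm(n-1)})=0$ follow from the identical computation, since the scalars $c_j,d_j$ governing $\tilde{f}_\infty(a_{n-1},a_{-(n-1)})$ coincide with those of $\tilde{f}_\infty(b_1,b_{-1})$. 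At the trivalent vertex $2$ I would use $\tilde{f}(e_1)+\tilde{f}(e_{-1})=e_1+e_{-1}$ and the relation $b_2a_2=(a_1+a_{-1})(b_1+b_{-1})$ to obtain
$$\tilde{f}(b_2)\tilde{f}(a_2)=\sum_{j=1}^{n-2}k_j(b_2a_2)^j=\tilde{f}(a_1+a_{-1})\tilde{f}(b_1+b_{-1}),$$
and the relation at vertex $n-1$ follows symmetrically via $\tilde{f}(e_n)+\tilde{f}(e_0)=e_n+e_0$. At each interior vertex $3\le i\le n-2$ the relation reduces to applying the single power series $\sum_j k_j x^j$ to both sides of the preserved identity $b_ia_i=a_{i-1}b_{i-1}$, hence holds as well.

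The main obstacle is the coefficient bookkeeping behind Lemma \ref{zero relation even} (and its symmetric counterpart): one must confirm that every coefficient $\Co(b_{\pm1}[j])$ in the expansion of $\tilde{f}(b_1)\tilde{f}(a_1)$ vanishes, not merely for $j\le n-1$ (where it agrees with the finite computation of Lemma \ref{vanish f even}) but for all $j\ge n$, and here the vanishing depends on $c_j=d_j=0$ for odd $j\ge n$ together with $k_m=0$ for even $m$. Once all relations are checked, $\tilde{f}$ is an endomorphism of $\wLa$. Since $\tilde{f}(e_i)\equiv e_i$ modulo $\mathfrak{m}$ while $\tilde{f}(a_i)\equiv a_i$ and $\tilde{f}(b_i)\equiv k_1b_i$ modulo $\mathfrak{m}^2$ with $k_1\in K^\times$, the induced map on the associated graded is an isomorphism (identity on $\wLa/\mathfrak{m}$ and invertible on the generating piece $\mathfrak{m}/\mathfrak{m}^2$); by $\mathfrak{m}$-adic completeness of $\wLa$ the endomorphism $\tilde{f}$ is therefore an automorphism. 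Finally, passing to the quotient $\wLa\to\La$ by the ideal $\langle e_0\rangle$ annihilates every term involving vertex $0$, that is the arrows $a_{-(n-1)},b_{-(n-1)}$, and the surviving formulas reduce exactly to those defining $f$; hence the square commutes and $\tilde{f}$ is the desired lift.
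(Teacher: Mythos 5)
Your proposal follows essentially the same route as the paper's proof: both reduce everything to checking the mesh relations of $\wLa$, invoking Lemma \ref{zero relation even} for the key relations $\tilde{f}(b_{\pm 1})\tilde{f}(a_{\pm 1})=0$ and their mirror images at $\pm(n-1)$, and disposing of the relations at the vertices $2,\dots,n-1$ by the identity $b_2a_2=(a_1+a_{-1})(b_1+b_{-1})$ and the power-series computation at interior vertices. Your extra remarks on convergence of $\tilde{f}_\infty$ in the $\mathfrak{m}$-adic completion and on invertibility via the associated graded correctly fill in two points the paper's proof leaves implicit, but they do not change the structure of the argument.
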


\begin{proof}
We will check the following relations

\[\left\{\begin{array}{lll}
\widetilde{f}(b_{\pm 1})\widetilde{f}(a_{\pm 1})=0 &&\textnormal{(a)}\\
 \widetilde{f}(a_{\pm(n-1)})\widetilde{f}(b_{\pm(n-1)})=0 
&&\textnormal{(b)}\\
\widetilde{f}(b_2) \widetilde{f}(a_2)=\widetilde{f}(a_1+a_{-1})\widetilde{f}(b_1+b_{-1}) &&\textnormal{(c)}\\
\widetilde{f}(a_{i-1})\widetilde{f}(b_{i-1})=\widetilde{f}(b_{i}) \widetilde{f}(a_{i})\ \ (3\leq i\leq n-2 ) &&\textnormal{(d)}\\
\widetilde{f}(a_{n-2})\widetilde{f}(b_{n-2}) =\widetilde{f}(b_{n-1}+b_{-(n-1)})\widetilde{f}(a_{n-1}+a_{-(n-1)}) &&\textnormal{(e)}
 \end{array}\right.\]

(a) This follows from Lemma \ref{zero relation even} and it also implies (b).

(c) We have 
\begin{eqnarray*}
\widetilde{f}(b_2) \widetilde{f}(a_2)&=
& (\sum^{n-2}_{j=1}k_jb_2(a_2b_2)^{j-1})a_2\\
&=& \sum^{n-2}_{j=1}k_j(b_2a_2)^{j}\\
&=& (a_1+a_{-1})(\sum^{n-2}_{j=1}k_j(b_1+b_{-1})(b_2a_2)^{j-1})\\
&=& \widetilde{f}(a_1+a_{-1})\widetilde{f}(b_1+b_{-1})
\end{eqnarray*}

and (d), (e) are similar.

Thus $\widetilde{f}$ gives a morphism of $\Aut(\wLa)$. 
The second statement is clear from the definition from $f$ and $\widetilde{f}$.
\end{proof}

\subsection{The case $\type{D}_{n+1}$ for odd.}

Finally we deal with the case $\type{D}_{n+1}$ when $n+1$ is odd, and complete the proof. This is shown by the same argument of $\type{D}_5$.

Assume that $n+1$ is odd and consider $\type{D}_{n+1}$ ($n\geq3$). 
Fix $f\in H$. 
Then we can write 
$f(e_1)=e_1+\sum^{n-1}_{j=1}c_j(b_1 [j])+\sum^{n-1}_{j=1}d_j(b_{-1}[j])$ and 
$f(e_{-1})=e_{-1}-\sum^{n-1}_{j=1}c_j(b_{1}[j])-\sum^{n-1}_{j=1}d_j(b_{-1}[j]),$ 
where  
$$-c_j=c_1d_{j-1}+c_2c_{j-2}+c_3d_{j-3}+c_4c_{j-4}+\cdots+c_{j-2}c_{2}+c_{j-1}d_{1}=d_{j}$$ 
for even $j$.

We consider the factorization of $f$ using $u,v\in\Aut(\La)$, which is defined as follows.  
First $u$ is defined by $u(e_1):=e_{1}+\sum^{n-2}_{j=1}c_j(b_{1}[j])+\sum^{n-2}_{j=1}d_j(b_{-1}[j])$ and $u(e_{-1}):=e_{-1}-\sum^{n-2}_{j=1}c_j(b_{1}[j])-\sum^{n-2}_{j=1}d_j(b_{-1}[j]),$ and
$u(e_i):=f(e_i)$, $u(a_i):=f(a_i)$ and $u(b_i):=f(b_i)$ for all $i\geq 2$ (i.e. $f=u$ if $c_{n-1}=0=d_{n-1}$).

Second $v$ is defined by $v(e_1):=e_1+c_{n-1}(b_{1}[n-1])+d_{n-1}(b_{-1}[n-1])$ and  $v(e_{-1}):=e_{-1}-c_{n-1}(b_{1}[n-1])-d_{n-1}(b_{-1}[n-1])$, and 
$v(e_i):=e_i$, $v(a_i):=a_i$ and $v(b_i):=b_i$ for $i\geq 2$. 
Then we can check that 
$f=v\circ u$. 

The following lemmas follow from the same argument of 
Lemma \ref{lift v} and \ref{lift u}.

\begin{lemma}\label{lift v n+1}
For $v$, we have $\tilde{v}\in\Aut(\wLa)$ such that
$$\xymatrix@C45pt@R15pt{
\wLa\ar[d]_{\rm nat.} \ar[r]^{\tilde{v}}   &\ar[d]^{\rm nat.}\wLa  \\
\La \ar[r]^{v}  &\La.}$$
\end{lemma}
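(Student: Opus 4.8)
The plan is to follow the proof of Lemma~\ref{lift v} \emph{mutatis mutandis}, using a rescaling automorphism to reduce the lifting of $v$ to the antisymmetric situation already settled in Proposition~\ref{prop D4}. If $c_{n-1}=0=d_{n-1}$ then $v=\id$ and $\tilde v=\id$ does the job, so I may assume $c_{n-1},d_{n-1}\neq 0$; the two mixed cases (exactly one of $c_{n-1},d_{n-1}$ zero) are then reduced to this one exactly as at the end of the proof of Lemma~\ref{lift v}.

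First I would introduce the rescaling automorphism $g\in\Aut(\La)$ given by $g(a_1)=(1/x)a_1$ and $g(b_1)=xb_1$ with $x:=(-c_{n-1}/d_{n-1})^{1/2}$, fixing every idempotent and every other arrow. Because $a_1$ and $b_1$ are rescaled by reciprocal scalars, each product $a_1b_1$ and $b_1a_1$, and hence every mesh relation of $\La$, is preserved, so $g$ is an automorphism; the same formulas (together with the identity on the two arrows $a_{-(n-1)},b_{-(n-1)}$ adjoined at the extra vertex $0$) define a lift $\tilde g\in\Aut(\wLa)$. The key point is the scaling behaviour of $g$ on the paths $(b_{\pm1}[n-1])$. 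Since $n+1$ is odd, $n-1$ is odd, and counting occurrences in the alternating word $(b_1[n-1])$ gives $n/2$ factors $b_1$ and $(n-2)/2$ factors $a_1$, whence $g^{-1}\big((b_1[n-1])\big)=(1/x)(b_1[n-1])$; by the symmetric role-swap $g^{-1}\big((b_{-1}[n-1])\big)=x(b_{-1}[n-1])$.

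Consequently $v_g:=g^{-1}\circ v\circ g$ satisfies $v_g(e_1)=e_1+c(b_1[n-1])+d(b_{-1}[n-1])$ and $v_g(e_{-1})=e_{-1}-c(b_1[n-1])-d(b_{-1}[n-1])$ with $c:=c_{n-1}/x$ and $d:=d_{n-1}x$, and the choice of $x$ forces $d=-c$, i.e. $c+d=0$. This antisymmetry is precisely the hypothesis under which the construction of Proposition~\ref{prop D4} applies: extending the coefficients to all levels $j\ge n$ by the Catalan recursion and propagating them through the added vertex as in ${\bf (i)',(ii)',(iii)'}$ yields $\tilde{v_g}\in\Aut(\wLa)$, where idempotency $\tilde{v_g}(e_{\pm1})^2=\tilde{v_g}(e_{\pm1})$ is supplied by the Catalan identity and the zero relations $\tilde{v_g}(b_{\pm1})\tilde{v_g}(a_{\pm1})=0$ by Lemma~\ref{zero relation even}. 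Setting $\tilde v:=\tilde g\circ\tilde{v_g}\circ\tilde g^{-1}$ then lifts $v=g\circ v_g\circ g^{-1}$ and makes the square of the lemma commute.

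I expect the main obstacle to be the bookkeeping of this last step: checking that the Proposition~\ref{prop D4} construction, designed in the low-rank cases, genuinely respects \emph{all} the preprojective relations of $\wLa$ for general odd $n+1$ — in particular that the Catalan recursion still produces the identities forcing $\tilde{v_g}(e_{\pm1})$ to be orthogonal idempotents and that Lemma~\ref{zero relation even} can be quoted verbatim in this generality. The scaling computation for $g$, by contrast, is a short arrow-count and should present no difficulty.
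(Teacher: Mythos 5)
Your proposal is correct and takes essentially the same route as the paper, whose entire proof of Lemma \ref{lift v n+1} is that it "follows from the same argument of Lemma \ref{lift v}": namely the rescaling conjugation $v_g=g^{-1}\circ v\circ g$ with $x=(-c_{n-1}/d_{n-1})^{1/2}$ to force the antisymmetry $c+d=0$, then the Proposition \ref{prop D4}-type construction (with Lemma \ref{zero relation even} supplying the zero relations), then conjugating the lift back by $\tilde g$. Your arrow count showing $g^{-1}\bigl((b_{1}[n-1])\bigr)=(1/x)(b_{1}[n-1])$ and $g^{-1}\bigl((b_{-1}[n-1])\bigr)=x(b_{-1}[n-1])$ for odd $n-1$ is exactly the (correct) verification needed to transport the $\type{D}_5$ argument to general odd $n+1$.
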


\begin{lemma}\label{lift u n+1}
For $u$, we have $\tilde{u}\in\Aut(\wLa)$ such that
$$\xymatrix@C45pt@R15pt{
\wLa\ar[d]_{\rm nat.} \ar[r]^{\tilde{u}}   &\ar[d]^{\rm nat.}\wLa  \\
\La \ar[r]^{u}  &\La}$$
by defining $\tilde{u}$ as follows $:$

$\bullet$ $\tilde{u}(e_{\pm1}),\tilde{u}(a_{\pm1})$ and $\tilde{u}(b_{\pm1})$ as {\bf (i)$',$(ii)$'$,(iii)$'$}.

$\bullet$ 
$\tilde{u}(e_{i})=e_i$  $(2\leq i\leq n-1)$, $\tilde{u}(a_{i})=a_i$ and $\tilde{u}(b_{i})=\sum^{n-3}_{j=1}k_jb_i(a_ib_i)^{j-1}$ $(2\leq i\leq n-2)$.

\begin{eqnarray*}
\bullet\ \tilde{u}(e_n):=e_n+\tilde{f}_{\infty}(a_{n-1},a_{-(n-1)}),\ \ \ 
\tilde{u}(e_{0}):=e_{0}-\tilde{f}_{\infty}(a_{n-1},a_{-(n-1)}),\\
\tilde{u}(b_{n-1}):=k_1(b_{n-1}+b_{-(n-1)})\tilde{u}(e_n),\ \ \ 
\tilde{u}(b_{-(n-1)}):=k_1(b_{n-1}+b_{-(n-1)})\tilde{u}(e_{0}),\\
\tilde{u}(a_{n-1}):=\tilde{u}(e_n)\sum^{n-3}_{j=1}k_j/k_1(a_{n-1}+a_{-(n-1)})(a_{n-2}b_{n-2})^{j-1},\\ 
\tilde{u}(a_{-(n-1)}):=\tilde{u}(e_{0})\sum^{n-3}_{j=1}k_j/k_1(a_{n-1}+a_{-(n-1)})(a_{n-2}b_{n-2})^{j-1}.
\end{eqnarray*}
\end{lemma}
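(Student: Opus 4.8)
The plan is to prove Lemma~\ref{lift u n+1} by checking that the assignment $\tilde{u}$ prescribed on the generators of $\wLa$ respects every preprojective relation of the affine quiver $\tilde{\type{D}}_{n+1}$, so that it extends to a well-defined continuous endomorphism of the $\mathfrak{m}$-adic completion $\wLa$; I would then verify that it is invertible and that the square commutes. Since the quiver of $\wLa$ has the leaf pairs $\{1,-1\}$ and $\{n,0\}$, branch vertices at $2$ and $n-1$, and an ordinary chain in between, the relations to be checked are exactly
\[\left\{\begin{array}{ll}
\widetilde{u}(b_{\pm 1})\widetilde{u}(a_{\pm 1})=0, & \widetilde{u}(a_{\pm(n-1)})\widetilde{u}(b_{\pm(n-1)})=0,\\
\widetilde{u}(b_2)\widetilde{u}(a_2)=\widetilde{u}(a_1+a_{-1})\widetilde{u}(b_1+b_{-1}),\\
\widetilde{u}(a_{i-1})\widetilde{u}(b_{i-1})=\widetilde{u}(b_i)\widetilde{u}(a_i)\quad(3\le i\le n-2),\\
\widetilde{u}(a_{n-2})\widetilde{u}(b_{n-2})=\widetilde{u}(b_{n-1}+b_{-(n-1)})\widetilde{u}(a_{n-1}+a_{-(n-1)}),
\end{array}\right.\]
which are the odd-case analogues of relations (a)--(e) verified for the even case, and whose pattern already appears in the $\type{D}_5$ proof of Lemma~\ref{lift u}.

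First I would record the constraints on the coefficients forced by $f\in\Aut(\La)$. Exactly as in Lemma~\ref{vanish f even}, expanding $f(b_{\pm1})f(a_{\pm1})=0$ and equating the coefficient $\Co(b_{\pm1}[j])$ to zero yields $k_m=0$ for every even $m$ and $c_{m-1}+d_{m-1}=0$ for every even $m$; equivalently $c_j+d_j=0$ for odd $j$ (and this also explains why the sums defining $\tilde{u}(b_i)$ stop at $n-3$, since $k_{n-2}=0$). These constraints are inherited by $u$ and, through the recursion in $\mathbf{(i)}'$ (with $c_j=d_j=0$ for odd $j$ and $-c_j=d_j$ the quadratic recursion for even $j$), by the infinite-series data defining $\tilde{u}$. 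The leaf relation $\widetilde{u}(b_{\pm1})\widetilde{u}(a_{\pm1})=0$ then follows by the identical coefficient bookkeeping as in Lemma~\ref{zero relation D_4} and Lemma~\ref{zero relation even}: in degrees $j\le n-1$ the coefficient $\Co(b_{\pm1}[j])$ agrees with the one already shown to vanish for $f$, and in degrees $j\ge n$ it vanishes because all odd-index $c_j,d_j$ and all even-index $k_m$ are zero. Because the coefficients of $\tilde{f}_\infty(a_{n-1},a_{-(n-1)})$ are \emph{defined} to equal those of $\tilde{f}_\infty(b_1,b_{-1})$, the relation at the opposite pair of leaves follows verbatim from this one.

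The relations (c)--(e) I expect to be routine: using $\widetilde{u}(e_1)+\widetilde{u}(e_{-1})=e_1+e_{-1}$ and $\widetilde{u}(e_n)+\widetilde{u}(e_0)=e_n+e_0$, each side collapses to $\sum_j k_j (b_i a_i)^{j}$ (with the branch substitution $b_2a_2=(a_1+a_{-1})(b_1+b_{-1})$ at vertex $2$, and its analogue at $n-1$), exactly as in the display verifying (c) in the even case. Convergence of the series $\tilde{f}_\infty$ holds because each term of $(b_{\pm1}[j])$ lies in $\mathfrak{m}^{j}$, so $\tilde{u}$ is a genuine endomorphism of the completion $\wLa$. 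That $\tilde{u}$ is an automorphism then follows from the standard criterion for complete path algebras: $\{\widetilde{u}(e_i)\}$ is again a complete orthogonal system of primitive idempotents, and on the arrow space $\mathfrak{m}/\mathfrak{m}^2$ the map $\tilde{u}$ is the rescaling $a_i\mapsto a_i$, $b_i\mapsto k_1 b_i$ with $k_1\in K^\times$, hence bijective; completeness upgrades this to invertibility. Commutativity of the square is immediate, since the natural surjection $\wLa\to\La$ kills $e_0$ and the arrows $a_{-(n-1)},b_{-(n-1)}$ and thereby collapses each series $\tilde{f}_\infty$ to the finite truncation appearing in $u$. The main obstacle will be relation (a): one must show $\Co(b_{\pm1}[j])$ vanishes \emph{for all} $j$, including the infinitely many degrees $j\ge n$ produced by the completion, and this is precisely where the parity pattern ($c_j=d_j=0$ for odd $j$, $k_m=0$ for even $m$) must be used uniformly. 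Once this single identity is in hand, the built-in symmetry of $\tilde{f}_\infty(a_{n-1},a_{-(n-1)})$ transports it to (b), and (c)--(e) are formal.
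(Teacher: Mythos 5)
Your proposal follows essentially the same route as the paper: the paper's proof of Lemma \ref{lift u n+1} is literally a reference to the arguments of Lemmas \ref{lift v} and \ref{lift u} (the $\type{D}_5$ case), and your checklist of relations (a)--(e), the use of coefficient constraints to kill the leaf relations via the bookkeeping of Lemmas \ref{zero relation D_4} and \ref{zero relation even}, the symmetry transporting (a) to (b), and the telescoping verification of (c)--(e) is exactly that argument written out for general odd $n+1$. You are in fact more careful than the paper about convergence of the series $\tilde f_\infty$ and about why $\tilde u$ is invertible rather than merely an algebra endomorphism.

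One inaccuracy should be corrected, though it does not break the proof of this particular lemma. You claim that expanding $f(b_{\pm1})f(a_{\pm1})=0$ forces $c_{m-1}+d_{m-1}=0$ for \emph{every} even $m$, ``equivalently $c_j+d_j=0$ for odd $j$''. In the present case $n$ is even, so the equations $\Co(b_{\pm1}[m])=0$ only exist for even $m\le n-2$ (for $m=n$ the path $(b_{\pm1}[n])$ is already zero in $\La$, so there is no equation to impose), and hence one only obtains $c_j+d_j=0$ for odd $j\le n-3$; the top coefficients $c_{n-1},d_{n-1}$ are genuinely unconstrained. This is precisely why the paper factors $f=v\circ u$ in the odd case and needs the rescaling trick of Lemma \ref{lift v n+1} to handle $v$: if your unqualified claim were true, that factorization would be unnecessary and the odd case would be no different from the even one. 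Your argument for the lemma itself survives because it concerns $u$, whose degree-$(n-1)$ coefficients vanish by construction, so the data feeding $\tilde f_\infty$ does satisfy the full parity pattern your verification of relation (a) requires, namely $c_j+d_j=0$ for all odd $j$ (indeed $c_j=d_j=0$ for odd $j\ge n-1$) and $k_m=0$ for all even $m$.
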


As a consequence of Lemma \ref{lift v n+1} and \ref{lift u n+1}, we obtain the conclusion in $\type{D}_{n+1}$ for odd $n+1$.\\


\textbf{Acknowledgements.} 

The author is grateful to Osamu Iyama and Ryoichi Kase for their kind advice and valuable suggestions. 
He also thanks Yuta Kozakai for many comments  on the manuscript, which improve the presentation, and Kota Yamaura for stimulating discussions.

The author was supported by Grant-in-Aid for JSPS Research Fellow 17J00652.

\end{document}